\DeclareMathAlphabet{\mathpzc}{OT1}{pzc}{m}{it}
\newtheorem{theorem}{Theorem}
\newtheorem{lemma}{Lemma}
\newtheorem{corollary}{Corollary}
\newtheorem{remark}{Remark}
\title{Monotonicity-Based Regularization for Shape Reconstruction in Linear Elasticity}
\author{Sarah Eberle\thanks{eberle@math.uni-frankfurt.de, Institute of Mathematics,
Goethe-University Frankfurt, Frankfurt am Main, Germany (corresponding author)}
\and Bastian Harrach\thanks{harrach@math.uni-frankfurt.de, Institute of Mathematics,
Goethe-University Frankfurt, Frankfurt am Main, Germany}}
\date{}
\begin{document}

\maketitle

\begin{abstract}
\noindent
{\color{black}
We deal with the shape reconstruction of inclusions in elastic bodies. For solving this inverse problem in practice, data fitting functionals are used. 
Those work better than the rigorous monotonicity methods from \cite{Eberle_mon_test}, but have no rigorously proven convergence theory. Therefore we show how the monotonicity
methods can be converted into a regularization method for a data-fitting functional without losing the convergence properties of the monotonicity methods.
This is a great advantage and a significant improvement over standard regularization techniques. 
In more detail, we introduce constraints on the minimization problem of the residual based on the monotonicity methods and prove the existence and
uniqueness of a minimizer as well as the convergence of the method for noisy data. In addition, we compare numerical reconstructions of inclusions 
based on the monotonicity-based regularization with a standard approach (one-step linearization with Tikhonov-like regularization), which also shows
the robustness of our method regarding noise in practice.
}
\end{abstract}

{\bf Keywords}:
 linear elasticity, inverse problem, shape reconstruction, one-step linearization method, monotonicity-based regularization

{\bf AMS subject classifications}: 35R30, 65M32

\section{Introduction} 

The main motivation is the non-destructive testing of elastic structures, such as is required for material examinations, 
in exploration geophysics, and for medical diagnostics (elastography). 
From a mathematical point of view, this constitutes an inverse problem since we have only measurement data on the boundary and
not inside of the elastic body.
This problem is highly ill-posed, since even the smallest measurement errors can completely falsify the result.
\\
\\
There are several authors who deal with the theory of the inverse problem of elasticity.
For the two dimensional case, we refer the reader to  \cite{ikehata1990inversion,nakamura1993identification,
imanuvilov2011reconstruction,
lin2017boundary}. In three dimensions, \cite{nakamura1995inverse, nakamura2003global} and 
\cite{eskin2002inverse}
gave the proof for uniqueness results for both Lam\'e coefficients under the assumption that $\mu$ 
is close to a positive constant. 
\cite{beretta2014lipschitz, beretta2014uniqueness} proved the uniqueness for partial data, where the
Lam\'e parameters are piecewise
constant and some boundary determination results were shown in \cite{nakamura1999layer,nakamura1995inverse,lin2017boundary}.
 \\
 \\
Further on, solution methods applied so far for the inverse problem, which will be solved in this paper,
were presented in the following works:
In \cite{Oberai_2004} and \cite{Oberai_2003}, the time-independent inverse problem of linear elasticity is
solved by means of the adjoint method and the reconstruction is simulated numerically. In addition, \cite{Seidl}
deals with the coupling of the state and adjoint equation and added 
two variants of residual-based stabilization
to solve the inverse linear elasticity problem for incompressible plane stress.
A boundary element-Landweber method for the Cauchy problem in  
stationary linear elasticity was investigated in \cite{Marin_2005}. 
In \cite{Hubmer}, the stationary inverse problem was solved by means of a Landweber iteration as well and numerical 
examples were presented. Reciprocity principles for the detection of cracks in elastic bodies were investigated, 
for example, in \cite{Andrieux} and \cite{Steinhorst} or more recently in \cite{Ferrier}. 
By means of a regularization approach, a stationary elastic inverse problem is solved in \cite{Jadamba} and applied in
numerical examples. \cite{Marin} introduces a regularized boundary element method. 
Finally, we want to mention the monotonicity methods for linear elasticity developed by the authors of this
paper in \cite{Eberle_mon_test} {\color{black} as well as its application for the reconstruction of inclusions based on
experimental data in \cite{Eberle_Experimental}}.
\noindent
\\
\\
{\color{black}
We want to point out that the reconstruction of the support of the Lam\'e parameters, also called shape in this paper, and not the reconstruction of their values is the topic of this work.} The key issue of the shape reconstruction of inclusions is the monotonicity 
property of the corresponding Neumann-to-Dirichlet operator 
(see \cite{Tamburrino06, tamburrino2002new}).
These monotonicity properties were also applied for the construction of monotonicity tests for electrical impedance 
\mbox{tomography} (EIT), e.g., in \cite{harrach2013monotonicity}, as well as the monotonicity-based regularization
in \cite{harrach2016enhancing}. In practice however, data fitting functionals provide better results than the 
monotonicity methods but {\color{black}the data-fitting functionals are usually not convex (see, e.g. \cite{Harrach_FEM}).
Even for exact data, therefore, it cannot generally be guaranteed that the algorithm does not erroneously deliver a local minimum. In addition, there is noise and ill-posedness. The local convergence theory of Newton-like methods requires non-linearity assumptions such as the tangential cone condition, which are still not proven even for simpler examples such as EIT. The convergence theory of Tikhonov-regularized data fitting functionals applies to their global minima, which in general cannot be found due to the non-convexity. Our method is based on the minimization of a convex functional and is to the knowledge of the authors the first rigorously convergent method for this problem, but only provides the shape of the inclusions.}
We combine the monotonicity methods 
(cf. \cite{Eberle_Monotonicity} and \cite{Eberle_mon_test})
with data fitting functionals to obtain convergence results and an improvement of both methods regarding stability
for noisy data.
{\color{black}
Here, we want to remark that compared to other data-fitting methods, we use the following a-priori assumptions: 
the Lam\'e parameters fulfill monotonicity relations, have a common support, the lower and upper bounds of the contrasts of the anomalies are known and we deal with a constant and known background material.
}
Compared with \cite{harrach2016enhancing}, we expand the approach used there from the consideration of only one parameter
to two parameters. 
\\
\\
The outline of the paper is as follows: 
We start with the introduction of the problem statement.
In order to detect and reconstruct inclusions in elastic bodies, we aim to determine the difference between an unknown
Lam\'e parameter pair $(\lambda,\mu)$ and that of the known background  $(\lambda_0,\mu_0)$ and formulate a minimization problem.
Similar to the linearized monotonicity tests in \cite{Eberle_mon_test}, we also consider the Fr\'echet derivative,
which approximates the difference between two Neumann-to-Dirichlet operators.
For solving the resulting minimization problem, we first take a look at a standard approach (standard one-step linearization method). 
Therefore regularization parameters are introduced, which can only be determined heuristically. For this purpose, for example, a parameter
study can be carried out. We would like to point out that this method is only a heuristic approach, but is commonly used in practice.
Overall, this heuristic approach leads to reconstructions of the unknown inclusions 
without a rigorous theory. 
In Section 4, we focus on the monotonicity-based regularization in order to enhance the data fitting functionals.
The idea of the regularization is to introduce conditions for the parameters / inclusions to
be reconstructed for the
minimization problem, which are based on the monotonicity properties of the Neumann-to-Dirichlet operator and 
the monotonicity tests. Further on, we prove that there exists a unique minimizer for this problem and 
that we obtain convergence even for noisy data. Finally, we compare numerical reconstructions of inclusions 
based on the monotonicity-based regularization with the one-step linearization, which also shows
the robustness of our method regarding noise in practice.

\section{Problem Statement}
We start with the introduction of the problems of interest, e.g., the {\it direct} as well as {\it inverse problem}
of stationary linear elasticity.
\\
\noindent
Let $\Omega\subset \mathbb{R}^d$ ($d=2$ or $3$) be a bounded and connected open set 
with Lipschitz boundary $\partial\Omega=\Gamma=\overline{\Gamma_{\mathrm{D}}\cup \Gamma_{\mathrm{N}}}$, $\Gamma_{\mathrm{D}}\cap \Gamma_{\mathrm{N}}=\emptyset$,
where $\Gamma_{\textup D}$ and $\Gamma_{\textup N}$ are the corresponding Dirichlet and Neumann boundaries.
{\color{black} We assume that  $\Gamma_{\textup D}$ and $\Gamma_{\textup N}$ are relatively open and connected.}
{\color{black}
For the following, we define
\begin{align*} 
L_+^\infty(\Omega):=\lbrace w \in L^\infty(\Omega):\underset{x\in\Omega}{\text{ess\,inf}}\,w(x)>0\rbrace.
\end{align*}
}
\noindent
Let $u:\Omega\to\mathbb{R}^d$ be the displacement vector, $\mu,\lambda:\Omega\to L^{\infty}_+(\Omega)$ the Lam\'{e} parameters, 
$\hat{\nabla} u=\tfrac{1}{2}\left(\nabla u + (\nabla u)^T\right)$ the symmetric gradient, $n$ is \mbox{the normal}
vector pointing outside of $\Omega$ , $g\in L^{2}(\Gamma_{\textup N})^d$ the boundary force and $I$ the $d\times d$-identity matrix.
We define the divergence of a matrix $A\in \mathbb{R}^{d\times d}$ via 
$\nabla\cdot A=\sum\limits_{i,j=1}^d\dfrac{\partial A_{ij}}{\partial x_j}e_i$, where $e_i$ is a unit vector
and $x_j$ a component of a vector from $\mathbb{R}^d$.
\\
The boundary value problem of linear elasticity ({\it direct problem}) is 
{\color{black} that $u\in H^1(\Omega)^d$ solves}
\begin{align}\label{direct_1}
\nabla\cdot \left(\lambda (\nabla\cdot u)I + 2\mu \hat{\nabla} u \right)&=0 \quad \mathrm{in}\,\,\Omega,\\
\left(\lambda (\nabla\cdot u)I + 2\mu \hat{\nabla} u \right) n&=g\quad \mathrm{on}\,\, \Gamma_{\textup N},\label{direct_2}\\
u&=0 \quad \mathrm{on}\,\, \Gamma_{\textup D}.\label{direct_3}
\end{align}
\noindent
From a physical point of view, this means that we deal with an elastic test body $\Omega$ which is fixed (zero displacement)
at $\Gamma_{\mathrm{D}}$ (Dirichlet condition) and apply a force $g$ on $\Gamma_{\mathrm{N}}$ (Neumann condition).
This results in the displacement $u$, which is measured on the boundary $\Gamma_{\mathrm{N}}$.
\\
\\
\noindent
The equivalent weak formulation of the boundary value problem (\ref{direct_1})-(\ref{direct_3})  is
that $u\in\mathcal{V}$ fulfills
\begin{align}
\label{var-direct_1}
\int_{\Omega} 2 \mu\, \hat{\nabla}u : \hat{\nabla}v  + \lambda \nabla \cdot u \,\nabla\cdot  v\,dx=\int_{\Gamma_{\textup N}}g \cdot v \,ds \quad \text{ for all } v\in \mathcal{V},
\end{align}
where
$\mathcal{V}:=\left\{   v\in H^1(\Omega)^d:\,  v_{|_{\Gamma_{\textup D}}}=0\right\}$.
\\
\\
We want to remark that for $\lambda,\mu \in L^{\infty}_+(\Omega)$ the existence and uniqueness of a solution to the variational formulation (\ref{var-direct_1}) can be shown by
the Lax-Milgram theorem (see e.g., in \cite{Ciarlet}). 
\\
\\
Measuring boundary displacements that result from applying forces to $\Gamma_{\textup{N}}$ can be modeled by the
Neumann-to-Dirichlet operator $\Lambda(\lambda,\mu)$ defined by
\begin{align*}
\Lambda(\lambda,\mu): L^2(\Gamma_{\textup N})^d\rightarrow L^2(\Gamma_{\textup N})^d: \quad  g\mapsto u_{|_{\Gamma_{\textup N}}},
\end{align*}
\noindent
where $u\in\mathcal{V}$ solves (\ref{direct_1})-(\ref{direct_3}).
\\
\\
This operator is self-adjoint, compact and linear
{\color{black} (see Corollary 1.1 from \cite{Eberle_mon_test}}).
Its associated bilinear form is given by
\begin{align}
\langle g,\Lambda(\lambda,\mu)h\rangle=\int_{\Omega} 2 \mu\, \hat{\nabla}u^g_{(\lambda,\mu)} : \hat{\nabla}u^h_{(\lambda,\mu)}  + 
\lambda \nabla \cdot u^g_{(\lambda,\mu)} \,\nabla\cdot  u^h_{(\lambda,\mu)}\,dx,\label{bilinear_Lambda}
\end{align}
\noindent
where $u_{(\lambda,\mu)}^g$ solves the problem (\ref{direct_1})-(\ref{direct_3}) and $u_{(\lambda,\mu)}^h$ 
the corresponding problem with boundary force $h\in L^2(\Gamma_{\mathrm{N}})^d$.
\noindent
\\
\\
Another important property of $\Lambda(\lambda,\mu)$ is its Fr\'echet differentiability (for the corresponding proof 
see {\color{black} Lemma 2.3 in} \cite{Eberle_mon_test}).
For directions $\hat{\lambda}, \hat{\mu}\in L^\infty(\Omega)$, the derivative
\begin{align*}
\Lambda'(\lambda,\mu)(\hat{\lambda},\hat{\mu}):  L^2(\Gamma_{\textup N})^d\rightarrow  L^2(\Gamma_{\textup N})^d
\end{align*}
is the self-adjoint compact linear operator associated to the bilinear form
\begin{align*}
\langle \Lambda'(\lambda,\mu)(\hat{\lambda},\hat{\mu})g,h\rangle
=&-\int_{\Omega} 2 \hat{\mu}\, \hat{\nabla}u^g_{(\lambda,\mu)} : \hat{\nabla}u^h_{(\lambda,\mu)}  + \hat{\lambda} \nabla \cdot u^g_{(\lambda,\mu)} \,\nabla\cdot  u^h_{(\lambda,\mu)}\,dx.
\end{align*}
Note that for  $\hat{\lambda}_0, \hat{\lambda}_1, \hat{\mu}_0, \hat{\mu}_1 \in L^\infty(\Omega)$ with
$\hat{\lambda}_0\leq \hat{\lambda}_1 \text{ and }  \hat{\mu}_0 \leq \hat{\mu}_1$
we obviously have
\begin{align}\label{mon_Frechet}
\Lambda'(\lambda,\mu)(\hat{\lambda}_0,\hat{\mu}_0)\geq \Lambda'(\lambda,\mu)(\hat{\lambda}_1,\hat{\mu}_1),
\end{align}
\noindent
in the sense of {\color{black} quadratic forms}.
\\
\\
The {\it inverse problem} we consider here is the following
{\color{black}
\begin{align*}
\text{ \it  Find the support of } (\lambda-\lambda_0,\mu-\mu_0)^T \text{ \it knowing the Neumann-to-Dirichlet operator  } \Lambda(\lambda,\mu).  
 \end{align*}
}
 \noindent
 Next, we take a look at the discrete setting. 
 Let the Neumann boundary $\Gamma_{\textup N}$ be the union of the patches $\Gamma_{\textup N}^{(l)}$, $l=1,...,M$, 
 {\color{black} which are assumed to be relatively open and connected},
 such that 
$\overline{\Gamma_{\textup N}}=\bigcup\limits_{l=1}^M\overline{\Gamma_{\textup N}^{(l)}}$, $\Gamma_{\textup N}^{(i)}\cap \Gamma_{\textup N}^{(j)}=\emptyset$ for $i\neq j$
 and we consider the following problem:
 \begin{align}\label{bvp_1}
\nabla\cdot \left(\lambda (\nabla\cdot u)I + 2\mu \hat{\nabla} u \right)&=0 \quad \,\,\mathrm{in}\,\,\Omega,\\
\left(\lambda (\nabla\cdot u)I + 2\mu \hat{\nabla} u \right) n&=g_l\quad \mathrm{on}\,\, \Gamma_{\textup N}^{(l)},\\
\left(\lambda (\nabla\cdot u)I + 2\mu \hat{\nabla} u \right) n&=0\quad \,\,\mathrm{on}\,\,\Gamma_{\textup N}^{(i)},\quad i\neq l,\\
u&=0 \quad \,\,\mathrm{on}\,\, \Gamma_{\textup D},\label{bvp_4}
\end{align}
 \noindent
 where $g_l$, $l=1,\ldots, M$, denote the $M$ given boundary forces applied to the corresponding
 patches $\Gamma_{\mathrm{N}}^{(l)}$.
\noindent
In order to discretize the {\color{black} Neumann-to-Dirichlet operator}, we apply a boundary force $ g_l$
on the patch $\Gamma_{\mathrm{N}}^{(l)}$ and set
 \begin{align*}
 \Lambda_l^{(k)}(\lambda,\mu):=\int_{\Gamma_{\textup N}^{(l)}}g_l\cdot u^{(k)} \,ds
 \end{align*}
 \noindent
 (cf. (\ref{var-direct_1}) and (\ref{bilinear_Lambda})),
 {\color{black}
  where $u^{(k)}$ solves the corresponding boundary value problem (\ref{bvp_1})-(\ref{bvp_4}) with boundary force $  g_{k}$.
 }  
 \\
 \\
{\color{black}
In Figure \ref{setting_g} a simple example of possible boundary loads $g_l$ and patches $\Gamma_{\mathrm{N}}^{(l)}$ 
is shown.
  \begin{figure}[H]
\begin{center}
  \includegraphics[width=0.35\textwidth]{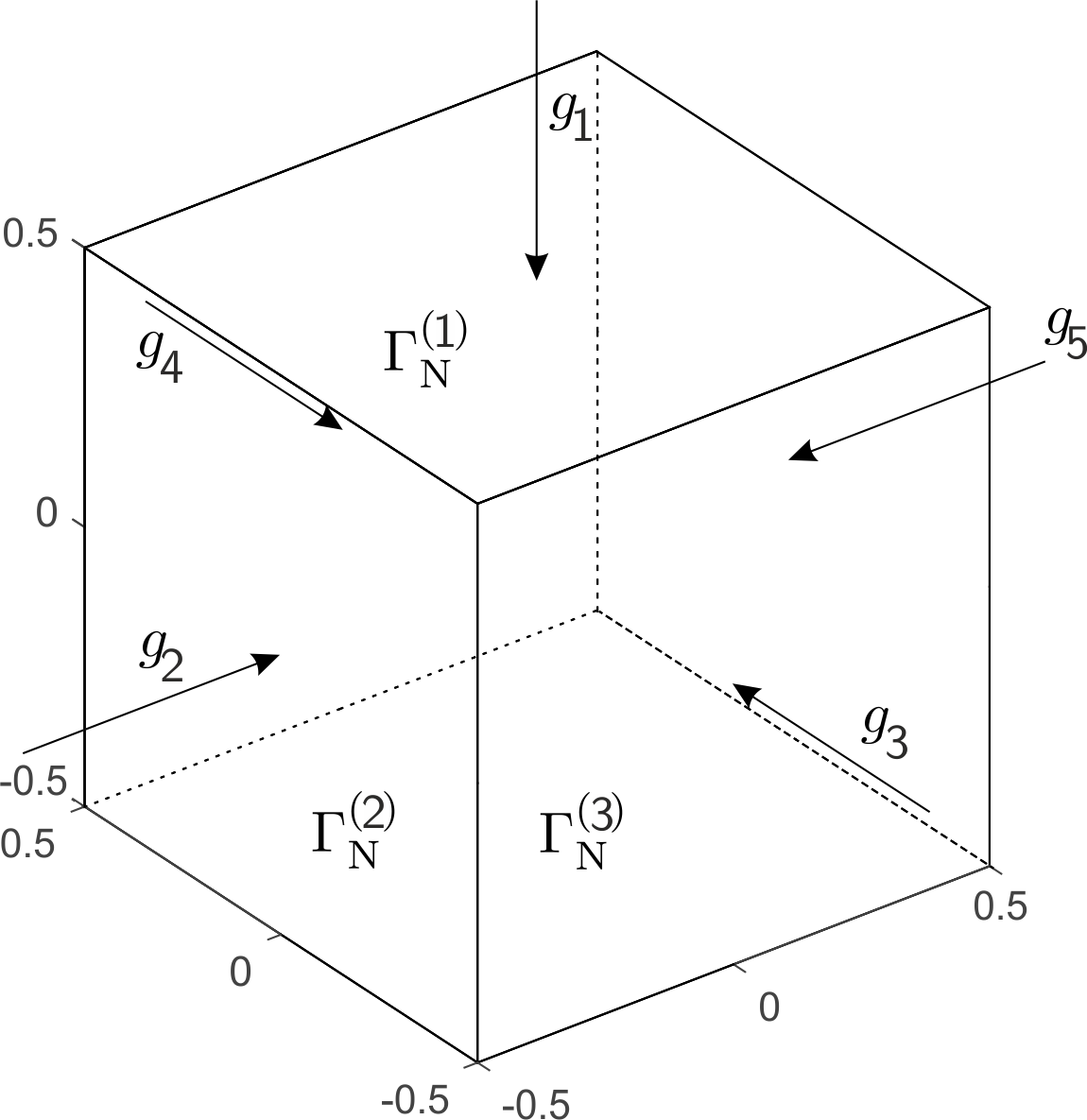}
 \caption{Illustration of possible boundary loads $g_l$ and patches $\Gamma_{\mathrm{N}}^{(l)}$. We consider here $l=1,\ldots,5,$ Neumann patches and one Dirichlet patch (bottom of the cube). The boundary forces $g_l$ are normal vectors from the Euclidean space in each point of the patch.}\label{setting_g}
 \end{center}
  \end{figure}

For the Neumann boundary forces as described here, we get an orthogonal system $g_l$ in $L^2(\Gamma_N)^d$. In practice, we additionally normalize the system $g_l$ and use more patches $\Gamma_{\mathrm{N}}^{(l)}$.
}
 \noindent
 \\
 \\
 For the unknown Lam\'e parameters $(\lambda,\mu)$, we obtain a full matrix 
 \begin{align*}
 \mathbf{\Lambda}(\lambda,\mu)=\left(\Lambda^{(k)}_l(\lambda,\mu)\right)_{k,l=1,...,M}.
 \end{align*}

 \section{Standard One-step Linearization Methods}
 In this section we take a look at {\color{black} one-step} linearization methods. We want to remark that these methods are only a heuristical
 approach but {\color{black} commonly used in practice}.
 \\
 \\
 We compare the matrix of the discretized Neumann-to-Dirichlet operator $\mathbf{\Lambda}(\lambda,\mu)$ with $\mathbf{\Lambda}(\lambda_0,\mu_0)$ for some reference 
 Lam\'e parameter $(\lambda_0,\mu_0)$ in order to reconstruct the 
 difference $(\lambda,\mu)-(\lambda_0,\mu_0)$. Thus, we apply a single linearization step
 \begin{align*}
 \mathbf{\Lambda}^\prime(\lambda_0,\mu_0)\left((\lambda,\mu)-(\lambda_0,\mu_0)\right)\approx
 \mathbf{\Lambda}(\lambda,\mu)-\mathbf{\Lambda}(\lambda_0,\mu_0),
 \end{align*}
 \noindent
 where 
 \begin{align*}
 \mathbf{\Lambda}^\prime(\lambda_0,\mu_0): L^{\infty}(\Omega)^2\to \mathbb{R}^{M\times M}
 \end{align*}
 \noindent
 {\color{black}
 is the Fr\'echet derivative which maps $(\hat{\lambda}, \hat{\mu})\in L^\infty(\Omega)^2$ to
 \begin{align*}
-\left(\int_{\Omega}\hat{\lambda}\left(\nabla\cdot u^{(k)}_{(\lambda_0,\mu_0)}\right) \left(\nabla\cdot u^{(l)}_{(\lambda_0,\mu_0)}\right)
+{\color{black}2}\hat{\mu}\left(\hat{\nabla} u^{(k)}_{(\lambda_0,\mu_0)}\right): \left(\hat{\nabla} u^{(l)}_{(\lambda_0,\mu_0)}\right)dx\right)_{1\leq k, l\leq M} .
 \end{align*}
 }
 \\
   {\color{black} For the solution of the problem,}
  we discretize the reference domain $\overline{\Omega}=\bigcup\limits_{j=1}^{p}\overline{\mathcal{B}}_j$ into $p$ disjoint pixel $\mathcal{B}_j$, 
  where each $\mathcal{B}_j$ is assumed to be open, $\Omega\setminus \mathcal{B}_j$ is connected and 
  $\mathcal{B}_j \cap \mathcal{B}_i=\emptyset$ for $j\neq i$.
  {\color{black} We make a piecewise constant ansatz for $(\kappa,\nu)\approx(\lambda,\mu)-(\lambda_0,\mu_0)$} via
 \begin{align}\label{nu_kappa}
 \kappa(x)=\sum_{j=1}^p \kappa_j \chi_{\mathcal{B}_j}(x)\quad\text{and}\quad 
 \nu(x)=\sum_{j=1}^p \nu_j \chi_{\mathcal{B}_j}(x),
 \end{align} 
 \noindent
 where $\chi_{\mathcal{B}_j}$ is the characteristic function w.r.t. the pixel $\mathcal{B}_j$ and set
 \begin{align*}
   {\boldsymbol \kappa}=(\kappa_j)_{j=1}^p\in \mathbb{R}^p \quad\text{and}\quad
 {\boldsymbol \nu}=(\nu_j)_{j=1}^p\in \mathbb{R}^p.
 \end{align*}
 \noindent
 This approach leads to the linear equation system
 \begin{align}\label{equation_min}
  {\bf S}^{\lambda}{\boldsymbol \kappa}+ {\bf S}^{\mu}{\boldsymbol \nu}= {\bf V},
 \end{align}
 \noindent
 where $\bf V$ and the columns of the sensitivity matrices $\bf S^{\lambda}$ and $\bf S^{\mu}$ contain the entries of 
 {\color{black}$\Lambda(\lambda_0,\mu_0)-\Lambda(\lambda,\mu)$} and the discretized Fr\'echet derivative for a given $\mathcal{B}_j$ for $j=1,...,p$, respectively.
 \noindent
Here, we have
 \begin{align}
 \bf V&=(V_i)_{i=1}^{M^2}\in \mathbb{R}^{M^2}, 
\quad\,{\color{black} V_{(l-1)M+k}=\Lambda_l^{(k)}(\lambda_0,\mu_0)-\Lambda_l^{(k)}(\lambda,\mu)},\\
 \bf S^{\lambda}&=(S^{\lambda}_{ij})\in\mathbb{R}^{M^2,p},
 \quad S^{\lambda}_{(l-1)M+k,j}=\int_{\mathcal{B}_j}\left(\nabla\cdot u^{(k)}_{(\lambda_0,\mu_0)}\right) \left(\nabla\cdot u^{(l)}_{(\lambda_0,\mu_0)}\right)dx,\label{S_lam}\\
 \bf S^{\mu}&=(S^{\mu}_{ij})\in\mathbb{R}^{M^2,p},
 \quad S^{\mu}_{(l-1)M+k,j}=\int_{\mathcal{B}_j}2\left(\hat{\nabla}u_{(\lambda_0,\mu_0)}^{(k)}\right):\left(\hat{\nabla}u_{(\lambda_0,\mu_0)}^{(l)}\right)dx.\label{S_m}
 \end{align}
 \noindent
Solving (\ref{equation_min}) results in a standard minimization problem for the reconstruction of the unknown parameters.
 In order to determine suitable parameters {\color{black}$(\boldsymbol\kappa,\boldsymbol\nu)$},
 we regularize the minimization problem, so that we have
 \begin{align}\label{min_stand}
 \left\Vert \left(\bf S^{\lambda}\,\vert\,\, \bf S^{\mu}\right) 
 \begin{pmatrix}
 \boldsymbol\kappa\\
 \boldsymbol\nu
 \end{pmatrix} 
 - \bf V \right\Vert_{2}^2 + \omega \Vert \boldsymbol \kappa\Vert_{2}^2 + {\color{black}\sigma}\Vert\boldsymbol\nu\Vert_{2}^2\to \mathrm{min!}
 \end{align}
 \noindent
 with $\omega$ and ${\color{black}\sigma}$ as regularization parameters.
For solving this minimization problem we consider the normal equation
 \begin{align*}
 {\bf A}^T {\bf A} 
 \begin{pmatrix}
 \boldsymbol\kappa\\
 \boldsymbol\nu
 \end{pmatrix}
 ={\bf A}^T 
 \begin{pmatrix}
 \bf V\\
 \bf 0\\
 \bf 0
 \end{pmatrix}
 \end{align*}
 \noindent
 with 
$
 \bf A=
 \begin{pmatrix}
 \bf S^{\lambda} & \vert & \bf S^{\mu}\\
 \omega \bf I&\vert & \bf 0 \\
 \bf 0 & \vert & {\color{black}\sigma} \bf I
  \end{pmatrix}.
$
 \noindent
 \\
 \\
Obtaining a solution for this system is memory expensive and finding two suitable parameters $\omega$ and ${\color{black}\sigma}$ can be time consuming,
 since we can only choose them heuristically.
 However, the parameter reconstruction provides good results as shown in the next part.

 \subsection*{Numerical Realization}
 
 We present a simple test model, where we consider a cube of a biological tissue with two inclusions (tumors) as depicted in Figure \ref{standard}.

 \begin{figure}[H]
 \begin{center}
 \includegraphics[width=0.4\textwidth]{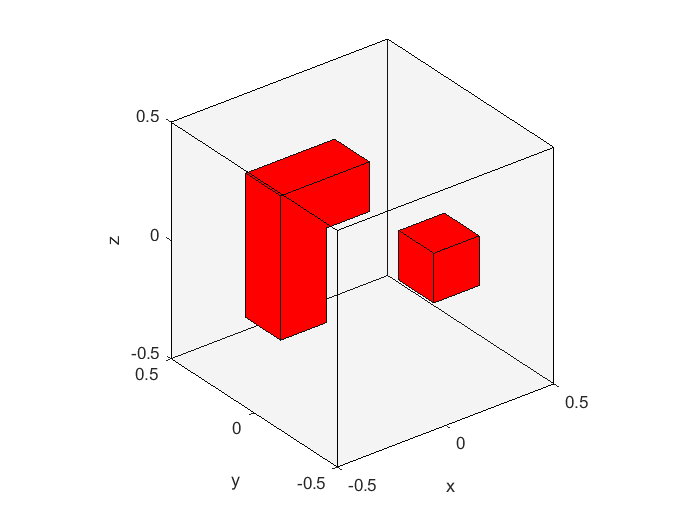}
\caption{Cube with two inclusions (red).}\label{standard}
\end{center}
 \end{figure}
 \noindent
 The Lam\'e parameters of the corresponding materials are given in Table \ref{lame_parameter_mono}.
 \begin{table} [H]
 \begin{center}
 \begin{tabular}{ |c|c| c |}  
\hline
 material & $\lambda_i$ & $\mu_i$ \\
  \hline
$i=0$: tissue &  $6.6211\cdot 10^5$   &  $6.6892\cdot 10^3$   \\
 \hline
$i=1$: tumor &  $2.3177\cdot 10^6$ &  $2.3411\cdot 10^4$  \\
\hline
\end{tabular}
\end{center}
\caption{Lam\'e parameter of the test material in [Pa].}
\label{lame_parameter_mono}
\end{table}
\noindent
For our numerical experiments, we simulate the discrete measurements by solving
\begin{equation}\label{monotonicity_test_1}
\left\{ \begin{array}{rcll}
\nabla\cdot \left(\lambda_0 (\nabla\cdot u_0)I + 2\mu_0 \hat{\nabla} u_0 \right)&=&0&\mathrm{in}\,\,\Omega,\\[1.5ex]
-\nabla\cdot \left(((\lambda_1-\lambda_0)\chi_D)(\nabla\cdot u_0)I+ 2((\mu_1-\mu_0)\chi_D)\hat{\nabla} u_0 \right)\\
+\nabla\cdot \left(\lambda (\nabla\cdot v)I + 2\mu \hat{\nabla} v \right)&=&0 &\mathrm{in}\,\,\Omega,\\ [1.5ex]
\left(\lambda_0 (\nabla\cdot u_0)I + 2\mu_0 \hat{\nabla} u_0 \right) n&=&g_l&\mathrm{on}\,\, \Gamma_{\mathrm{N}},\\ [1.5ex]
\left(\lambda (\nabla\cdot v)I + 2\mu \hat{\nabla} v \right) n&=&0&\mathrm{on}\,\, \Gamma_{\mathrm{N}},\\ [1.5ex]
u_0&=&0 &\mathrm{on}\,\, \Gamma_{\mathrm{D}},\\ [1.5ex]
v&=&0 &\mathrm{on}\,\, \Gamma_{\mathrm{D}},
\end{array}\right.
\end{equation}
\noindent
\\
for each of the $l=1,\ldots,M$, given boundary forces $ g_l$, where $v:=u_0-u$ are the difference measurements. 
The equations regarding $v$ in the system 
(\ref{monotonicity_test_1}) 
result from substracting the boundary value problem (\ref{direct_1}) for the respective Lam\'{e} parameters.

{\color{black}
We want to remark that the Dirichlet boundary is set to the bottom of the cube. The remaining five faces of the cube constitute the Neumann boundary.
Each Neumann face is divided into $25$ squares of equal size ($5\times 5$) resulting in $125$ patches $\Gamma_{\mathrm{N}}^{(l)}$. 
On each $\Gamma_{\mathrm{N}}^{(l)}$, $l=1,\ldots,125,$
we apply a boundary force $g_l$, which is equally distributed on $\Gamma_{\mathrm{N}}^{(l)}$ and pointing in the normal direction of the patch.
}

\subsubsection*{Exact Data}
First of all, we take a look at the example without noise, which means we assume we are given exact data.
 \\
 \\
 In order to obtain a suitable visualization of the $3$D reconstruction, we manipulate the transparency parameter function $\alpha: \mathbb{R} \to [0,1]$
 of Figure \ref{one_step_suitable_3d} as exemplary depicted for the Lam\'e parameter $\mu$ in Figure \ref{trans_color_lin}.
 It should be noted that a low transparency parameter indicates that the corresponding color (here, the colors around zero)
 are plotted with high transparency, while a high $\alpha$ indicates that the corresponding color is plotted opaque.
 The reason for this choice is that values of the calculated difference $\kappa=\mu_1-\mu_0$ close to zero are not an
 indication of an inclusion, while values with a higher absolute value indicate an inclusion. Hence, this choice of
 transparency is suitable to plot the calculated inclusions without being covered by white tetrahedrons with values close
 to zero. Further, the reader should observe that $\alpha(\kappa)>0$ for all values of $\kappa$, so that all tetrahedrons are plotted and that
 the transparency plot for $\nu$ takes the same shape but is adjusted to the range of the calculated values.
\\
 \\
  The following results (Figure \ref{one_step_suitable_3d} - Figure \ref{one_step_suitable_cuts_mu}) are based on a parameter search and
 the regularization parameters are chosen heuristically. 
 Thus, we only present the results with the best parameter choice ($\omega=10^{-17}$ and ${\color{black}\sigma}=10^{-13}$) and 
 reconstruct the difference in the Lam\'{e} parameters 
 $\mu$ and $\lambda$.  
   \begin{figure}[H]
 \begin{center}
  \includegraphics[width=0.35\textwidth]{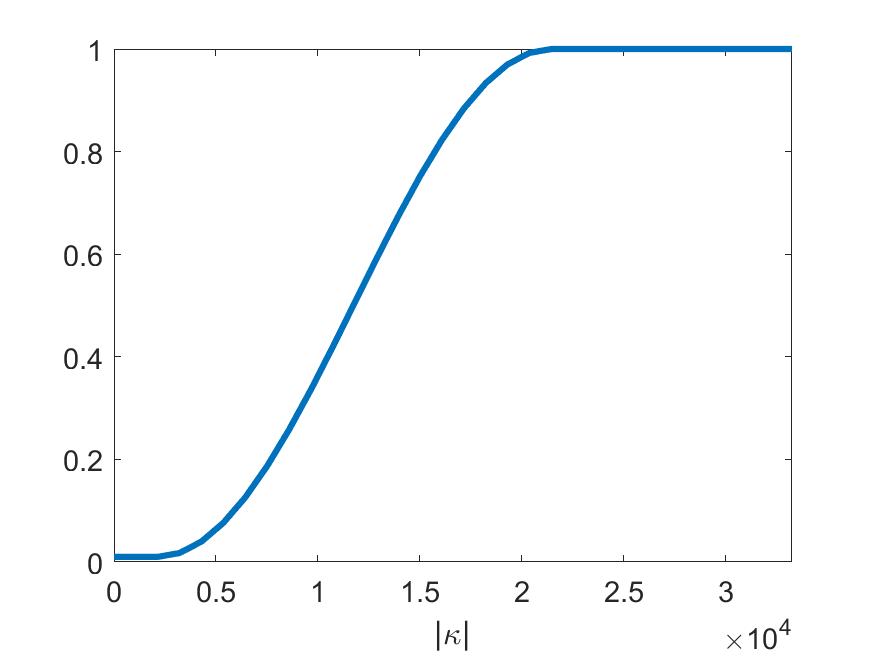}
 \caption{Transparency function for the plots in Figure \ref{one_step_suitable_3d} mapping the values of $\kappa$ to $\alpha(\kappa)$.}\label{trans_color_lin}
 \end{center}
  \end{figure}
  
  \begin{figure}[H]
\begin{center}
  \includegraphics[width=0.85\textwidth]{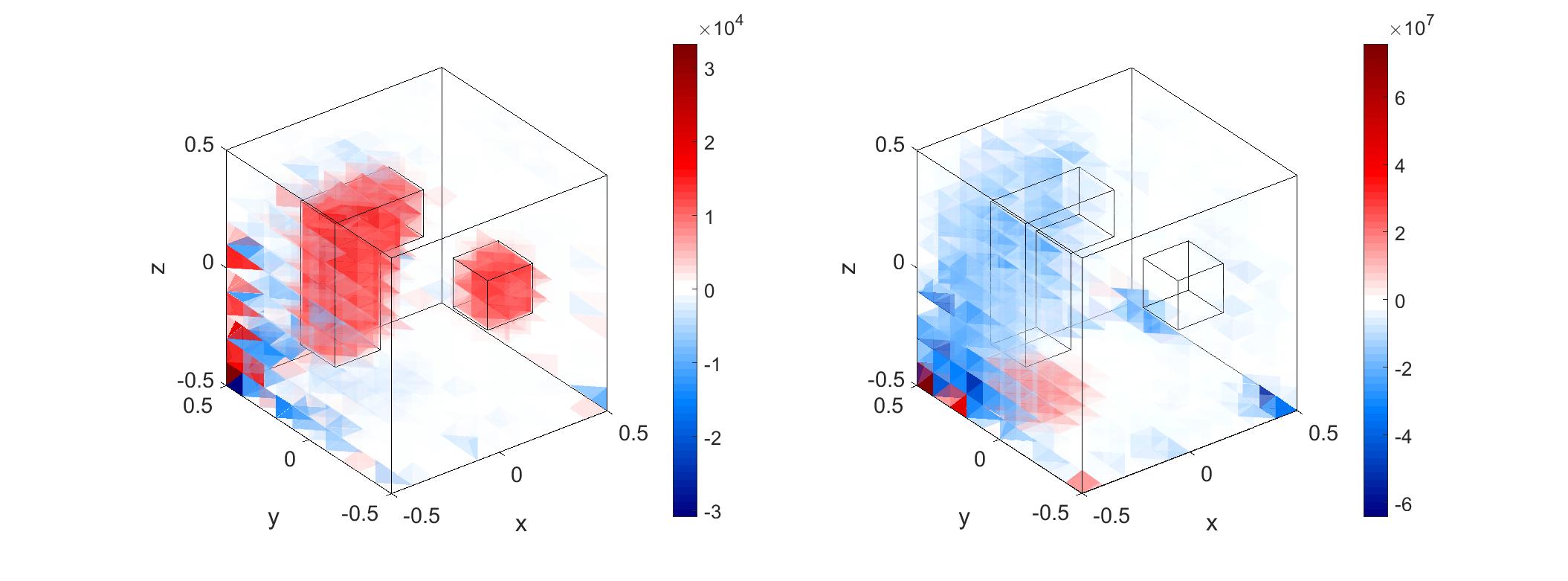}
 \caption{Shape reconstruction of two inclusions of the difference in the Lam\'{e} parameter $\mu$ (left hand side) and $\lambda$ (right hand side) for the regularization parameters
 \mbox{$\omega=10^{-17}$} and ${\color{black}\sigma}=10^{-13}$ without noise and transparency function $\alpha$ as shown in
 Figure \ref{trans_color_lin}.}\label{one_step_suitable_3d}
 \end{center}
  \end{figure}

  \begin{figure}[H]
  \begin{center}
  \includegraphics[width=0.32\textwidth]{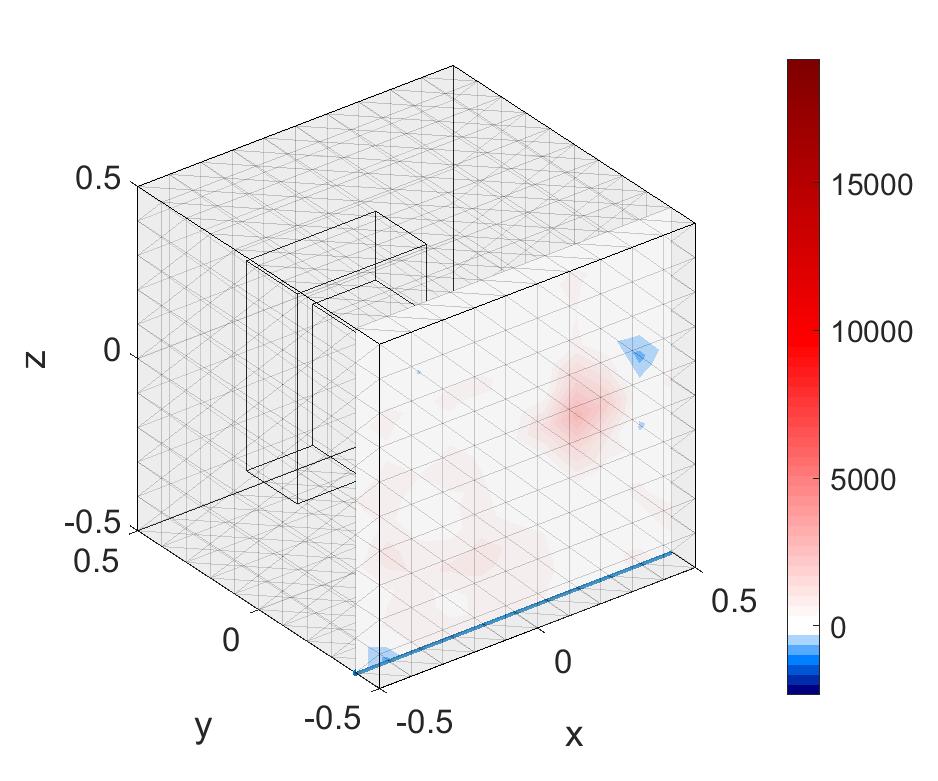}
  \includegraphics[width=0.32\textwidth]{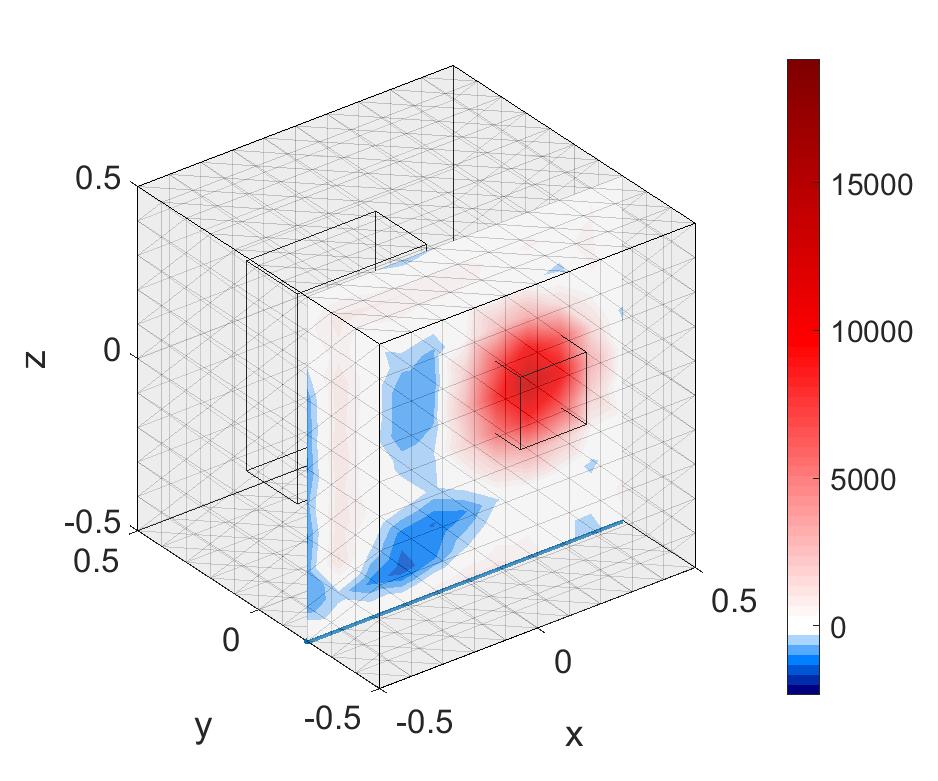}\\
   \includegraphics[width=0.32\textwidth]{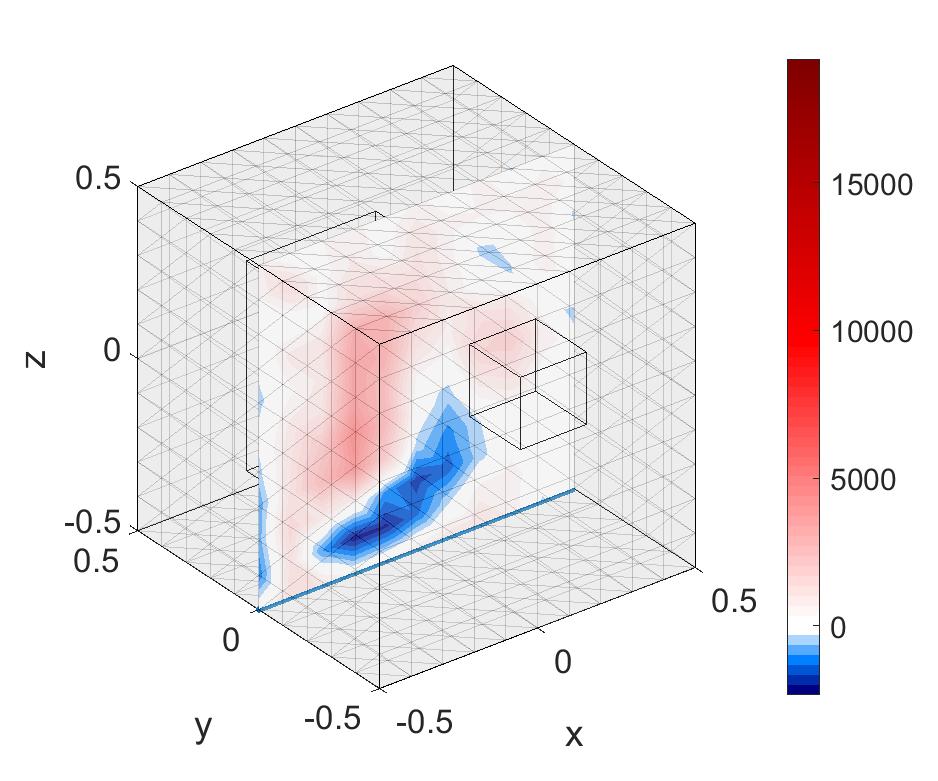}
   \includegraphics[width=0.32\textwidth]{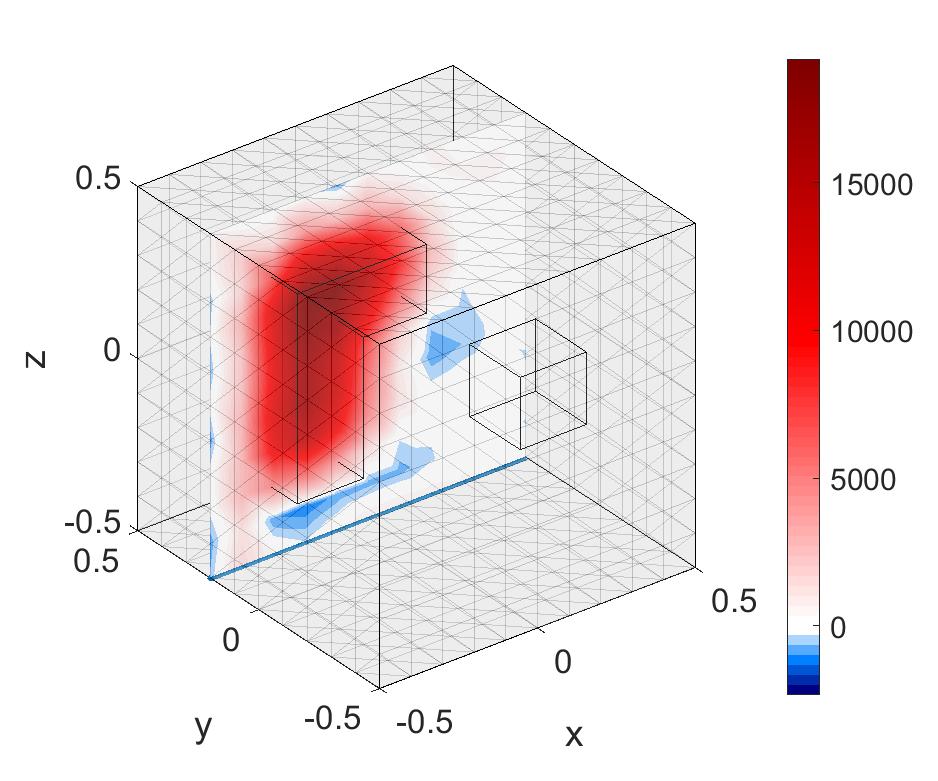}
  \includegraphics[width=0.32\textwidth]{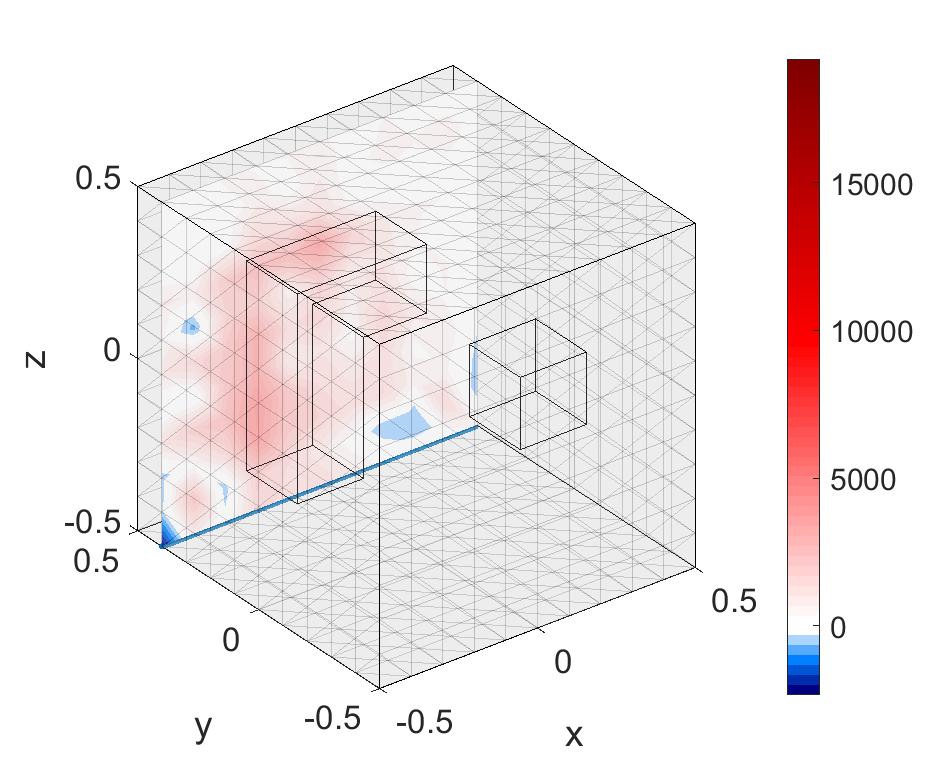}
 \caption{Shape reconstruction of two inclusions of the reconstructed difference in the Lam\'{e} parameter $\mu$ for the regularization parameters
 $\omega=10^{-17}$ and ${\color{black}\sigma}=10^{-13}$ depicted as cuts without noise.}\label{one_step_suitable_cuts_mu}
 \end{center}
  \end{figure}

\noindent
 With these regularization parameters, the two inclusions are detected and reconstructed correctly for $\mu$ 
 (see Figure \ref{one_step_suitable_3d} in the left hand side)
 and the value of $\mu-\mu_0$ is in the correct amplitude range as depicted in Figure \ref{one_step_suitable_cuts_mu}.
Figure \ref{one_step_suitable_3d} shows us, that for $\lambda-\lambda_0$, the reconstruction 
does not work. 
The reason is that the range of the  Lam\'{e} parameters differs from each other around $10^2$ Pa ($\lambda\approx100\cdot\mu$), but
\begin{align*}
\Vert {\bf S}^\mu\Vert_2\approx 1.2\cdot 10^4 \Vert {\bf S}^\lambda\Vert_2, 
\end{align*}
\noindent
i.e. the signatures of $\mu$
are represented far stronger in the calculation of $\bf V$ than those of $\lambda$.

\subsubsection*{Noisy Data}

Next, we go over to noisy data. We assume that we are given a noise level $\eta\geq 0$ and set

\begin{align}\label{def_delta}
\delta=\eta \cdot ||\boldsymbol{\Lambda}(\lambda,\mu)||_F.
\end{align}
\noindent
Further, we define $\boldsymbol{\Lambda}^\delta(\lambda,\mu)$ as
\begin{align}\label{def_Lambda_noise}
\boldsymbol{\Lambda}^\delta(\lambda,\mu)=\boldsymbol{\Lambda}(\lambda,\mu)+\delta \overline{{\bf E}},
\end{align}
\noindent
with $\overline{{\bf E}}={\bf E}/||{\bf E}||_F$, where ${\bf E}$ consists of $M\times M$ random 
{\color{black} uniformly distributed}
values in $[-1,1]$.
\noindent
{\color{black}
We set 
\begin{align*}
{\bf  V}^\delta=\boldsymbol{\Lambda}(\lambda_0,\mu_0)-\boldsymbol{\Lambda}^\delta(\lambda,\mu).
\end{align*}
}
\noindent
Hence, we have 
\begin{align*}
 \Vert{\bf  V}^\delta - {\bf V}\Vert \leq \delta.
\end{align*}
\noindent
In the following examples, we consider relative noise levels of $\eta=1\%$  (Figure \ref{result_one_step_noise} ) and 
$\eta=10\%$ (Figure \ref{one_step_suitable_3d_noise} and \ref{one_step_suitable_cuts_mu_noise}) with respect to
the Frobenius norm as given in (\ref{def_Lambda_noise}), where
the regularization parameters are chosen heuristically and given in the caption of the figure.
\noindent
\\
\noindent
In Figure \ref{result_one_step_noise}, we observe that for a low noise level with $\eta=1\%$, we obtain
a suitable reconstruction of the inclusion concerning the Lam\'e parameter $\mu$ and the reconstruction 
of $\lambda$ fails again.
\\
\\
    \begin{figure}[H]
  \begin{center}
  \includegraphics[width=0.85\textwidth]{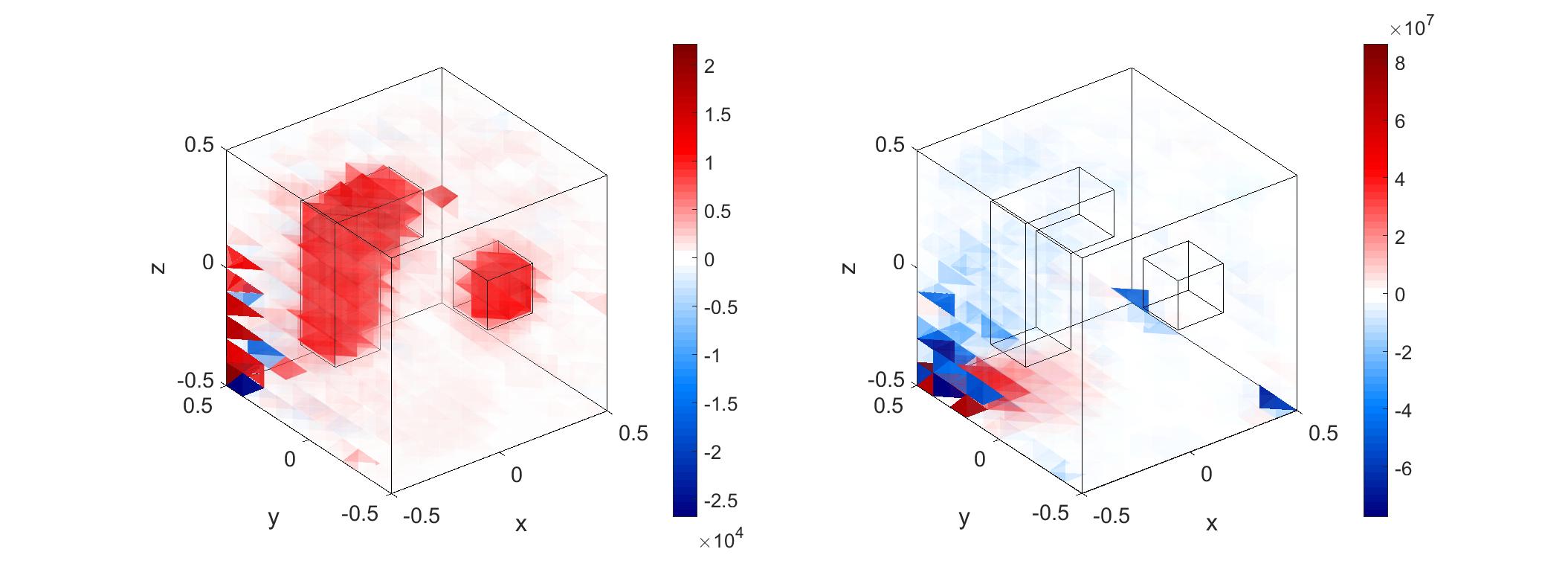}
 \caption{Shape reconstruction of two inclusions of the difference in the Lam\'{e} parameter $\mu$ (left hand side) and $\lambda$ 
 (right hand side) for the regularization parameters
 $\omega=1.1\cdot 10^{-17}$ and ${\color{black}\sigma}=1.1\cdot 10^{-13}$ with relative noise $\eta=1\%$
 and transparency function $\alpha$ as shown in Figure \ref{trans_color_lin}.}\label{result_one_step_noise}
\end{center}
  \end{figure}
  
  \begin{figure}[H]
  \begin{center}
  \includegraphics[width=0.32\textwidth]{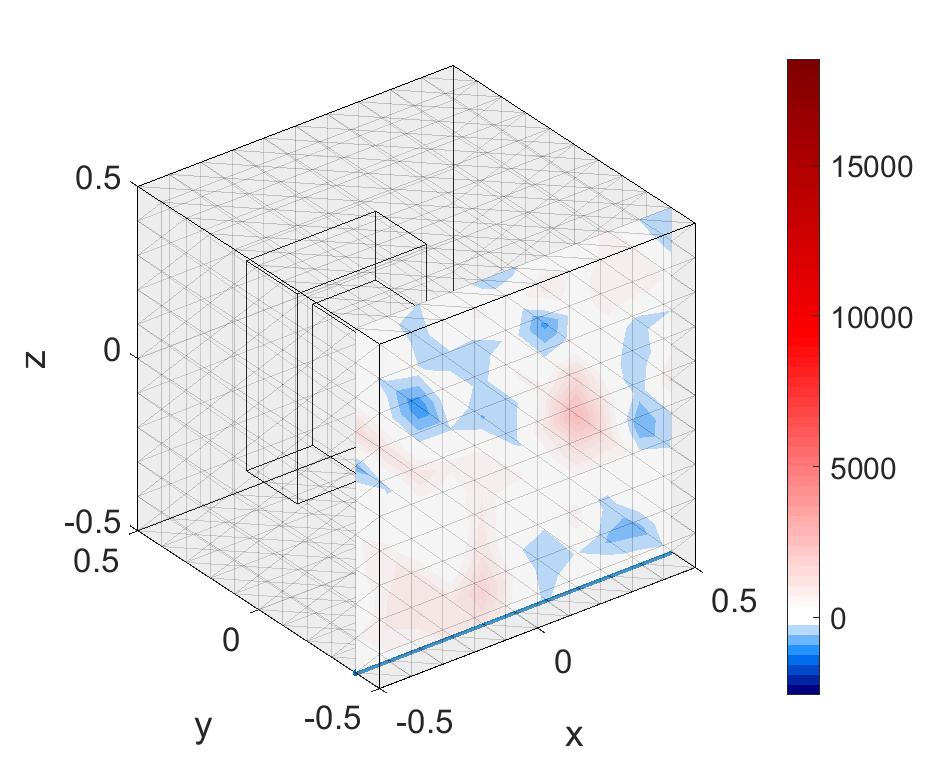}
  \includegraphics[width=0.32\textwidth]{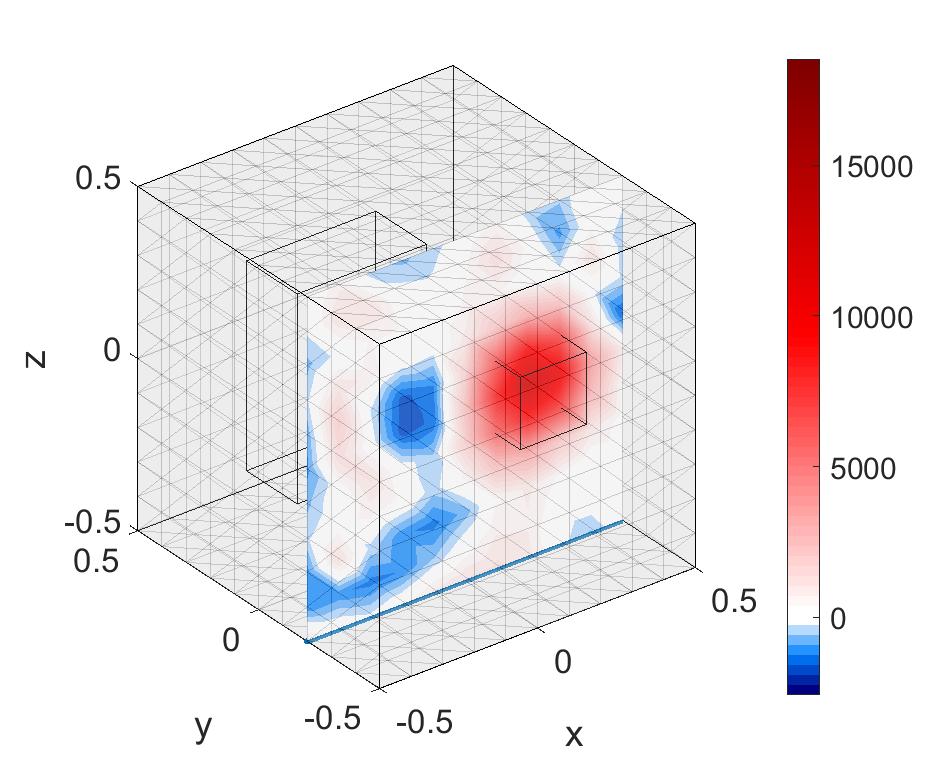}\\
  \includegraphics[width=0.32\textwidth]{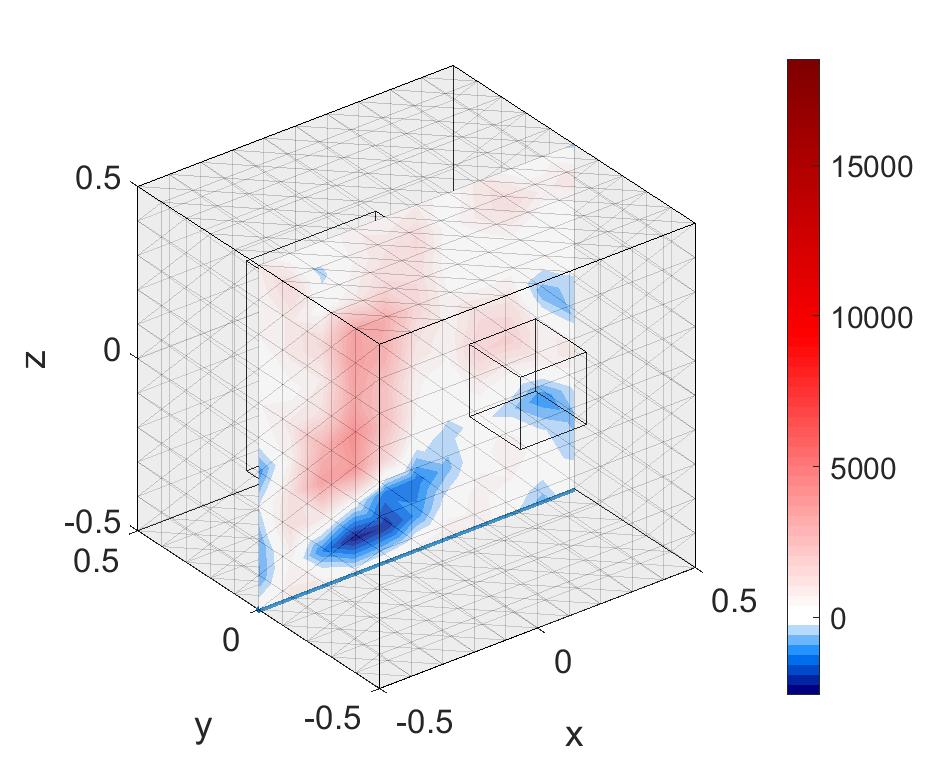}
\includegraphics[width=0.32\textwidth]{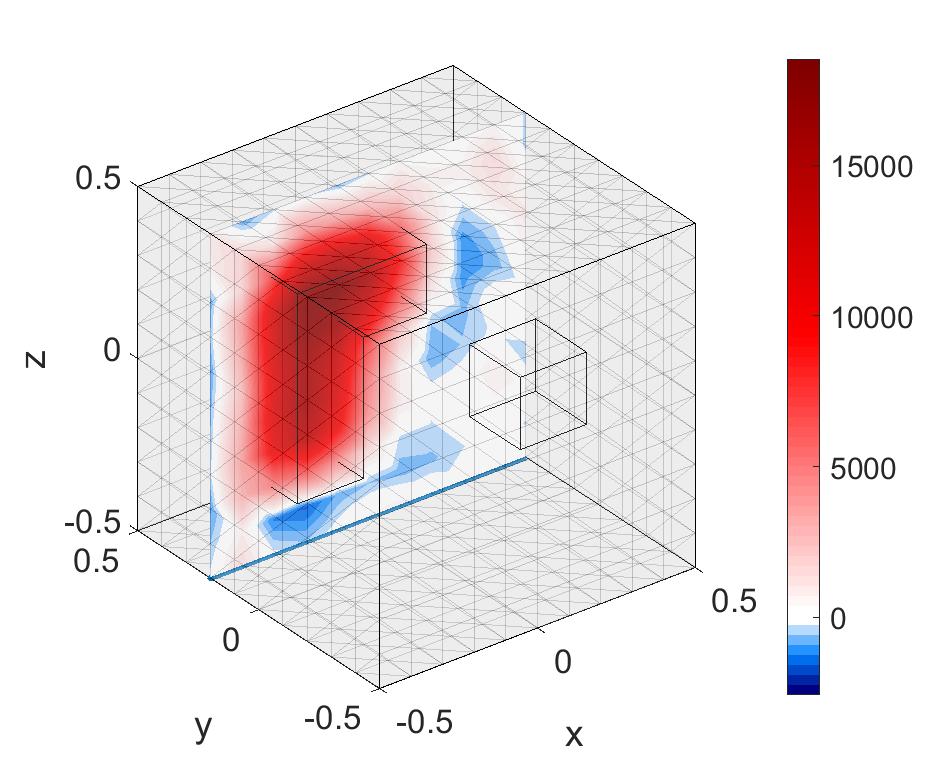}
  \includegraphics[width=0.32\textwidth]{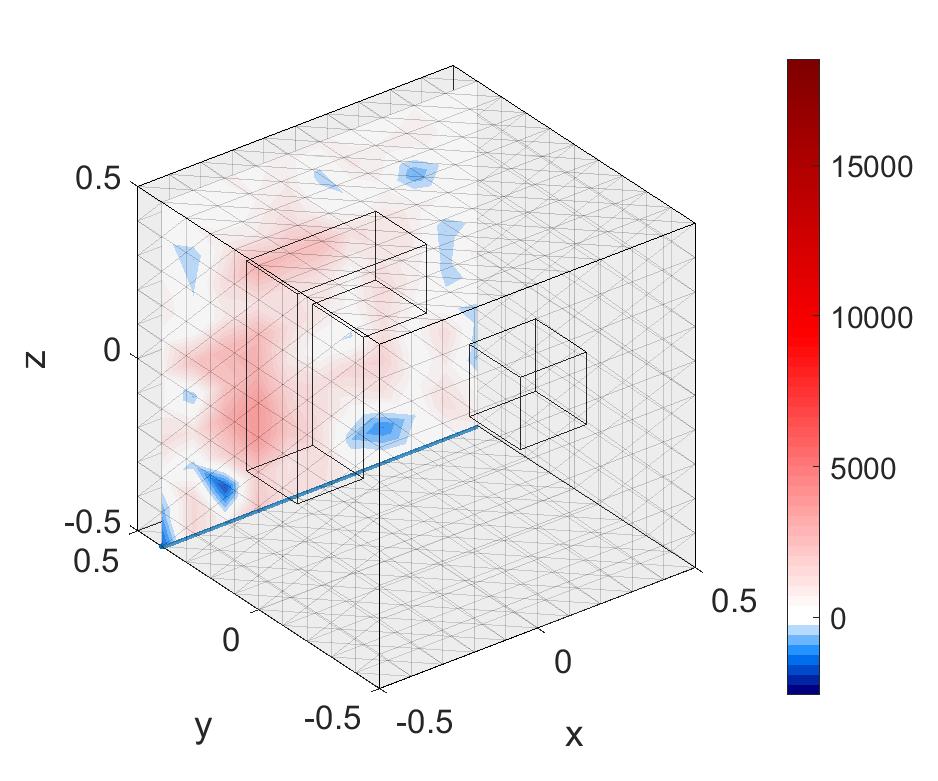}
 \caption{Shape reconstruction of two inclusions of the reconstructed difference in the Lam\'{e} parameter $\mu$ for the regularization parameters
$\omega=1.1\cdot 10^{-17}$ and ${\color{black}\sigma}=1.1\cdot 10^{-13}$ depicted as cuts with relative noise $\eta=1\%$.}
 \label{one_step_suitable_cuts_mu_noise_1}
 \end{center}
  \end{figure}
  
\noindent
In contrary to the low noise level ($\eta=1\%$), Figures \ref{one_step_suitable_3d_noise} and \ref{one_step_suitable_cuts_mu_noise} show us that the standard one-step
linearization method has problems in handling higher noise levels ($\eta=10\%$). As such, in the $3$D reconstruction 
(see Figure \ref{one_step_suitable_3d_noise}) it is hard to recognize the two inclusions even with respect to
the Lam\'e parameter $\mu$. Further on in the plots of the cuts in Figure \ref{one_step_suitable_cuts_mu_noise},
 the reconstructions of the inclusions are blurred out.
 \\
 \\
  \begin{figure}[H]
  \begin{center}
  \includegraphics[width=0.85\textwidth]{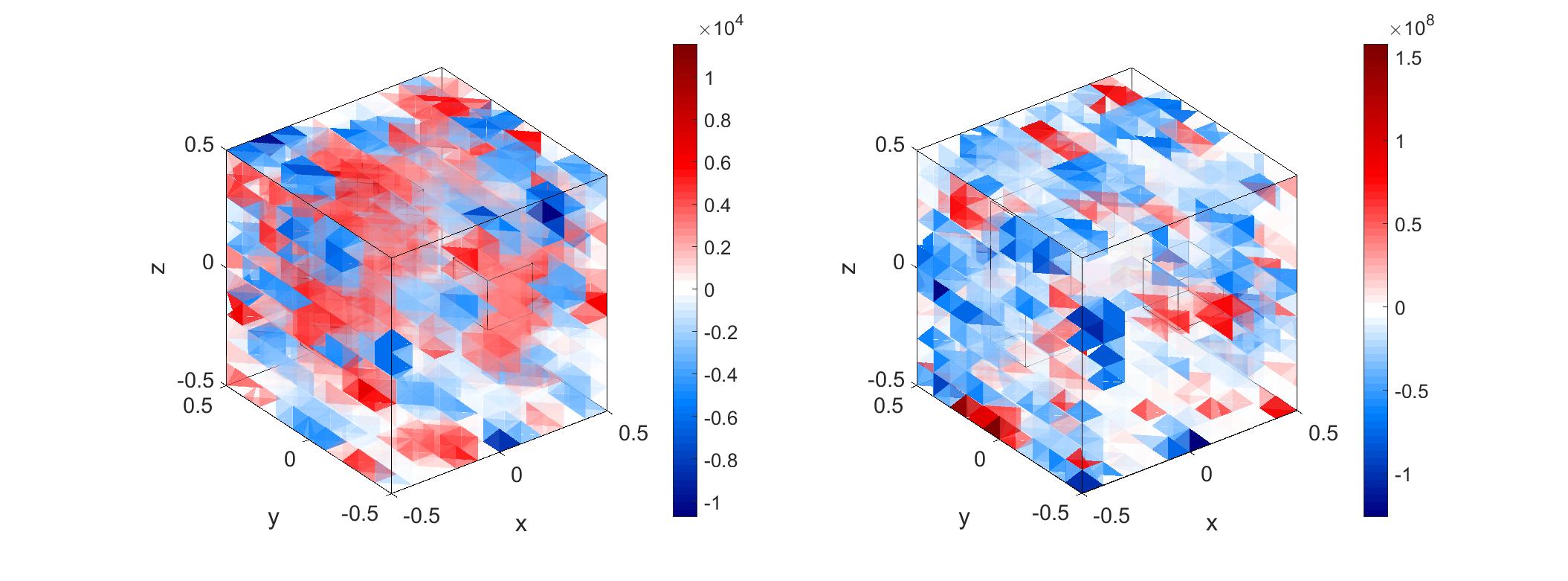}
 \caption{Shape reconstruction of two inclusions of the difference in the Lam\'{e} parameter $\mu$ (left hand side) and $\lambda$ (right hand side) for the regularization parameters
 $\omega=6\cdot 10^{-17}$ and ${\color{black}\sigma}=6\cdot 10^{-13}$ with relative noise $\eta=10\%$
 and transparency function $\alpha$ as shown in Figure \ref{trans_color_lin}}\label{one_step_suitable_3d_noise}
 \end{center}
  \end{figure}
  
  \begin{figure}[H]
  \begin{center}
  \includegraphics[width=0.32\textwidth]{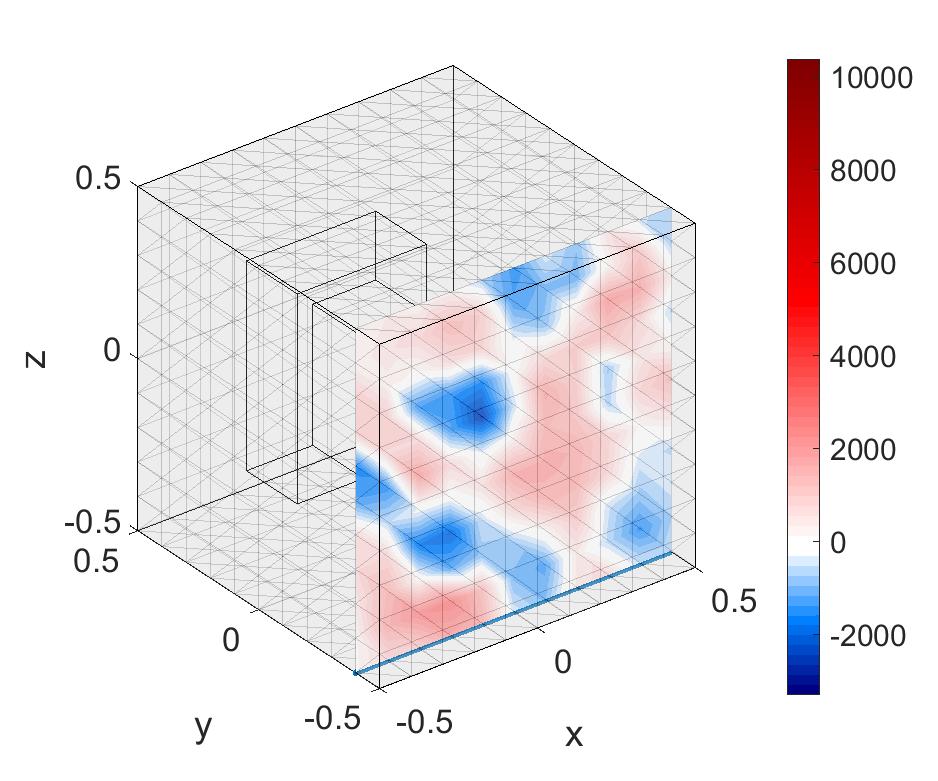}
  \includegraphics[width=0.32\textwidth]{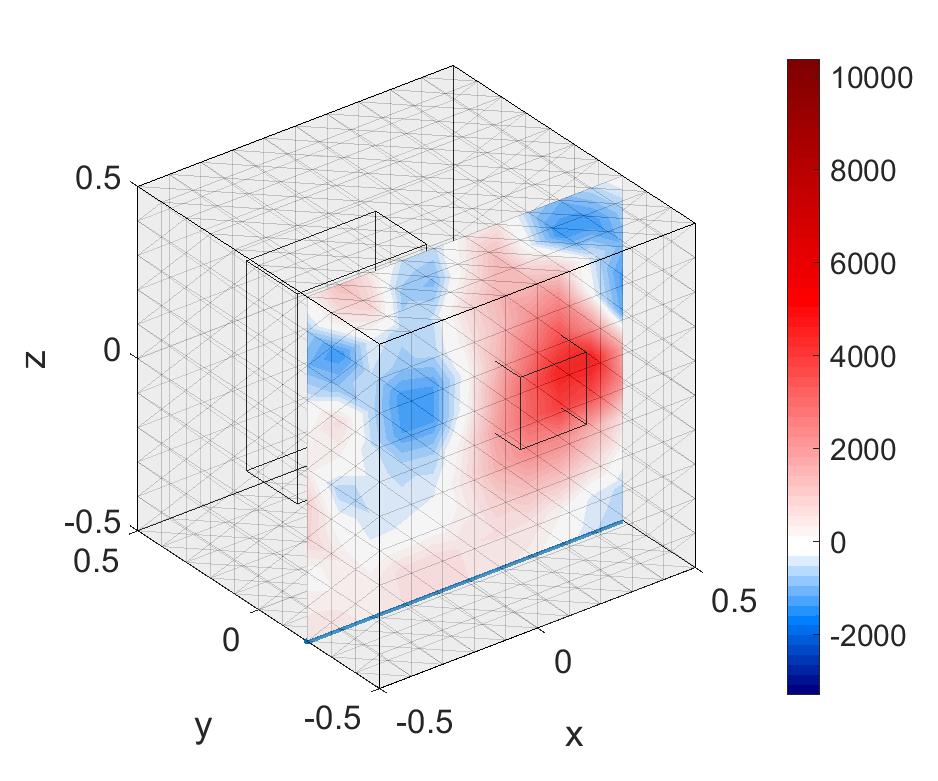}\\
  \includegraphics[width=0.32\textwidth]{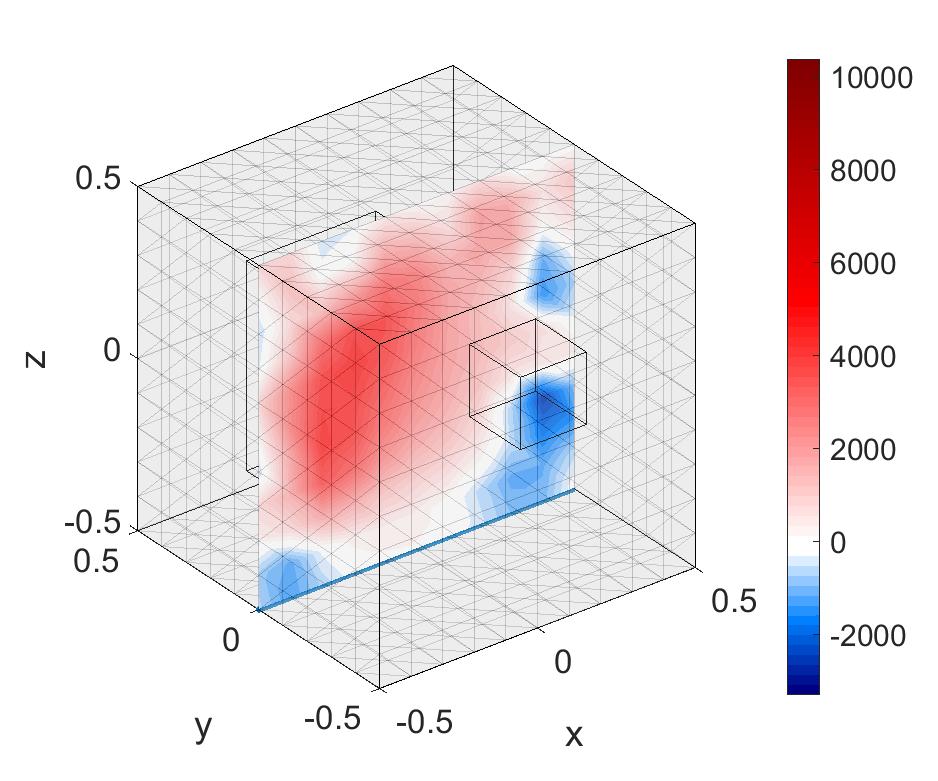}
\includegraphics[width=0.32\textwidth]{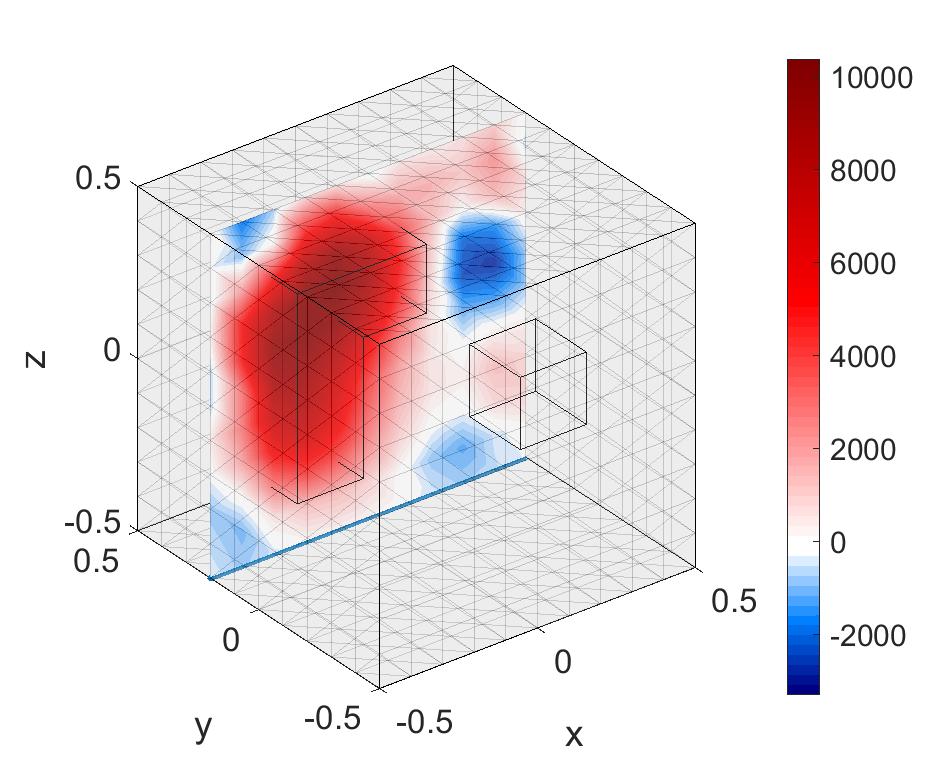}
  \includegraphics[width=0.32\textwidth]{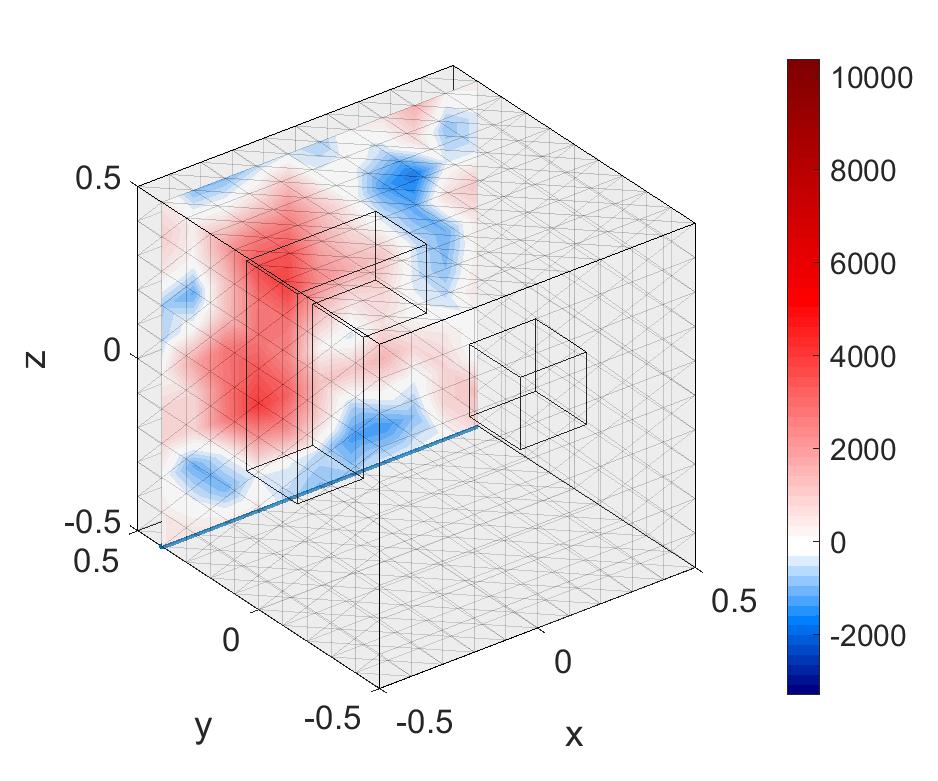}
 \caption{Shape reconstruction of two inclusions of the reconstructed difference in the Lam\'{e} parameter $\mu$ for the regularization parameters
 $\omega=6 \cdot 10^{-17}$ and ${\color{black}\sigma}=6\cdot 10^{-13}$ depicted as cuts with relative noise $\eta=10\%$.}
 \label{one_step_suitable_cuts_mu_noise}
 \end{center}
  \end{figure}
  \noindent
 \begin{remark}
  All in all, the numerical experiments of this section motivate the consideration of a modified minimization problem in order
 to obtain a stable method for noisy data as well as a good reconstruction for the Lam\'{e} parameter $\lambda$. 
 In doing so, we will combine the idea of the standard 
 one-step linearization with the monotonicity method.
 \end{remark}

 \section{Enhancing the Standard Residual-based Minimization Problem}

We summarize and present the required results concerning the monotonicity properties of the Neumann-to-Dirichlet operator
as well as the monotonicity methods introduced and proven in \cite{Eberle_Monotonicity} and \cite{Eberle_mon_test}.
 
 \subsection{Summary of the Monotonicity Methods}

First, we state the monotonicity estimates for the Neumann-to-Dirichlet operator $\Lambda(\lambda,\mu)$
{\color{black}
and denote by $u^g_{(\lambda,\mu)}$ the solution of problem (\ref{direct_1})-(\ref{direct_3}) for the boundary load
$g$ and the Lam\'e parameters $\lambda$ and $\mu$.
}
\begin{lemma}[Lemma 3.1 from \cite{Eberle_Monotonicity}]
\label{mono}
Let {\color{black}$(\lambda_1,\mu_1),(\lambda_2,\mu_2)  \in  L_+^\infty(\Omega)\times L_+^\infty(\Omega) $},  $g\in L^2(\Gamma_{\textup N})^d$ be an applied boundary force, and let 
$u_1:=u^{g}_{(\lambda_1,\mu_1)}\in \mathcal{V}$, $u_2:=u^{g}_{(\lambda_2,\mu_2)}\in \mathcal{V}$. Then
\begin{align}
\label{eqmono}
\int_\Omega &2(\mu_1-\mu_2)\hat{\nabla}u_2:\hat{\nabla}u_2+(\lambda_1-\lambda_2)\nabla\cdot u_2\nabla\cdot u_2\,dx\\ \nonumber
&\geq \langle g,\Lambda(\lambda_2,\mu_2)g\rangle-\langle g,\Lambda(\lambda_1,\mu_1)g\rangle\\ 
&\geq \int_\Omega 2(\mu_1-\mu_2)\hat{\nabla}u_1 :\hat{\nabla}u_1+(\lambda_1-\lambda_2)\nabla\cdot u_1 \nabla\cdot u_1\,dx.
\end{align}
\end{lemma}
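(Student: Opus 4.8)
The plan is to rewrite every quantity appearing in (\ref{eqmono}) in terms of the symmetric energy bilinear form
\begin{align*}
B_{(\lambda,\mu)}(u,v):=\int_\Omega 2\mu\,\hat\nabla u:\hat\nabla v+\lambda\,\nabla\cdot u\,\nabla\cdot v\,dx,
\end{align*}
and then to deduce the two inequalities from the positive definiteness of $B_{(\lambda_1,\mu_1)}$ and $B_{(\lambda_2,\mu_2)}$, which holds because $\lambda_i,\mu_i\in L_+^\infty(\Omega)$. First I would record the weak formulation (\ref{var-direct_1}), namely that $u_i=u^g_{(\lambda_i,\mu_i)}$ satisfies $B_{(\lambda_i,\mu_i)}(u_i,v)=\int_{\Gamma_{\textup N}}g\cdot v\,ds$ for all $v\in\mathcal V$, together with the energy identity $\langle g,\Lambda(\lambda_i,\mu_i)g\rangle=B_{(\lambda_i,\mu_i)}(u_i,u_i)$ coming from (\ref{bilinear_Lambda}). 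Testing the equation for $u_1$ with $v=u_2$ and the equation for $u_2$ with $v=u_1$ then yields the two cross-term identities
\begin{align*}
B_{(\lambda_1,\mu_1)}(u_1,u_2)=\langle g,\Lambda(\lambda_2,\mu_2)g\rangle,\qquad
B_{(\lambda_2,\mu_2)}(u_2,u_1)=\langle g,\Lambda(\lambda_1,\mu_1)g\rangle,
\end{align*}
which are the algebraic engine of the whole argument.

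For the upper estimate I would observe that the left-hand integral in (\ref{eqmono}) is exactly $B_{(\lambda_1,\mu_1)}(u_2,u_2)-B_{(\lambda_2,\mu_2)}(u_2,u_2)=B_{(\lambda_1,\mu_1)}(u_2,u_2)-\langle g,\Lambda(\lambda_2,\mu_2)g\rangle$. Expanding the non-negative quantity $B_{(\lambda_1,\mu_1)}(u_2-u_1,u_2-u_1)\ge 0$ and inserting $B_{(\lambda_1,\mu_1)}(u_1,u_1)=\langle g,\Lambda(\lambda_1,\mu_1)g\rangle$ together with the first cross-term identity gives $B_{(\lambda_1,\mu_1)}(u_2,u_2)-2\langle g,\Lambda(\lambda_2,\mu_2)g\rangle+\langle g,\Lambda(\lambda_1,\mu_1)g\rangle\ge 0$, which rearranges precisely into the desired upper bound. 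Symmetrically, for the lower estimate I would write the right-hand integral as $B_{(\lambda_1,\mu_1)}(u_1,u_1)-B_{(\lambda_2,\mu_2)}(u_1,u_1)=\langle g,\Lambda(\lambda_1,\mu_1)g\rangle-B_{(\lambda_2,\mu_2)}(u_1,u_1)$ and expand $B_{(\lambda_2,\mu_2)}(u_1-u_2,u_1-u_2)\ge 0$, now using the second cross-term identity and $B_{(\lambda_2,\mu_2)}(u_2,u_2)=\langle g,\Lambda(\lambda_2,\mu_2)g\rangle$, to obtain $\langle g,\Lambda(\lambda_2,\mu_2)g\rangle-2\langle g,\Lambda(\lambda_1,\mu_1)g\rangle+B_{(\lambda_2,\mu_2)}(u_1,u_1)\ge 0$, i.e.\ the lower bound.

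There is no deep analytic obstacle here; the argument is purely algebraic once the weak formulation and the energy identity are in place. The only points requiring care are the bookkeeping of which parameter pair indexes each bilinear form --- the upper bound must use $B_{(\lambda_1,\mu_1)}$ and the lower bound $B_{(\lambda_2,\mu_2)}$, so that the correct cross-term identity and the correct positive definiteness are invoked --- and the explicit verification that the difference of two energy bilinear forms evaluated at the same displacement reproduces the integrands $2(\mu_1-\mu_2)\hat\nabla u:\hat\nabla u+(\lambda_1-\lambda_2)\nabla\cdot u\,\nabla\cdot u$ appearing on the two outer sides of (\ref{eqmono}). Well-posedness of $u_1,u_2$ via Lax--Milgram and the symmetry of $B_{(\lambda_i,\mu_i)}$ are used implicitly throughout.
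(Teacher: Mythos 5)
Your proof is correct and follows essentially the same route as the original argument (the lemma is quoted here from the cited reference, where it is proven exactly this way): write everything in terms of the energy bilinear form, use the weak formulation to get the cross-term identities $B_{(\lambda_1,\mu_1)}(u_1,u_2)=\langle g,\Lambda(\lambda_2,\mu_2)g\rangle$ and $B_{(\lambda_2,\mu_2)}(u_2,u_1)=\langle g,\Lambda(\lambda_1,\mu_1)g\rangle$, and expand the nonnegative quantities $B_{(\lambda_1,\mu_1)}(u_2-u_1,u_2-u_1)\geq 0$ and $B_{(\lambda_2,\mu_2)}(u_1-u_2,u_1-u_2)\geq 0$ to obtain the upper and lower bounds, respectively. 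The bookkeeping of which parameter pair indexes each bilinear form is handled correctly, so there is nothing to add.
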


\noindent

\begin{lemma}[Lemma 2 from \cite{Eberle_mon_test}]\label{mono_2}
Let {\color{black}$(\lambda_1,\mu_1),(\lambda_2,\mu_2)  \in  L_+^\infty(\Omega)\times L_+^\infty(\Omega) $},   $g\in L^2(\Gamma_{\textup N})^d$ be an applied boundary force, and let 
\mbox{$u_1:=u^{g}_{(\lambda_1,\mu_1)}\in \mathcal{V}$}, $u_2:=u^{g}_{(\lambda_2,\mu_2)}\in \mathcal{V}$. Then
\begin{align}\label{mono_lame}
\langle g&,\Lambda(\lambda_2,\mu_2)g\rangle-\langle g,\Lambda(\lambda_1,\mu_1)g\rangle\\  \nonumber
&\geq 
\int_{\Omega} 2\left(\mu_2-\frac{\mu_2^2}{\mu_1}\right)\hat{\nabla}u_2:\hat{\nabla}u_2\,dx
+\int_{\Omega}\left(\lambda_2-\frac{\lambda_2^2}{\lambda_1}\right)\nabla\cdot u_2 \nabla \cdot u_2\,dx\\
&=
\int_{\Omega} 2\frac{\mu_2}{\mu_1}\left(\mu_1-\mu_2\right)\hat{\nabla}u_2:\hat{\nabla}u_2\,dx
+\int_{\Omega}\frac{\lambda_2}{\lambda_1}\left(\lambda_1-\lambda_2\right) \nabla\cdot u_2 \nabla \cdot u_2\,dx.
\end{align}
\end{lemma}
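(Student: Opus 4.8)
My plan is to reduce everything to the energy bilinear form and the variational identities that $u_1$ and $u_2$ satisfy. Using the bilinear form (\ref{bilinear_Lambda}) I would write $\langle g,\Lambda(\lambda_i,\mu_i)g\rangle = a_i(u_i,u_i)$, where I abbreviate $a_i(u,v):=\int_\Omega 2\mu_i\,\hat{\nabla}u:\hat{\nabla}v + \lambda_i\,\nabla\cdot u\,\nabla\cdot v\,dx$ for $i=1,2$ and $\ell(v):=\int_{\Gamma_{\textup N}}g\cdot v\,ds$. The weak formulation (\ref{var-direct_1}) then reads $a_i(u_i,v)=\ell(v)$ for all $v\in\mathcal{V}$, which in particular yields the cross relation $a_1(u_1,u_2)=\ell(u_2)=a_2(u_2,u_2)$.

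Next I would set $w:=u_1-u_2\in\mathcal{V}$ and expand $a_1(u_1,u_1)=a_1(w,w)+2a_1(u_1,u_2)-a_1(u_2,u_2)$. Substituting $a_1(u_1,u_2)=a_2(u_2,u_2)$ and rearranging produces the exact identity
\[
\langle g,\Lambda(\lambda_2,\mu_2)g\rangle-\langle g,\Lambda(\lambda_1,\mu_1)g\rangle = \big(a_1(u_2,u_2)-a_2(u_2,u_2)\big)-a_1(w,w),
\]
in which the first term already equals $\int_\Omega 2(\mu_1-\mu_2)\hat{\nabla}u_2:\hat{\nabla}u_2+(\lambda_1-\lambda_2)(\nabla\cdot u_2)^2\,dx$ and involves only $u_2$. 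Since $a_1(w,w)\ge 0$, this reproduces at once the upper estimate of Lemma \ref{mono}; the whole point is now to bound $a_1(w,w)$ from above purely in terms of $u_2$.

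For that, I would subtract the two weak formulations to find that $w$ solves $a_1(w,v)=(a_2-a_1)(u_2,v)$ for every $v\in\mathcal{V}$, and test with $v=w$ to get $a_1(w,w)=\int_\Omega 2(\mu_2-\mu_1)\hat{\nabla}u_2:\hat{\nabla}w+(\lambda_2-\lambda_1)\nabla\cdot u_2\,\nabla\cdot w\,dx$. Writing each factor with the weight $\sqrt{\mu_1}$ (resp. $\sqrt{\lambda_1}$), applying the Cauchy--Schwarz inequality to the $\mu$-integral and the $\lambda$-integral separately (after the pointwise bound $|\hat{\nabla}u_2:\hat{\nabla}w|\le|\hat{\nabla}u_2|\,|\hat{\nabla}w|$ on the Frobenius products), and then the elementary inequality $\sqrt{A}\sqrt{P}+\sqrt{B}\sqrt{Q}\le\sqrt{A+B}\,\sqrt{P+Q}$ with $A,B$ the two $u_2$-integrals and $P,Q$ the corresponding $w$-integrals, so that $P+Q=a_1(w,w)$, I would obtain $a_1(w,w)\le\sqrt{A+B}\,\sqrt{a_1(w,w)}$ and hence, after dividing by $\sqrt{a_1(w,w)}$,
\[
a_1(w,w)\le\int_\Omega 2\tfrac{(\mu_1-\mu_2)^2}{\mu_1}\hat{\nabla}u_2:\hat{\nabla}u_2+\tfrac{(\lambda_1-\lambda_2)^2}{\lambda_1}(\nabla\cdot u_2)^2\,dx.
\]
Here the positivity $\mu_1,\lambda_1\in L^\infty_+(\Omega)$ is exactly what lets me divide by them pointwise.

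Finally I would insert this estimate into the identity and collect the coefficients: $(\mu_1-\mu_2)-\tfrac{(\mu_1-\mu_2)^2}{\mu_1}=\tfrac{\mu_2}{\mu_1}(\mu_1-\mu_2)=\mu_2-\tfrac{\mu_2^2}{\mu_1}$, and likewise for $\lambda$, which is precisely the claimed lower bound together with its rewritten form. I expect the only genuinely delicate step to be the weighted Cauchy--Schwarz estimate of the third paragraph: the split must be chosen so that the residual energy $a_1(w,w)$ reappears intact on the right and cancels, rather than leaving a spurious factor, and the matrix-valued Frobenius products have to be controlled pointwise before integrating.
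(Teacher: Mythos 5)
Your proof is correct, but it takes a genuinely different route from the one behind the paper's statement: the lemma is quoted from \cite{Eberle_mon_test} and not re-proven here, and the argument used there (as in the scalar EIT case \cite{harrach2013monotonicity}) is a direct completing-the-square computation. In your notation $a_i(u,v)=\int_\Omega 2\mu_i\,\hat{\nabla}u:\hat{\nabla}v+\lambda_i\,\nabla\cdot u\,\nabla\cdot v\,dx$, that proof starts from $\langle g,\Lambda(\lambda_2,\mu_2)g\rangle-\langle g,\Lambda(\lambda_1,\mu_1)g\rangle=a_2(u_2,u_2)-a_1(u_1,u_1)$ and expands the manifestly nonnegative quantity
\begin{align*}
0\leq \int_\Omega 2\mu_1\Big|\hat{\nabla}u_1-\tfrac{\mu_2}{\mu_1}\hat{\nabla}u_2\Big|^2
+\lambda_1\Big(\nabla\cdot u_1-\tfrac{\lambda_2}{\lambda_1}\nabla\cdot u_2\Big)^2\,dx ,
\end{align*}
where the square is completed \emph{componentwise}, with the two different ratios $\mu_2/\mu_1$ and $\lambda_2/\lambda_1$; the cross terms assemble exactly into $-2a_2(u_2,u_1)=-2\ell(u_1)=-2a_1(u_1,u_1)$, which yields $a_1(u_1,u_1)\leq\int_\Omega 2\tfrac{\mu_2^2}{\mu_1}|\hat{\nabla}u_2|^2+\tfrac{\lambda_2^2}{\lambda_1}(\nabla\cdot u_2)^2\,dx$ and hence the claimed lower bound in two lines. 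You instead establish the exact error identity $\langle g,\Lambda(\lambda_2,\mu_2)g\rangle-\langle g,\Lambda(\lambda_1,\mu_1)g\rangle=(a_1-a_2)(u_2,u_2)-a_1(w,w)$ with $w=u_1-u_2$, then control $a_1(w,w)$ via the Galerkin-type relation $a_1(w,v)=(a_2-a_1)(u_2,v)$ and a weighted Cauchy--Schwarz estimate, including the two-term step $\sqrt{A}\sqrt{P}+\sqrt{B}\sqrt{Q}\leq\sqrt{A+B}\sqrt{P+Q}$, which is precisely what makes the full energy $P+Q=a_1(w,w)$ reappear and cancel when two independent parameters are present. All of your steps check out: the cross relation $a_1(u_1,u_2)=\ell(u_2)=a_2(u_2,u_2)$, the pointwise Frobenius bound, the algebra $(\mu_1-\mu_2)-\tfrac{(\mu_1-\mu_2)^2}{\mu_1}=\mu_2-\tfrac{\mu_2^2}{\mu_1}$, and the division by $\sqrt{a_1(w,w)}$ (vacuous when $a_1(w,w)=0$). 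What your version buys is that one identity simultaneously yields both inequalities of Lemma~\ref{mono} (the upper one from $a_1(w,w)\geq 0$) and, after the estimate, the refined lower bound of Lemma~\ref{mono_2}; what the cited version buys is brevity, since completing the square separately in the $\hat{\nabla}$- and $\nabla\cdot$-components handles the two Lam\'e parameters automatically and avoids the two-term Cauchy--Schwarz entirely.
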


\noindent
{\color{black}
As in the previous section, we denote by $(\lambda_0,\mu_0)$ the material without inclusion. Following Lemma \ref{mono},
we have
}

\begin{corollary}[Corollary 3.2 from \cite{Eberle_Monotonicity}]\label{monotonicity}
For $(\lambda_0,\mu_0),(\lambda_1,\mu_1) \in L_+^\infty(\Omega)\times L_+^\infty(\Omega) $
\begin{align}
\lambda_0\leq \lambda_1 \text{ and } \mu_0\leq \mu_1 \quad \text{  implies } \quad \Lambda(\lambda_0,\mu_0)\geq \Lambda(\lambda_1,\mu_1).
\end{align}
\end{corollary}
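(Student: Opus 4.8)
The plan is to derive Corollary \ref{monotonicity} as an immediate consequence of the monotonicity estimate in Lemma \ref{mono}. The statement to prove is that, for Lam\'e parameter pairs $(\lambda_0,\mu_0),(\lambda_1,\mu_1) \in L_+^\infty(\Omega)\times L_+^\infty(\Omega)$, the pointwise ordering $\lambda_0\leq \lambda_1$ and $\mu_0\leq \mu_1$ implies the operator inequality $\Lambda(\lambda_0,\mu_0)\geq \Lambda(\lambda_1,\mu_1)$ in the sense of quadratic forms. So what I really need to show is that for every boundary load $g\in L^2(\Gamma_{\textup N})^d$ one has $\langle g,\Lambda(\lambda_0,\mu_0)g\rangle \geq \langle g,\Lambda(\lambda_1,\mu_1)g\rangle$.

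First I would apply Lemma \ref{mono} with the identification $(\lambda_1,\mu_1)$ in the lemma playing the role of the larger pair and $(\lambda_2,\mu_2)$ the role of the smaller pair. Concretely, I set the lemma's $(\lambda_1,\mu_1):=(\lambda_1,\mu_1)$ (the stiffer inclusion material) and the lemma's $(\lambda_2,\mu_2):=(\lambda_0,\mu_0)$ (the background), and write $u_0:=u^g_{(\lambda_0,\mu_0)}$. The relevant part of the two-sided estimate is the upper bound, which reads
\begin{align*}
\langle g,\Lambda(\lambda_0,\mu_0)g\rangle-\langle g,\Lambda(\lambda_1,\mu_1)g\rangle
\geq \int_\Omega 2(\mu_1-\mu_0)\,\hat{\nabla}u_0 :\hat{\nabla}u_0+(\lambda_1-\lambda_0)\,\nabla\cdot u_0\, \nabla\cdot u_0\,dx.
\end{align*}

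Next I would argue that the right-hand integrand is pointwise nonnegative almost everywhere. Under the hypotheses we have $\mu_1-\mu_0\geq 0$ and $\lambda_1-\lambda_0\geq 0$ a.e.\ in $\Omega$. The factor $\hat{\nabla}u_0 :\hat{\nabla}u_0 = |\hat{\nabla}u_0|^2\geq 0$ is the squared Frobenius norm of the symmetric gradient, and $\nabla\cdot u_0\,\nabla\cdot u_0 = (\nabla\cdot u_0)^2\geq 0$ is a square, so each of the two summands is a product of two nonnegative quantities and the integral is therefore nonnegative. Combining this with the displayed inequality gives $\langle g,\Lambda(\lambda_0,\mu_0)g\rangle-\langle g,\Lambda(\lambda_1,\mu_1)g\rangle\geq 0$, which is exactly the claimed form ordering since $g$ was arbitrary.

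I do not anticipate a genuine obstacle here, as the result is essentially a sign-chase once the monotonicity estimate is in hand; the only point requiring a small amount of care is the correct role assignment of the two parameter pairs in Lemma \ref{mono} so that the difference of quadratic forms appears with the orientation matching the corollary's claim, together with noting that the nonnegativity of the integrand holds \emph{pointwise} before integrating, rather than relying on any cancellation. This is why I would phrase the argument via the lower bound of the two-sided estimate (the bound involving $u_0$) rather than the upper bound, so that the sign of $\mu_1-\mu_0$ and $\lambda_1-\lambda_0$ directly controls the integrand.
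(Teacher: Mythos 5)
Your strategy coincides with the paper's own derivation: Corollary \ref{monotonicity} is presented there as an immediate consequence of Lemma \ref{mono}, and the sign-chase on a nonnegative integrand is the right idea. The execution, however, contains a concrete error. With your role assignment (the lemma's $(\lambda_2,\mu_2):=(\lambda_0,\mu_0)$ and the lemma's $(\lambda_1,\mu_1):=(\lambda_1,\mu_1)$), your $u_0=u^g_{(\lambda_0,\mu_0)}$ is the lemma's $u_2$, and the lemma places $u_2$ in the \emph{upper} bound:
\begin{align*}
\int_\Omega 2(\mu_1-\mu_0)\,\hat{\nabla}u_0:\hat{\nabla}u_0+(\lambda_1-\lambda_0)\,(\nabla\cdot u_0)^2\,dx
\;\geq\;
\langle g,\Lambda(\lambda_0,\mu_0)g\rangle-\langle g,\Lambda(\lambda_1,\mu_1)g\rangle.
\end{align*}
The inequality you display, with the quadratic-form difference bounded \emph{below} by the integral involving $u_0$, is the reverse of this and does not follow from Lemma \ref{mono}; indeed it is false in general, since together with the genuine upper bound it would force equality, i.e.\ exactness of the linearization at the background solution. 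Your closing remark that one should use ``the bound involving $u_0$'' is therefore exactly the wrong choice of the two.

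The gap is easily repaired: invoke the lemma's lower bound, which involves $u_1:=u^g_{(\lambda_1,\mu_1)}$, the solution for the \emph{larger} parameter pair,
\begin{align*}
\langle g,\Lambda(\lambda_0,\mu_0)g\rangle-\langle g,\Lambda(\lambda_1,\mu_1)g\rangle
\;\geq\;
\int_\Omega 2(\mu_1-\mu_0)\,\hat{\nabla}u_1:\hat{\nabla}u_1+(\lambda_1-\lambda_0)\,(\nabla\cdot u_1)^2\,dx.
\end{align*}
Your pointwise nonnegativity argument applies verbatim to this integrand ($\mu_1\geq\mu_0$ and $\lambda_1\geq\lambda_0$ a.e., while $\hat{\nabla}u_1:\hat{\nabla}u_1$ and $(\nabla\cdot u_1)^2$ are nonnegative), so the right-hand side is nonnegative and $\langle g,\Lambda(\lambda_0,\mu_0)g\rangle\geq\langle g,\Lambda(\lambda_1,\mu_1)g\rangle$ for every $g\in L^2(\Gamma_{\textup N})^d$, which is the claim. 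In short: correct approach, same as the paper's, but the roles of $u_0$ and $u_1$ in the two-sided estimate must be swapped.
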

\noindent
Further on, we give a short overview concerning the monotonicity methods, where  we restrict ourselves to the case $\lambda_1\geq \lambda_0$, $\mu_1\geq \mu_0$.
In the following, let $\mathcal{D}$ be the unknown inclusion and $\chi_\mathcal{D}$ the characteristic function w.r.t. $\mathcal{D}$. {\color{black} In addition, we deal with "noisy difference measurements",
i.e. distance measurements between $u^g_{(\lambda,\mu)}$ and $u^g_{(\lambda_0,\mu_0)}$ affected by noise,
which stem from system (\ref{monotonicity_test_1}).}
\\
\\
{\color{black}
We define the outer support in correspondence to \cite{Eberle_mon_test} as follows: 
let $\phi=(\phi_1,\phi_2):\Omega\to \mathbb{R}^2$ be a measurable
function,
the outer support $\underset{\partial\Omega}{\mathrm{out}}\,\mathrm{supp}(\phi)$ is the 
 complement (in $\overline{\Omega}$) of the union of those relatively open $U\subseteq\overline{\Omega}$
 that are connected to $\partial\Omega$ and for which $\phi\vert_{U}=0$,
\noindent
respectively.
}
\\
\begin{corollary}{Linearized monotonicity test }(Corollary 2.7 from \cite{Eberle_mon_test}\label{lin_mon_1})
\\
Let $\lambda_0$, $\lambda_1$, $\mu_0$, $\mu_1\in\mathbb{R}^+$ with $\lambda_1>\lambda_0$, $\mu_1>\mu_0$  
and assume that 
$(\lambda,\mu)=(\lambda_0+(\lambda_1-\lambda_0)\chi_\mathcal{D},\mu_0+(\mu_1-\mu_0)\chi_{\mathcal{D}})$
with $\mathcal{D}=\mathrm{out}_{\partial\Omega}\,\mathrm{supp}((\lambda-\lambda_0,\mu-\mu_0)^T)$.
Further on let $\alpha^\lambda,\alpha^\mu\geq 0$, $\alpha^\lambda+\alpha^\mu>0$ and $\alpha^\lambda \leq \tfrac{\lambda_0}{\lambda_1}(\lambda_1 -\lambda_0)$,  $\alpha^\mu \leq\tfrac{\mu_0}{\mu_1} (\mu_1 -\mu_ 0)$.
Then for every open set $\mathcal{B}$
\begin{align*}
\mathcal{B}\subseteq\mathcal{D}\quad\text{if and only if}\quad \Lambda(\lambda_0,\mu_0)+\Lambda^\prime(\lambda_0,\mu_0)(\alpha^\lambda \chi_\mathcal{B}, \alpha^\mu \chi_\mathcal{B})\geq \Lambda(\lambda,\mu).
\end{align*}
\end{corollary}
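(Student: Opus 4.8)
The plan is to prove the biconditional in Corollary \ref{lin_mon_1} by proving the two implications separately, using the monotonicity estimate of Lemma \ref{mono} (in its corollary form and its linearized consequence \eqref{mono_Frechet}) as the main engine. The key structural observation is that the linearized test operator $\Lambda(\lambda_0,\mu_0)+\Lambda'(\lambda_0,\mu_0)(\alpha^\lambda\chi_{\mathcal B},\alpha^\mu\chi_{\mathcal B})$ should be compared against the true operator $\Lambda(\lambda,\mu)$ at the anomaly, and both directions of the equivalence ultimately reduce to sign-definiteness of certain integrals over $\mathcal B$ and $\mathcal D$.

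**The ``only if'' direction** (from $\mathcal B\subseteq\mathcal D$ to the operator inequality) is the constructive, computational half. First I would apply the left-hand (nonlinear) monotonicity bound from Lemma \ref{mono} with $(\lambda_1,\mu_1)$ the anomalous parameters and $(\lambda_2,\mu_2)=(\lambda_0,\mu_0)$, rewriting $\langle g,\Lambda(\lambda_0,\mu_0)g\rangle-\langle g,\Lambda(\lambda,\mu)g\rangle$ from below by an integral of $2(\mu-\mu_0)\hat\nabla u_0:\hat\nabla u_0+(\lambda-\lambda_0)\nabla\cdot u_0\,\nabla\cdot u_0$ against the background solution $u_0=u^g_{(\lambda_0,\mu_0)}$. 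Since the contrasts $\mu-\mu_0=(\mu_1-\mu_0)\chi_{\mathcal D}$ and $\lambda-\lambda_0=(\lambda_1-\lambda_0)\chi_{\mathcal D}$ are supported on $\mathcal D$, this lower bound is an integral over $\mathcal D$. The linearized operator contributes, via the bilinear form of $\Lambda'(\lambda_0,\mu_0)$, an integral over $\mathcal B$ of $2\alpha^\mu\hat\nabla u_0:\hat\nabla u_0+\alpha^\lambda\nabla\cdot u_0\,\nabla\cdot u_0$. The hypotheses $\alpha^\lambda\le\tfrac{\lambda_0}{\lambda_1}(\lambda_1-\lambda_0)$ and $\alpha^\mu\le\tfrac{\mu_0}{\mu_1}(\mu_1-\mu_0)$ are precisely what is needed so that, on $\mathcal B\subseteq\mathcal D$, the pointwise contrast coefficients dominate $\alpha^\lambda,\alpha^\mu$; combined with the nonnegativity of the integrands $\hat\nabla u_0:\hat\nabla u_0\ge 0$ and $(\nabla\cdot u_0)^2\ge 0$, this yields the desired sign and hence the quadratic-form inequality $\Lambda(\lambda_0,\mu_0)+\Lambda'(\lambda_0,\mu_0)(\alpha^\lambda\chi_{\mathcal B},\alpha^\mu\chi_{\mathcal B})\ge\Lambda(\lambda,\mu)$.

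**The ``if'' direction** (contrapositive: if $\mathcal B\not\subseteq\mathcal D$, the inequality fails) is where I expect the main obstacle, since it cannot be done by a pointwise estimate and instead requires a \emph{localization} argument. Assuming $\mathcal B\not\subseteq\mathcal D$, there is a nonempty open subset $\mathcal B_0\subseteq\mathcal B\setminus\overline{\mathcal D}$ connected to $\partial\Omega$ on which the contrast vanishes while $\alpha^\lambda\chi_{\mathcal B}+\alpha^\mu\chi_{\mathcal B}>0$. The strategy is to construct, or invoke from \cite{Eberle_mon_test}, a sequence of boundary loads $g_k$ whose background solutions $u_0^{(k)}$ have energy $\int 2\hat\nabla u_0^{(k)}:\hat\nabla u_0^{(k)}+(\nabla\cdot u_0^{(k)})^2$ blowing up on $\mathcal B_0$ while staying controlled on $\mathcal D$ --- a localized-potentials / Runge-approximation type result for the Lam\'e system. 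Against such $g_k$, the $\Lambda'$-term (negative, with an integral dominated by the blow-up on $\mathcal B_0$) overwhelms the bounded difference $\langle g_k,(\Lambda(\lambda_0,\mu_0)-\Lambda(\lambda,\mu))g_k\rangle$ controlled via the right-hand bound of Lemma \ref{mono}, forcing $\langle g_k,(\Lambda(\lambda_0,\mu_0)+\Lambda'(\lambda_0,\mu_0)(\alpha^\lambda\chi_{\mathcal B},\alpha^\mu\chi_{\mathcal B})-\Lambda(\lambda,\mu))g_k\rangle\to-\infty$, which contradicts the quadratic-form inequality. The delicate points are verifying that the localized-potentials result for elasticity applies on the relevant open set (using the connectedness-to-$\partial\Omega$ hypothesis built into the definition of outer support) and checking that the $\mathcal D$-contribution really does stay bounded relative to the $\mathcal B_0$-blow-up; I would lean on the corresponding localization lemma already established for this operator in \cite{Eberle_mon_test} rather than reprove it from scratch.
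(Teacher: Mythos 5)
Your overall architecture (a direct estimate for $\mathcal{B}\subseteq\mathcal{D}$, localized potentials for the converse) is the same as in the proof this paper cites from \cite{Eberle_mon_test}, and the same machinery reappears in the present paper's proof of Lemma \ref{pixel_+}. However, your first direction contains a genuine error: Lemma \ref{mono} does not provide the bound you use. In Lemma \ref{mono}, the integral of the \emph{full} contrast against the background solution $u_0=u^g_{(\lambda_0,\mu_0)}$, i.e. $\int_\Omega 2(\mu-\mu_0)\hat{\nabla}u_0:\hat{\nabla}u_0+(\lambda-\lambda_0)\,\nabla\cdot u_0\,\nabla\cdot u_0\,dx$, is an \emph{upper} bound for $\langle g,(\Lambda(\lambda_0,\mu_0)-\Lambda(\lambda,\mu))g\rangle$; the lower bound in that lemma involves the anomalous solution $u^g_{(\lambda,\mu)}$, which is useless here because the $\Lambda'$-term you must dominate is an integral of $\hat{\nabla}u_0:\hat{\nabla}u_0$ and $(\nabla\cdot u_0)^2$. (If the full-contrast integral against $u_0$ were also a lower bound, both bounds in Lemma \ref{mono} would coincide and the forward map would be affine, which it is not.) The correct engine is Lemma \ref{mono_2}: it bounds the difference from below by an integral against $u_0$ with the \emph{attenuated} contrasts $\tfrac{\mu_0}{\mu}(\mu-\mu_0)$ and $\tfrac{\lambda_0}{\lambda}(\lambda-\lambda_0)$, which on $\mathcal{D}$ equal $\tfrac{\mu_0}{\mu_1}(\mu_1-\mu_0)$ and $\tfrac{\lambda_0}{\lambda_1}(\lambda_1-\lambda_0)$. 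This is exactly where the hypotheses $\alpha^\mu\leq\tfrac{\mu_0}{\mu_1}(\mu_1-\mu_0)$ and $\alpha^\lambda\leq\tfrac{\lambda_0}{\lambda_1}(\lambda_1-\lambda_0)$ enter; with Lemma \ref{mono} alone those factors would be unexplained. Replacing Lemma \ref{mono} by Lemma \ref{mono_2} in your chain of inequalities makes this direction go through verbatim -- compare Step 1 of the proof of Lemma \ref{pixel_+}, where the authors invoke Lemma \ref{mono_2} (multiplied by $-1$), not Lemma \ref{mono}, for precisely this estimate.

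Your second direction is correct in substance and matches the cited argument: for $\mathcal{B}\not\subseteq\mathcal{D}$ one uses localized potentials for the Lam\'e system (Theorem 2.1 of \cite{Eberle_mon_test}) on an open subset of $\mathcal{B}$ outside $\overline{\mathcal{D}}$, blowing up both energies $\int\hat{\nabla}u_0:\hat{\nabla}u_0\,dx$ and $\int(\nabla\cdot u_0)^2dx$ there while driving them to zero on $\mathcal{D}$, so that the $\Lambda'$-term diverges while the true difference stays controlled. Note, though, the same left/right slip as above: the quantity $\langle g_m,(\Lambda(\lambda_0,\mu_0)-\Lambda(\lambda,\mu))g_m\rangle$ must be bounded \emph{above}, and that is the $u_0$-based, $\mathcal{D}$-supported bound on the left of the chain in Lemma \ref{mono}, not the right-hand bound you name. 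Once these two lemma citations are corrected, the proposal reproduces the intended proof.
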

\noindent

\begin{corollary}{Linearized monotonicity test for noisy data }(Corollary 2.9 from \cite{Eberle_mon_test}\label{lin_mono_test_noise})
\\
Let $\lambda_0$, $\lambda_1$, $\mu_0$, $\mu_1\in\mathbb{R}^+$ with $\lambda_1>\lambda_0$, $\mu_1>\mu_0$  
and assume that 
$(\lambda,\mu)=(\lambda_0+(\lambda_1-\lambda_0)\chi_\mathcal{D},\mu_0+(\mu_1-\mu_0)\chi_{\mathcal{D}})$
with $\mathcal{D}=\mathrm{out}_{\partial\Omega}\,\mathrm{supp}((\lambda-\lambda_0,\mu-\mu_0)^T)$.
Further on, let $\alpha^\lambda,\alpha^\mu\geq 0$, $\alpha^\lambda+\alpha^\mu>0$ with $\alpha^\lambda \leq \frac{\lambda_0}{\lambda_1} (\lambda_1 -\lambda_0) $,
$\alpha^\mu\leq \frac{\mu_0}{\mu_1} (\mu_1 -\mu_0) $.
Let $\Lambda^\delta$ be the Neumann-to-Dirichlet operator for noisy difference measurements with noise level $\delta>0$.
Then for every open set $\mathcal{B}\subseteq\Omega$ there exists a noise level $\delta_0>0$, such that for all
$0<\delta<\delta_0$, $\mathcal{B}$ is correctly detected as inside or not inside the inclusion $\mathcal{D}$ 
by the following monotonicity test
\begin{eqnarray*}
\mathcal{B}\subseteq\mathcal{D}\quad \textnormal{\it if and only if} \quad
\Lambda(\lambda_0,\mu_0) +\Lambda^\prime(\lambda_0,\mu_0)(\alpha^\lambda\chi_{\mathcal{B}},\alpha^\mu\chi_{\mathcal{B}})-\Lambda^\delta(\lambda,\mu)+\delta I \geq 0.
\end{eqnarray*}
\end{corollary}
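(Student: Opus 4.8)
The plan is to reduce the noisy test to the exact linearized monotonicity test of Corollary~\ref{lin_mon_1} by viewing the extra summand $\delta I$ together with the measurement error as a single \emph{nonnegative} perturbation of controlled size. Writing
\[
T(\mathcal{B}):=\Lambda(\lambda_0,\mu_0)+\Lambda^\prime(\lambda_0,\mu_0)(\alpha^\lambda\chi_{\mathcal{B}},\alpha^\mu\chi_{\mathcal{B}})-\Lambda(\lambda,\mu)
\]
for the exact test operator, Corollary~\ref{lin_mon_1} tells us that $\mathcal{B}\subseteq\mathcal{D}$ holds if and only if $T(\mathcal{B})\geq 0$ in the sense of quadratic forms. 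The noisy test operator decomposes as
\[
T^\delta(\mathcal{B}):=\Lambda(\lambda_0,\mu_0)+\Lambda^\prime(\lambda_0,\mu_0)(\alpha^\lambda\chi_{\mathcal{B}},\alpha^\mu\chi_{\mathcal{B}})-\Lambda^\delta(\lambda,\mu)+\delta I = T(\mathcal{B})+E_\delta,
\]
where $E_\delta:=\bigl(\Lambda(\lambda,\mu)-\Lambda^\delta(\lambda,\mu)\bigr)+\delta I$. I would first record the two properties of $E_\delta$ that drive the argument. Since the noise level is measured in the operator norm, $\Vert\Lambda^\delta(\lambda,\mu)-\Lambda(\lambda,\mu)\Vert\leq\delta$, we obtain $\Lambda(\lambda,\mu)-\Lambda^\delta(\lambda,\mu)\geq-\delta I$, hence $E_\delta\geq 0$, and at the same time $\Vert E_\delta\Vert\leq 2\delta$.

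Next I would split the equivalence into its two directions. For the inclusion direction, assume $\mathcal{B}\subseteq\mathcal{D}$. Then $T(\mathcal{B})\geq 0$ by Corollary~\ref{lin_mon_1}, and because $E_\delta\geq 0$ the sum $T^\delta(\mathcal{B})=T(\mathcal{B})+E_\delta$ is again positive semidefinite; thus the test is satisfied for \emph{every} $\delta>0$, with no smallness assumption needed. For the converse I would argue by contraposition: if $\mathcal{B}\not\subseteq\mathcal{D}$, then $T(\mathcal{B})\not\geq 0$, so by definition of positive semidefiniteness there is some $g\in L^2(\Gamma_{\textup N})^d$ with $\Vert g\Vert=1$ and $\langle g,T(\mathcal{B})g\rangle=:-2c<0$. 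Testing $T^\delta(\mathcal{B})$ against this fixed $g$ and using $\Vert E_\delta\Vert\leq 2\delta$ yields $\langle g,T^\delta(\mathcal{B})g\rangle\leq -2c+2\delta$, which is strictly negative as soon as $\delta<c$. Setting $\delta_0:=c$ then guarantees $T^\delta(\mathcal{B})\not\geq 0$ for all $0<\delta<\delta_0$, i.e. the test correctly reports $\mathcal{B}\not\subseteq\mathcal{D}$.

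The main point to watch is the asymmetry between the two directions: the inclusion direction is stable under arbitrarily large noise, because the $\delta I$ term was introduced precisely to dominate the measurement error, whereas the exclusion direction only survives below the threshold $\delta_0=c$, which depends on $\mathcal{B}$ through the size of the negativity witness. This dependence is exactly what the statement permits, since $\delta_0$ is quantified after $\mathcal{B}$. I expect the only genuine subtlety to be ensuring that a \emph{strictly} negative value of the quadratic form is available rather than merely a nonpositive one; this is immediate from the meaning of $T(\mathcal{B})\not\geq 0$, but it is worth noting that $T(\mathcal{B})$ is a difference of compact self-adjoint operators, so its negative spectrum, when nonempty, consists of genuine eigenvalues bounded away from zero, and $g$ may be taken as a corresponding eigenvector, which makes the constant $c$ and hence $\delta_0$ fully explicit.
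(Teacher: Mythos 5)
Your proof is correct, and since the paper itself does not prove this corollary (it is quoted verbatim from \cite{Eberle_mon_test}), the comparison is with the standard argument used there, which yours matches essentially exactly: decompose the noisy test operator as the exact test operator plus the perturbation $E_\delta=(\Lambda(\lambda,\mu)-\Lambda^\delta(\lambda,\mu))+\delta I$, note $E_\delta\geq 0$ and $\Vert E_\delta\Vert\leq 2\delta$, so that the inclusion direction holds for every $\delta>0$ while a negativity witness for the exact test survives for all $\delta$ below a $\mathcal{B}$-dependent threshold $\delta_0$. One peripheral slip worth noting: the negative eigenvalues of a compact self-adjoint operator need \emph{not} be bounded away from zero (they may accumulate at $0$ from below); what your argument actually requires --- the existence of a single witness $g$ with $\langle g,T(\mathcal{B})g\rangle<0$, or equivalently the most negative eigenvalue --- is nevertheless available, so nothing in the proof breaks.
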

 \noindent
 Finally, we present the result (see Figure \ref{result_lin_noise}) obtained from noisy data $\Lambda^{\delta}$ 
 with the linearized monotonicity method as described in Corollary \ref{lin_mono_test_noise}, where we 
 use the same pixel partition as for the one-step linearization method.

   \begin{figure}[H]
  \begin{center}
  \includegraphics[width=0.4\textwidth]{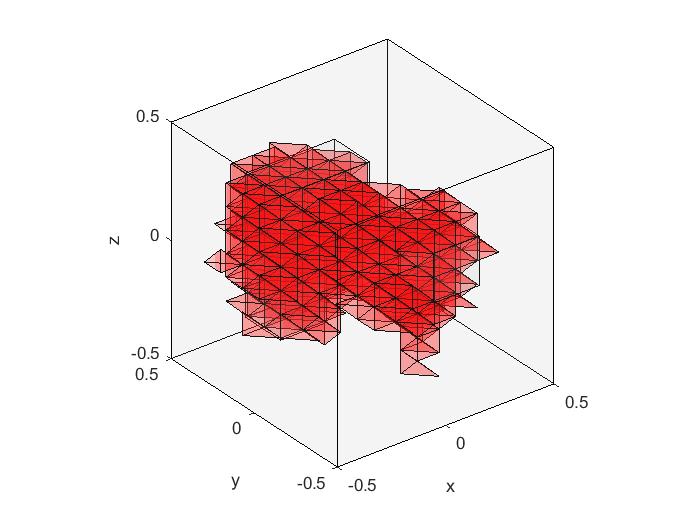}
 \caption{Shape reconstruction of two inclusions (red) for
$\alpha^\lambda=0.28(\lambda_1-\lambda_0)\approx 4.6\cdot 10^5$Pa, $\alpha^\mu=0.28(\mu_1-\mu_0)\approx 4.7\cdot 10^3$ Pa 
with relative noise $\eta=0.1\%$ and $\delta=1.88\cdot 10^{-10}$.}\label{result_lin_noise}
\end{center}
  \end{figure}
 
  \begin{remark}
 The linearized monotonicity method converges theoretically rigorously, but in practice delivers poorer reconstructions
 even for small noise (see Figure \ref{result_lin_noise}, where the two inclusions are not separated)
 than the theoretically unproven heuristic one-step linearization (see Figure \ref{result_one_step_noise},
 where the two inclusions are separated).
 Thus, we improve the standard one-step 
 linearization method by combining it with the monotonicity method without losing the convergence results.
 \end{remark}

 \subsection{Monotonicity-based Regularization}
 \hspace{-0.2cm} We assume again
 that the background $(\lambda_0,\mu_0)$ is homogeneous and that the contrasts of the anomalies
 \mbox{$(\gamma^{\lambda},\gamma^{\mu})^T\in L^\infty_+(\mathcal{D})^2$} with
 \begin{align*}
 \begin{pmatrix}
  \lambda (x)\\
  \mu(x)
 \end{pmatrix}
 =
 \begin{pmatrix}
  \lambda_0+\gamma^{\lambda}(x)\chi_\mathcal{D}(x)\\
  \mu_0+\gamma^{\mu}(x)\chi_\mathcal{D}(x)
 \end{pmatrix},
 \end{align*}
 \noindent
are bounded for all $x\in\mathcal{D}$ (a.e.) via
 \begin{align*}
 c^\lambda \leq \gamma^\lambda(x)\leq C^\lambda\quad\text{and}\quad
 c^\mu\leq \gamma^\mu(x)\leq C^\mu,
 \end{align*}
 \noindent 
 with $c^\lambda$, $C^\lambda$, $c^\mu$, $C^\mu\geq 0$.
 $\mathcal{D}$ is an open set denoting the anomalies and the parameters $\lambda_0,\mu_0, c^\lambda,c^\mu,C^\lambda$ and $C^\mu$ are assumed to be known. {\color{black} In addition, we want to remark that $\Omega\setminus \mathcal{D}$ has to be connected.}
{\color{black}
In doing so, we can also handle more general Lam\'e parameters and not only piecewise constant parameters as in 
the previous section.
}
 \\
\\
 {\color{black}
Here, we focus
 on the case $\lambda\geq \lambda_0$, $\mu\geq \mu_0$, while the case $\lambda\leq \lambda_0$, $\mu\leq \mu_0$
 can be found in the Appendix.
 \\
 \\
 Similar as in the one-step linearization method, we make the piecewise constant ansatz (\ref{nu_kappa}) in order 
 to approximate $(\gamma^{\lambda},\gamma^{\mu})$ by $(\kappa,\nu).$
 \\
 \\
 The main idea of monotonicity-based regularization is to minimize the residual of the linearized problem, i.e.,
  \begin{align}\label{min_prob_mon_based}
 \left\Vert \left(\bf S^{\lambda}\,\vert\,\, \bf S^{\mu}\right) 
 \begin{pmatrix}
 \boldsymbol\kappa\\
 \boldsymbol\nu
 \end{pmatrix} 
 - \bf V\right\Vert^2_{2} \to \mathrm{min!}
 \end{align}
 with constraints on $(\boldsymbol{\kappa},\boldsymbol{\nu})$ that are obtained from the monotonicity
 properties introduced in Lemma \ref{mono} and \ref{mono_2}.
Our aim is to rewrite the minimization problem (\ref{min_prob_mon_based}) for the case 
$\mu_0\neq \mu,\lambda_0\neq \lambda$ in $\mathcal{D}$
in order to be able to reconstruct the inclusions
also with respect to $\lambda$. 
Our intention is to force that both Lam\'{e} parameters $\mu(x)$ and $\lambda(x)$
take the same shape but different scale.
\\
\\
In more detail, we define the quantities $a_{\max}$ and $\tau$ as
 \begin{align}\label{def_a}
 a_{\max}&:=\mu_0-\frac{\mu_0^2}{\mu_0+c^\mu},\\
\tau&:=\frac{1}{a_{\max}}\left(\lambda_0-\frac{\lambda_0^2}{\lambda_0+c^\lambda}\right),\label{def_tau}
 \end{align}
\noindent
such that
\begin{align}\label{cond_a}
-2\left(\mu_0-\frac{\mu_0^2}{\mu}\right)+2a&\leq 0,\\
-\left(\lambda_0-\frac{\lambda_0^2}{\lambda}\right)+\tau a&\leq 0\label{cond_tau}
\end{align}
\noindent
for all $0\leq a\leq a_{\max}$.
\\
\\ 
In addition, we set the residual $r(\nu)$ as
 \begin{align*}
 r(\nu):=\Lambda(\lambda,\mu)-\Lambda(\lambda_0,\mu_0)-\Lambda^\prime(\lambda_0,\mu_0)(\tau \nu,\nu)
 \end{align*}
 \noindent
and the components of the corresponding matrix ${\bf R}(\nu)$ are given by 
\begin{align*}
({\bf R}(\nu))_{i,j=1,\ldots M}:=\left(\langle g_i,r(\nu)g_j\rangle\right)_{i,j=1,\ldots M}.
\end{align*}
 \noindent
{\color{black} We want to remark, that we use the same boundary loads $g_i$, $i=1,\ldots, M,$ as in Section 2.}
\\
\\
 Finally, we introduce the set
 \begin{align*}
 \mathcal{C}:=\bigg\lbrace \nu\in L^\infty_+(\Omega):
\nu&=\sum_{k=1}^p a_k \chi_{k},\,a_k\in\mathbb{R},\,0\leq a_k\leq\min(a_{\max},\beta_k)\bigg\rbrace
 \end{align*}
 \noindent 
with
\begin{align}\label{def_beta}
\beta_k:=\mathrm{max}\left\lbrace a\geq 0:\, \Lambda(\lambda,\mu)-\Lambda(\lambda_0,\mu_0)\leq \Lambda^\prime(\lambda_0,\mu_0)
(\tau a\chi_k,a\chi_k)\right\rbrace,
\end{align}
 {\color{black} 
 \noindent
where we set by $\chi_k:=\chi_{\mathcal{B}_k}$. 
\\
\\
{\color{black}
Note that the set on the right hand side of (\ref{def_beta}) is non-empty since it contains the value zero by Corollary 
\ref{monotonicity} and our assumptions $\lambda\geq \lambda_0, \mu\geq \mu_0$.}
\\
\\
 Then, we modify the original minimization problem (\ref{min_prob_mon_based}) to
 }
 \begin{align*}
 \min_{{\nu}\in\mathcal{C}}\Vert {\bf R}(\nu) \Vert_F. 
 \end{align*}
 
 \begin{remark}
We want to remark that $\beta_k$ is defined  via the infinite-dimensional Neumann-to-Dirichlet operator
$\Lambda(\lambda,\mu)$
and does not involve the finite dimensional matrix $\bf R$. 
For the numerical realization we will require a discrete version $\tilde{\beta}_k$ of $\beta_k$
introduced later on.
\end{remark}
}

 \subsubsection{Main Results}
 
{\color{black}
In the following we present our main results and will show that the choices of the quantities
$a_{\mathrm{max}}$ and $\tau$ will lead the correct reconstruction of the support of 
$\mu(x)$ and $\nu(x)$, which we introduced in (\ref{def_a}) and (\ref{def_tau}), respectively, based
on the lower bounds from the monotonicity tests as stated in (\ref{cond_a}) and (\ref{cond_tau}).
}
\\
 \begin{theorem}\label{theo_min_prob_+}
 Consider the minimization problem
 \begin{align}\label{min_prob_+}
\min_{\nu\in\mathcal{C}}\Vert {\bf R}(\nu)\Vert_F.
 \end{align}
 \noindent
 The following statements hold true:
 \begin{itemize}
 \item[(i)] Problem (\ref{min_prob_+}) admits a unique minimizer $\hat{\nu}$.
 \item[(ii)] $\mathrm{supp}(\hat{\nu})$ and $\mathcal{D}$ agree up to the pixel partition, i.e. for any pixel $\mathcal{B}_k$ 
 \begin{align*}
 \mathcal{B}_k\subset\mathrm{supp}(\hat{\nu})\quad\text{if and only if}\quad \mathcal{B}_k\subset \mathcal{D}.
 \end{align*}
 \noindent
 {\color{black}
 Moreover, 
 \begin{align*}
\hat{\nu}=\sum_{\mathcal{B}_k\subseteq \mathcal{D}} a_{\max}\chi_{k}.
 \end{align*}
 }
 \end{itemize}
 \end{theorem}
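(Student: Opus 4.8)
The plan is to recast (\ref{min_prob_+}) as a convex quadratic program over a box and then use the monotonicity relations to pin the minimizer to an explicit vertex. Writing $\nu=\sum_k a_k\chi_k$, set $P_k:=\Lambda'(\lambda_0,\mu_0)(\tau\chi_k,\chi_k)$ and $D:=\Lambda(\lambda,\mu)-\Lambda(\lambda_0,\mu_0)$. Linearity of the Fr\'echet derivative in its direction gives $r(\nu)=D-\sum_k a_k P_k$, so $\mathbf{R}(\nu)$ depends affinely on $\mathbf{a}=(a_k)$ and $\|\mathbf{R}(\nu)\|_F^2$ is a convex quadratic in $\mathbf{a}$. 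By Corollary \ref{monotonicity} we have $D\le 0$, and from the sign in the bilinear form of $\Lambda'(\lambda_0,\mu_0)$ each $P_k\le 0$ as a quadratic form, so each matrix $\mathbf{P}_k:=(\langle g_i,P_k g_j\rangle)_{ij}$ is negative semidefinite. I first record that the set in (\ref{def_beta}) is a closed interval $[0,\beta_k]$: the condition $D\le a P_k$ means $\langle g,(D-aP_k)g\rangle\le 0$ for all $g$, which is monotone in $a$ because $P_k\le 0$, so if it fails for one $a_0$ it fails for every larger $a$.

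Next I would pin down the constraint box. For an interior pixel $\mathcal{B}_k\subseteq\mathcal{D}$ I claim $\beta_k\ge a_{\max}$; this is the heart of the choice of $a_{\max}$ and $\tau$. Applying Lemma \ref{mono_2} with $(\lambda_1,\mu_1)=(\lambda,\mu)$, $(\lambda_2,\mu_2)=(\lambda_0,\mu_0)$ bounds $\langle g,Dg\rangle$ above by $-\int_\Omega 2(\mu_0-\tfrac{\mu_0^2}{\mu})|\hat{\nabla}u_0|^2+(\lambda_0-\tfrac{\lambda_0^2}{\lambda})(\nabla\cdot u_0)^2\,dx$, whose integrand vanishes off $\mathcal{D}$ and, by (\ref{cond_a})--(\ref{cond_tau}) together with $\mu\ge\mu_0+c^\mu$, $\lambda\ge\lambda_0+c^\lambda$ on $\mathcal{D}$, dominates $2a_{\max}|\hat{\nabla}u_0|^2+\tau a_{\max}(\nabla\cdot u_0)^2$ there; discarding all but the $\mathcal{B}_k$ part yields $D\le a_{\max}P_k$, i.e. $\beta_k\ge a_{\max}$, so $\min(a_{\max},\beta_k)=a_{\max}$. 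For a pixel with $\mathcal{B}_k\not\subseteq\mathcal{D}$, the localized-potentials argument underlying the linearized monotonicity test (Corollary \ref{lin_mon_1}) shows $D\not\le aP_k$ for every admissible small $a>0$; combined with the monotonicity in $a$ above this forces $\beta_k=0$, hence $a_k=0$. Thus $\mathcal{C}$ is exactly the box $[0,a_{\max}]$ on interior pixels and $\{0\}$ elsewhere.

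Then I fix the sign of the residual on all of $\mathcal{C}$. Summing the interior estimate over $\tilde{\mathcal{D}}:=\bigcup_{\mathcal{B}_k\subseteq\mathcal{D}}\mathcal{B}_k$ gives $D\le\sum_{\mathcal{B}_k\subseteq\mathcal{D}}a_{\max}P_k$, and since $a_k\le a_{\max}$ and $P_k\le 0$ imply $\sum_k a_k P_k\ge\sum_k a_{\max}P_k\ge D$, we get $r(\nu)\le 0$, i.e. $-\mathbf{R}(\nu)\ge 0$ for every $\nu\in\mathcal{C}$. With $\nu^*=\sum_{\mathcal{B}_k\subseteq\mathcal{D}}a_{\max}\chi_k$, any $\nu\in\mathcal{C}$ satisfies $-\mathbf{R}(\nu)=-\mathbf{R}(\nu^*)+\Delta_\nu$ where $\Delta_\nu=\sum_{\mathcal{B}_k\subseteq\mathcal{D}}(a_{\max}-a_k)(-\mathbf{P}_k)\ge 0$. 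The identity $\|X+\Delta\|_F^2=\|X\|_F^2+2\operatorname{tr}(X\Delta)+\|\Delta\|_F^2$ for symmetric positive semidefinite $X,\Delta$ gives $\|\mathbf{R}(\nu)\|_F\ge\|\mathbf{R}(\nu^*)\|_F$ with equality only if $\Delta_\nu=0$. Being a nonnegative combination of the psd matrices $-\mathbf{P}_k$, $\Delta_\nu$ vanishes iff each term does, and the mild nondegeneracy $\mathbf{P}_k\ne 0$ on interior pixels (a background displacement with nonzero load cannot have vanishing strain on the open pixel $\mathcal{B}_k$) forces $a_k=a_{\max}$ for all interior $k$. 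Hence $\hat{\nu}=\nu^*$ is the unique minimizer, proving (i); since $a_k>0$ exactly when $\mathcal{B}_k\subseteq\mathcal{D}$, the support statement and the explicit form in (ii) follow at once.

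The main obstacle is the outside-pixel claim $\beta_k=0$: it is the only step invoking the nontrivial converse direction of the monotonicity test, and unlike Corollary \ref{lin_mon_1} (stated for constant contrasts) it must be run for the general contrasts $c^\lambda\le\gamma^\lambda\le C^\lambda$, $c^\mu\le\gamma^\mu\le C^\mu$ and for pixels meeting $\mathcal{D}$ only partially. I would handle this with the localized-potentials construction of \cite{Eberle_mon_test}, concentrating the background strain on the positive-measure set $\mathcal{B}_k\setminus\mathcal{D}$ on which the contrast vanishes, so that $\langle g,Dg\rangle$ stays controlled while $\langle g,P_k g\rangle$ is driven arbitrarily negative, defeating $D\le aP_k$ for every $a>0$.
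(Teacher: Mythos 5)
Your proposal is correct, and its monotonicity core coincides with the paper's: the interior estimate via Lemma \ref{mono_2} with the choices (\ref{cond_a})--(\ref{cond_tau}) (yielding $\beta_k\geq a_{\max}$ whenever $\mathcal{B}_k\subseteq\mathcal{D}$), and the converse via the linearized monotonicity inequality of Lemma \ref{mono} combined with localized potentials (yielding $\beta_k=0$ otherwise) --- this is precisely the content of the paper's Lemma \ref{pixel_+} together with \emph{Step 1} of its proof of the theorem, including the sign result $r(\nu)\leq 0$ on all of $\mathcal{C}$. Where you genuinely diverge is the optimality/uniqueness step. The paper argues locally and by contradiction: if $\hat{a}_k<\min(a_{\max},\beta_k)$ on some pixel, it perturbs by $h^\nu\chi_k$, invokes the positive definiteness of ${\bf S}_k^\tau$ (Lemma \ref{pos_S}) and Weyl's inequalities to show that every eigenvalue of the negative semidefinite matrix ${\bf R}$ strictly increases, so the Frobenius norm strictly drops. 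You instead decompose globally, $-{\bf R}(\nu)=-{\bf R}(\nu^*)+\Delta_\nu$ with $\Delta_\nu=\sum_{\mathcal{B}_k\subseteq\mathcal{D}}(a_{\max}-a_k)\,{\bf S}_k^\tau$ positive semidefinite, and use the identity $\Vert X+\Delta\Vert_F^2=\Vert X\Vert_F^2+2\operatorname{tr}(X\Delta)+\Vert\Delta\Vert_F^2$ together with $\operatorname{tr}(X\Delta)\geq 0$ for positive semidefinite $X,\Delta$ (here $X=-{\bf R}(\nu^*)\succeq 0$ is exactly your residual-sign result). This buys three things: it is more elementary (no eigenvalue machinery), it exhibits the minimizer $\nu^*=\sum_{\mathcal{B}_k\subseteq\mathcal{D}}a_{\max}\chi_k$ directly rather than obtaining existence from compactness and the explicit form by contradiction, and it needs only ${\bf S}_k^\tau\neq 0$ rather than positive definiteness --- though both nondegeneracy statements rest on the same unique-continuation input. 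Two minor points of care: your phrase ``summing the interior estimate over $\tilde{\mathcal{D}}$'' must be read as re-running the integral estimate of Lemma \ref{mono_2} while keeping all pixels $\mathcal{B}_k\subseteq\mathcal{D}$ simultaneously (per-pixel operator inequalities $D\leq a_{\max}P_k$ cannot literally be added), and the equality analysis of the sum $\Delta_\nu$ uses that a sum of positive semidefinite matrices vanishes only if each summand does; both are consistent with what you wrote.
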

 \noindent
 {\color{black}
 Now we deal with noisy data and introduce the corresponding residual
 \begin{align}\label{resid_noise}
 r_\delta(\nu):=\Lambda^\delta(\lambda,\mu)-\Lambda(\lambda_0,\mu_0)-\Lambda^\prime(\lambda_0,\mu_0)(\tau \nu,\nu). 
 \end{align}
 Based on this, ${\bf R}_\delta(\nu)$ represents the matrix 
 $(\langle g_i,r_\delta(\nu)g_j\rangle)_{i,j=1,\ldots M}$.
 \\
 \\
\noindent
Further on, the admissible set for noisy data is defined by
}
 \begin{align*}
 \mathcal{C}_\delta:=\bigg\lbrace \nu\in L^\infty_+(\Omega):
\nu&=\sum_{k=1}^p a_k \chi_{k},\,a_k\in\mathbb{R},\,0\leq a_k\leq\min(a_{\max},\beta_{k,\delta})\bigg\rbrace
 \end{align*}
 \noindent
 with
\begin{align}\label{beta_delta}
\beta_{k,\delta}:=\mathrm{max}\left\lbrace a\geq 0:\, \Lambda^{\delta}(\lambda,\mu)-\Lambda(\lambda_0,\mu_0)-\delta I\leq \Lambda^\prime(\lambda_0,\mu_0)
(\tau a\chi_k,a\chi_k)\right\rbrace.
\end{align}
\noindent
Thus, we present the following stability result.
 {\color{black}
 \begin{theorem}\label{theo_conv_+}
 Consider the minimization problem
 \begin{align}\label{min_prob_delta_+}
\min_{\nu\in\mathcal{C}_\delta}\Vert {\bf R}_\delta(\nu)\Vert_F.
 \end{align}
 \noindent
 The following statements hold true:
 \begin{itemize}
 \item[(i)] Problem (\ref{min_prob_delta_+}) admits a minimizer.
 \item[(ii)] 
 {\color{black}Let $\hat{\nu}=\sum\limits_{\mathcal{B}_k\subseteq \mathcal{D}}a_{\max}\chi_{k}$
 be the minimizer (\ref{min_prob_+}) and 
$\hat{\nu}_\delta=\sum\limits_{k=1}^p a_{k,\delta}\chi_{k}$
 of problem (\ref{min_prob_delta_+}), respectively. Then $\hat{\nu}_\delta$ converges pointwise and uniformly to 
 $\hat{\nu}$ as $\delta$ goes to $0$.
 }
 \end{itemize}
 \end{theorem}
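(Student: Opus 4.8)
The plan is to reduce everything to the finite-dimensional parametrization $\nu=\sum_{k=1}^{p}a_k\chi_k\leftrightarrow(a_1,\dots,a_p)$ and to read (\ref{min_prob_delta_+}) as the minimization of the continuous (in fact convex) functional $J_\delta(\nu):=\Vert{\bf R}_\delta(\nu)\Vert_F$ over the box $\mathcal{C}_\delta$, which I then compare with the limit functional $J(\nu):=\Vert{\bf R}(\nu)\Vert_F$ over $\mathcal{C}$. For (i) I would first note that $0\in\mathcal{C}_\delta$: since $\Lambda^\delta(\lambda,\mu)-\Lambda(\lambda_0,\mu_0)-\delta I\le\Lambda(\lambda,\mu)-\Lambda(\lambda_0,\mu_0)\le 0$ as quadratic forms (using $\Vert\Lambda^\delta(\lambda,\mu)-\Lambda(\lambda,\mu)\Vert\le\delta$, hence $\Lambda^\delta-\Lambda\le\delta I$, together with Corollary~\ref{monotonicity}), the value $a=0$ belongs to the set defining each $\beta_{k,\delta}$ in (\ref{beta_delta}), so $\beta_{k,\delta}\ge 0$. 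Because $0\le a_k\le\min(a_{\max},\beta_{k,\delta})\le a_{\max}$, the set $\mathcal{C}_\delta$ is a compact box inside $\prod_{k}[0,a_{\max}]$ and $J_\delta$, being the Frobenius norm of a map that is affine in $\nu$ through the linear term $\Lambda'(\lambda_0,\mu_0)(\tau\nu,\nu)$, is continuous; hence a minimizer exists.

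Next I would establish three facts driving (ii). First, \emph{monotone inclusion of the feasible sets}: the same estimate $\Lambda^\delta(\lambda,\mu)-\Lambda(\lambda_0,\mu_0)-\delta I\le\Lambda(\lambda,\mu)-\Lambda(\lambda_0,\mu_0)$ shows every $a$ admissible in (\ref{def_beta}) is admissible in (\ref{beta_delta}), so $\beta_{k,\delta}\ge\beta_k$ and therefore $\mathcal{C}\subseteq\mathcal{C}_\delta$. Second, \emph{uniform convergence of the objective}: the entries of ${\bf R}_\delta(\nu)-{\bf R}(\nu)$ are $\langle g_i,(\Lambda^\delta(\lambda,\mu)-\Lambda(\lambda,\mu))g_j\rangle$, each bounded in modulus by $\delta$ for the normalized loads, whence $\Vert{\bf R}_\delta(\nu)-{\bf R}(\nu)\Vert_F\le M\delta$ and $|J_\delta(\nu)-J(\nu)|\le M\delta$ uniformly in $\nu$. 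Third, \emph{convergence of the bounds} $\beta_{k,\delta}\to\beta_k$: the lower bound is the first fact, and for the upper bound I take $\delta_n\to 0$ with $\beta_{k,\delta_n}\to\limsup_\delta\beta_{k,\delta}=:L$, use $\Lambda^{\delta_n}(\lambda,\mu)\to\Lambda(\lambda,\mu)$ and the closedness of the quadratic-form inequality to pass to the limit in (\ref{beta_delta}), obtaining $\Lambda(\lambda,\mu)-\Lambda(\lambda_0,\mu_0)\le\Lambda'(\lambda_0,\mu_0)(\tau L\chi_k,L\chi_k)$, i.e. $L\le\beta_k$. In particular, for $\mathcal{B}_k\subseteq\mathcal{D}$ the bound stays $\min(a_{\max},\beta_{k,\delta})=a_{\max}$, since the exact minimizer of Theorem~\ref{theo_min_prob_+} forces $\beta_k\ge a_{\max}$ and thus $\beta_{k,\delta}\ge a_{\max}$, while for $\mathcal{B}_k\not\subseteq\mathcal{D}$ the linearized monotonicity test (Corollary~\ref{lin_mon_1}), applicable because (\ref{cond_a})--(\ref{cond_tau}) make $(\tau a,a)$ an admissible test direction for every $0<a\le a_{\max}$, gives $\beta_k=0$, hence $\beta_{k,\delta}\to 0$.

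For (ii) the argument then splits. For every exterior pixel ($\mathcal{B}_k\not\subseteq\mathcal{D}$) the constraint alone squeezes $0\le a_{k,\delta}\le\beta_{k,\delta}\to 0$, so $a_{k,\delta}\to 0=\hat{a}_k$ without invoking optimality. For the interior pixels I use compactness: all minimizers $\hat{\nu}_\delta$ lie in the fixed compact box $\prod_k[0,a_{\max}]$, so any sequence $\delta_n\to 0$ has a subsequence with $\hat{\nu}_{\delta_n}\to\bar{\nu}$; its exterior coordinates vanish, so $\bar{\nu}\in\mathcal{C}$. Given an arbitrary $\nu\in\mathcal{C}\subseteq\mathcal{C}_{\delta_n}$, optimality $J_{\delta_n}(\hat{\nu}_{\delta_n})\le J_{\delta_n}(\nu)$ combined with the uniform convergence of the objectives and continuity of $J$ yields $J(\bar{\nu})\le J(\nu)$ in the limit. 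Thus $\bar{\nu}$ minimizes $J$ over $\mathcal{C}$, and the uniqueness in Theorem~\ref{theo_min_prob_+} forces $\bar{\nu}=\hat{\nu}$. Since every subsequential limit equals $\hat{\nu}$, the whole family converges, $\hat{\nu}_\delta\to\hat{\nu}$, which in finite dimensions is exactly the asserted pointwise and uniform convergence.

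The main obstacle is the third fact, $\beta_{k,\delta}\to\beta_k$, because this is the step where the noise perturbs the \emph{feasible set} rather than merely the objective. The delicate points are the bookkeeping between the infinite-dimensional operator inequalities defining $\beta_k$ and $\beta_{k,\delta}$ and the noise bound $\Vert\Lambda^\delta-\Lambda\Vert\le\delta$ read as a quadratic-form estimate, and the exclusion of the degenerate case $L=\infty$: if $\beta_{k,\delta_n}\to\infty$, dividing the limiting inequality by $\beta_{k,\delta_n}$ forces $\Lambda'(\lambda_0,\mu_0)(\tau\chi_k,\chi_k)=0$, i.e. an undetectable pixel with $\beta_k=\infty$, which is precisely ruled out for $\mathcal{B}_k\not\subseteq\mathcal{D}$ by the monotonicity test giving $\beta_k=0$. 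Everything else---existence, the inclusion $\mathcal{C}\subseteq\mathcal{C}_\delta$, and the identification of subsequential limits---is routine once the uniqueness already established in Theorem~\ref{theo_min_prob_+} is available.
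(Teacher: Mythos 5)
Your proposal is correct, and its skeleton coincides with the paper's proof: existence via continuity of $\nu\mapsto\Vert {\bf R}_\delta(\nu)\Vert_F$ on the compact box $\mathcal{C}_\delta$; the feasible-set inclusion $\mathcal{C}\subseteq\mathcal{C}_\delta$ (your first fact is exactly Lemma \ref{lem_delta_+}); extraction of a convergent subsequence; passage to the limit in the optimality comparison against $\hat{\nu}$; identification of the limit via the uniqueness part of Theorem \ref{theo_min_prob_+}; and the sub-subsequence trick for convergence of the whole family. The one place you genuinely deviate is how feasibility of the subsequential limit is established. The paper proves $a_k\le\beta_k$ for every pixel by passing to the limit in a quadratic-form inequality involving the operator absolute value, using $\vert V^\delta\vert\to\vert V\vert$ in operator norm; you instead prove convergence of the constraint bounds themselves ($\liminf_\delta\beta_{k,\delta}\ge\beta_k$ from the noise estimate, $\limsup_\delta\beta_{k,\delta}\le\beta_k$ by closedness of the quadratic-form inequality under norm limits, with the degenerate case $L=\infty$ excluded), and then split pixels: exterior ones satisfy $0\le a_{k,\delta}\le\beta_{k,\delta}\to 0$ with no appeal to optimality, interior ones satisfy $a_{k,\delta}\le a_{\max}\le\beta_k$ trivially. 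This variant avoids the $\vert V^\delta\vert$ machinery entirely and yields the nice by-product that the exterior coefficients vanish by the constraints alone. Two repairs are needed to make it airtight: (a) for the claim $\beta_k=0$ on exterior pixels you should invoke Lemma \ref{pixel_+}, which is proved exactly in the setting of the regularization section (variable contrasts bounded by $c^\lambda,C^\lambda,c^\mu,C^\mu$), rather than Corollary \ref{lin_mon_1}, which is stated only for two-phase piecewise-constant Lam\'e parameters and so does not literally apply here; (b) reading $\Vert\Lambda^\delta(\lambda,\mu)-\Lambda(\lambda,\mu)\Vert\le\delta$ as the form inequality $\Lambda^\delta(\lambda,\mu)-\Lambda(\lambda,\mu)\le\delta I$ requires the noise to be self-adjoint, which the paper arranges by the symmetrization $V^\delta\mapsto\tfrac{1}{2}\bigl(V^\delta+(V^\delta)^{*}\bigr)$; this hypothesis should be stated where you first use it.
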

 }
 \noindent
 \\
 {\color{black}
 \begin{remark}
 In \cite{Eberle_mon_test}, we used monotonicity methods to solve the inverse problem of shape reconstruction.
 In Theorem 1 and Theorem 2, we applied the same monotonicity methods to construct constraints for the 
 residual based inversion technique. Both methods have a rigorously proven convergence theory, however
 the monotonicity-based regularization approach turns out to be more stable regarding noise.
 \end{remark}
 }
 
 \subsubsection{Theoretical Background}
In order to prove Theorem \ref{theo_min_prob_+} as well as Theorem \ref{theo_conv_+}, we have to take
a look at the following.
\\
 \begin{lemma}\label{pixel_+}
{\color{black}Let $a_{\mathrm{max}}$ and $\tau$ be defined as in {\color{black}(\ref{def_a}) and (\ref{def_tau})}, respectively,
$\lambda,\mu\in L^\infty_+(\Omega)$ and we assume that $\lambda\geq\lambda_0$, $\mu\geq \mu_0$,
where $\lambda_0,\mu_0$ are constant.
Then we have for any pixel $\mathcal{B}_k$, $\mathcal{B}_k\subseteq \mathcal{D}$ if and only if 
 $\beta_{k}>0$, where $\beta_k$ is defined in (\ref{def_beta})}.
 \end{lemma}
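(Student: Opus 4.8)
The plan is to rewrite the operator inequality in the definition (\ref{def_beta}) of $\beta_k$ as a family of scalar inequalities and then to trap the quantity $\langle g,(\Lambda(\lambda_0,\mu_0)-\Lambda(\lambda,\mu))g\rangle$ between the two monotonicity bounds. Abbreviating $u_0:=u^g_{(\lambda_0,\mu_0)}$ and inserting the bilinear form of $\Lambda'(\lambda_0,\mu_0)$ with directions $(\tau a\chi_k,a\chi_k)$, the condition $\Lambda(\lambda,\mu)-\Lambda(\lambda_0,\mu_0)\leq\Lambda'(\lambda_0,\mu_0)(\tau a\chi_k,a\chi_k)$ is, in the sense of quadratic forms, equivalent to
\[
\langle g,(\Lambda(\lambda_0,\mu_0)-\Lambda(\lambda,\mu))g\rangle\geq\int_{\mathcal{B}_k}2a\,\hat{\nabla}u_0:\hat{\nabla}u_0+\tau a\,\nabla\cdot u_0\,\nabla\cdot u_0\,dx\qquad\text{for all }g .
\]
Thus $\beta_k$ is the largest $a\geq 0$ for which this holds for every boundary load, and in particular $\beta_k\geq 0$. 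I would first record the two estimates from Lemma \ref{mono_2} and Lemma \ref{mono}, specialized to the pair $(\lambda,\mu)$, $(\lambda_0,\mu_0)$; since $\mu=\mu_0$ and $\lambda=\lambda_0$ outside $\mathcal{D}$, both reduce to integrals over $\mathcal{D}$.

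For the implication $\mathcal{B}_k\subseteq\mathcal{D}\Rightarrow\beta_k>0$, I use the lower bound from Lemma \ref{mono_2},
\[
\langle g,(\Lambda(\lambda_0,\mu_0)-\Lambda(\lambda,\mu))g\rangle\geq\int_{\mathcal{D}}2\Big(\mu_0-\frac{\mu_0^2}{\mu}\Big)\hat{\nabla}u_0:\hat{\nabla}u_0+\Big(\lambda_0-\frac{\lambda_0^2}{\lambda}\Big)\nabla\cdot u_0\,\nabla\cdot u_0\,dx .
\]
On $\mathcal{D}$ we have $\mu\geq\mu_0+c^\mu$ and $\lambda\geq\lambda_0+c^\lambda$, so by the defining relations (\ref{cond_a}) and (\ref{cond_tau}) of $a_{\max}$ and $\tau$ the integrand dominates $2a_{\max}\,\hat{\nabla}u_0:\hat{\nabla}u_0+\tau a_{\max}\,\nabla\cdot u_0\,\nabla\cdot u_0$ pointwise on $\mathcal{D}$. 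Because this last integrand is non-negative and $\mathcal{B}_k\subseteq\mathcal{D}$, shrinking the domain of integration to $\mathcal{B}_k$ only decreases the integral; hence the displayed $\beta_k$-inequality holds with $a=a_{\max}$ for every $g$, giving $\beta_k\geq a_{\max}>0$.

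For the converse I argue by contraposition: assuming $\mathcal{B}_k\not\subseteq\mathcal{D}$ I show $\beta_k=0$. If $\beta_k>0$, the first display holds for some fixed $a>0$ and all $g$; combining it with the upper bound from Lemma \ref{mono} and discarding non-negative terms yields
\[
\int_{\mathcal{D}}2(\mu-\mu_0)\hat{\nabla}u_0:\hat{\nabla}u_0+(\lambda-\lambda_0)\nabla\cdot u_0\,\nabla\cdot u_0\,dx\geq 2a\int_{B}\hat{\nabla}u_0:\hat{\nabla}u_0\,dx
\]
for all $g$, where $B$ is a ball with $\overline{B}\subseteq\mathcal{B}_k\setminus\overline{\mathcal{D}}$, which exists since $\mathcal{B}_k$ is open and not contained in $\overline{\mathcal{D}}$. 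As $\mu-\mu_0\leq C^\mu$ and $\lambda-\lambda_0\leq C^\lambda$ are bounded, the left-hand side is controlled by the energy of $u_0$ on $\mathcal{D}$. The main obstacle is then to invoke the localized potentials for linear elasticity from \cite{Eberle_mon_test}: since $\mathcal{D}$ and $B$ are disjoint and $\Omega\setminus\mathcal{D}$ is connected (so $B$ can be reached from $\partial\Omega$ through the background), there is a sequence of boundary loads $g_n$ whose solutions $u_0^{(n)}$ satisfy $\int_{B}\hat{\nabla}u_0^{(n)}:\hat{\nabla}u_0^{(n)}\,dx\to\infty$ while $\int_{\mathcal{D}}\hat{\nabla}u_0^{(n)}:\hat{\nabla}u_0^{(n)}\,dx\to 0$ and $\int_{\mathcal{D}}\nabla\cdot u_0^{(n)}\,\nabla\cdot u_0^{(n)}\,dx\to 0$. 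Feeding $g_n$ into the inequality forces a vanishing left-hand side to dominate a right-hand side tending to $+\infty$, a contradiction. Hence no $a>0$ is admissible and $\beta_k=0$. I expect the verification that the elasticity localized-potentials result genuinely applies to the pair $(B,\mathcal{D})$ — in particular the reachability of $B$ from the boundary secured by the outer-support and connectedness hypotheses — to be the delicate point, the rest being the bookkeeping above.
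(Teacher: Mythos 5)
Your proof is correct and follows essentially the same route as the paper's: the forward implication via Lemma \ref{mono_2} combined with the defining properties (\ref{cond_a})--(\ref{cond_tau}) of $a_{\max}$ and $\tau$, and the converse by playing the definition (\ref{def_beta}) of $\beta_k$ against Lemma \ref{mono}, the upper bounds $C^\lambda$, $C^\mu$, and a localized-potentials contradiction. The only differences are cosmetic: you write the operator inequalities as scalar quadratic-form inequalities instead of multiplying by $-1$, and you concentrate the localized potentials on a ball $B\subseteq\mathcal{B}_k\setminus\overline{\mathcal{D}}$ rather than stating the blow-up on all of $\mathcal{B}_k$, which is precisely how the construction from \cite{Eberle_mon_test} that the paper invokes is actually applied.
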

 \begin{proof}
 We adopt the proof of Lemma 3.4 from \cite{harrach2016enhancing}.
 \\
 \\
 \noindent
 {\it{Step 1:}} {\color{black} First, we verify that from $\mathcal{B}_k\subseteq \mathcal{D}$ it follows that
$\beta_{k}>0$.}
\\
 In fact, by applying the monotonicity principle (\ref{mono_lame}) {\color{black} multiplied by $-1$}
 \noindent
for
 \begin{align*}
\lambda_1:=\lambda ,\mu_1:=\mu \,\,\,\mathrm{and}\,\,\,
{\color{black} \lambda_2:=\lambda_0,\mu_2:=\mu_0,}
 \end{align*}
 \noindent
 we end up with the following inequalities
for all pixel $\mathcal{B}_k$, all $ a\in[0,a_{\max}]$ and all $g\in L^2(\Gamma_{\textup{N}})^d$
 \begin{align*}
 &\langle g,\left(\Lambda(\lambda,\mu)-\Lambda(\lambda_0,\mu_0)-
 \Lambda^{\prime}(\lambda_0,\mu_0)(\tau a\chi_{k},a\chi_{k})\right)g\rangle\\
 \leq& -\int_{\Omega} 2\left(\mu_0-\frac{\mu_0^2}{\mu}\right)\hat{\nabla}u_0^{g}:\hat{\nabla}u_0^{g}{\color{black} \,dx}+
 {\color{black} \int_{\Omega}}2 a\chi_{k}\hat{\nabla}u_0^{g}:\hat{\nabla}u_0^{g}\,dx\\
 &-\int_{\Omega}\left(\lambda_0-\frac{\lambda_0^2}{\lambda}\right)\nabla\cdot u_0^{g} \nabla \cdot u_0^{g}\,dx
 +{\color{black} \int_{\Omega}}\tau a\chi_{k}\nabla\cdot u_0^{g} \nabla \cdot u_0^{g}\,dx\\
 \leq& -\int_{\mathcal{D}} 2\left(\mu_0-\frac{\mu_0^2}{\mu}\right)\hat{\nabla}u_0^{g}:\hat{\nabla}u_0^{g}\,dx
 +\int_{\mathcal{B}_k}2 a_{\max}\hat{\nabla}u_0^{g}:\hat{\nabla}u_0^{g}\,dx\\
 &-\int_{\mathcal{D}}\left(\lambda_0-\frac{\lambda_0^2}{\lambda}\right)\nabla\cdot u_0^{g} \nabla \cdot u_0^{g}\,dx
 {\color{black}
 +\int_{\mathcal{B}_k}\tau a_{\max}\nabla\cdot u_0^{g} \nabla \cdot u_0^{g}\,dx}\\
  \leq & 0.
 \end{align*}
 \noindent
{\color{black}
In the above inequalities, we used the shorthand notation $u_0^{g}$ for the unique solution $u_{(\lambda_0,\mu_0)}^{g}$.}
 The last inequality holds due to the fact that
 $a_{\max}$ and {\color{black} $\tau$} fulfill
\begin{align*}
-2\left(\mu_0-\frac{\mu_0^2}{\mu}\right)+2a_{\max}&\leq 0,\\
 {\color{black} -\left(\lambda_0-\frac{\lambda_0^2}{\lambda}\right)+\tau a_{\max}}&\leq 0
\end{align*}
 \noindent
 in $\mathcal{D}$ and that $\mathcal{B}_k$ lies inside $\mathcal{D}$.
 \noindent
 \\
 \\
 {\color{black} We want to remark, that compared with the corresponding proof in \cite{harrach2016enhancing},
 this shows us that we require conditions on $a_{\max}$ as well as on $\tau$ (c.f. Equation (\ref{cond_a}) and 
 (\ref{cond_tau})) due to the fact that we
 deal with two unknown parameters ($\lambda$ and $\mu$) instead of one.}
 \noindent
 \\
 \\
  {\it{Step 2:}} 
 {\color{black} In order to prove the other direction of the statement, let $\beta_{k}> 0$.
We will show that $\mathcal{B}_k\subseteq \mathcal{D}$ by contradiction.}
 \\
 Assume that $\mathcal{B}_k\not\subseteq \mathcal{D}$ and
 $\beta_{k}>0$.
 {\color{black}
 Applying the monotonicity principle from Lemma \ref{mono},
 \begin{align*}
 \Lambda(\lambda,\mu)-\Lambda(\lambda_0,\mu_0)\geq \Lambda^{\prime}(\lambda_0,\mu_0)((\lambda,\mu)-(\lambda_0,\mu_0)),
 \end{align*}
 }
 \noindent
{\color{black} with the definition of $\beta_k$ in (\ref{def_beta}), we are led to
\begin{align*}
0&\geq \Lambda(\lambda,\mu)-\Lambda(\lambda_0,\mu_0)-\Lambda^{\prime}(\lambda_0,\mu_0)(\tau\beta_k\chi_k,\beta_k\chi_k)\\
&\geq \Lambda^{\prime}(\lambda_0,\mu_0)((\lambda,\mu)-(\lambda_0,\mu_0))- \Lambda^{\prime}(\lambda_0,\mu_0)(\tau\beta_k\chi_k,\beta_k\chi_k).
\end{align*}
}
 {\color{black} Based on this, we conclude that} for all $g\in L^2(\Gamma_{\textup{N}})^d$
 \begin{align}\label{mono_pixel}
 &\int_{\mathcal{B}_k}\tau \beta_{k}\nabla\cdot u_0^{g} \nabla \cdot u_0^{g}\,dx
 +2\int_{\mathcal{B}_k}\beta_{k}\hat{\nabla} u_0^{g} :\hat{\nabla} u_0^{g}\,dx\\ \nonumber
 &\leq \int_{\Omega}(\lambda-\lambda_0)\nabla\cdot u_0^{g} \nabla \cdot u_0^{g}\,dx
 +2\int_{\Omega}(\mu-\mu_0)\hat{\nabla} u_0^{g} : \hat{\nabla} u_0^{g}\,dx\\ \nonumber
  &\leq \int_{\mathcal{D}} C^{\lambda} \nabla\cdot u_0^{g} \nabla \cdot u_0^{g}\,dx
 +2\int_{\mathcal{D}}C^{\mu}\hat{\nabla}u_0^{g} : \hat{\nabla} u_0^{g}\,dx.
 \end{align}
 \noindent
 On the other hand, {\color{black} using} the localized potentials
 {\color{black}
 in a similar procedure as in the proof of Theorem 2.1 in 
 \cite{Eberle_mon_test},
 }
 we can find a sequence $(g_m)_{m\in\mathbb{N}}\subset L^2(\Gamma_{\textup{N}})^d$ such that the 
 solutions $(u_0^m)_{m\in\mathbb{N}}\subset H^1(\Omega)^d$ of the forward problem
 \noindent
  (when the Lam\'{e} parameter are chosen to be $\lambda_0$, $\mu_0$ and the boundary forces $g=g_m$) fulfill 
 \begin{align*}
 &\lim_{m\to\infty}\int_{\mathcal{B}_k}\hat{\nabla}u_0^{m}:\hat{\nabla}u_0^{m}\,dx=\infty,\quad\,\,\,
 \lim_{m\to\infty}\int_{\mathcal{D}}\hat{\nabla}u_0^{m}:\hat{\nabla}u_0^{m}\,dx=0,\\
 &\lim_{m\to\infty}\int_{\mathcal{B}_k}\nabla\cdot u_0^{m} \nabla\cdot u_0^{m}\,dx=\infty,\quad
 \lim_{m\to\infty}\int_{\mathcal{D}}\nabla\cdot u_0^{m} \nabla\cdot u_0^{m}\,dx=0,
 \end{align*}
 \noindent
 which contradicts (\ref{mono_pixel}).
 \end{proof}
\noindent

{\color{black}
\noindent
 \begin{lemma}\label{pos_S}
 For all pixels $\mathcal{B}_k$, denote by ${\bf S}_k^\tau$ the matrix 
\begin{align*}
 {\bf S}_k^\tau:={\color{black}\left(\langle g_i,-\Lambda^\prime(\lambda_0,\mu_0)(\tau\chi_k,\chi_k)g_j\rangle\right)_{i,j=1,\ldots,M}}.
\end{align*}
\noindent
 Then ${\bf S}_k^\tau$ is a positive definite matrix.
 \end{lemma}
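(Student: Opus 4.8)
The plan is to realize the quadratic form attached to $\mathbf{S}_k^\tau$ as an elastic energy integral localized on the single pixel $\mathcal{B}_k$, and then to show that this energy vanishes only for the zero coefficient vector. First I would make the bilinear form of the Fr\'echet derivative explicit: for $x=(x_1,\dots,x_M)^T\in\mathbb{R}^M$, inserting $(\hat\lambda,\hat\mu)=(\tau\chi_k,\chi_k)$ into the formula for $\langle\Lambda'(\lambda_0,\mu_0)(\hat\lambda,\hat\mu)g,h\rangle$ and using the linearity of the forward map $g\mapsto u^g_{(\lambda_0,\mu_0)}$, I set $w:=u^g_{(\lambda_0,\mu_0)}$ with $g:=\sum_{i=1}^M x_i g_i$ and obtain
\[
x^T\mathbf{S}_k^\tau x=\int_{\mathcal{B}_k}2\,\hat{\nabla} w:\hat{\nabla} w+\tau\,\nabla\cdot w\,\nabla\cdot w\,dx=\int_{\mathcal{B}_k}2\,|\hat{\nabla} w|^2+\tau\,(\nabla\cdot w)^2\,dx.
\]
The matrix is symmetric, either directly from this representation or from the self-adjointness of $\Lambda'(\lambda_0,\mu_0)(\tau\chi_k,\chi_k)$.

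Since $a_{\max}>0$ and $\lambda_0-\frac{\lambda_0^2}{\lambda_0+c^\lambda}=\frac{\lambda_0 c^\lambda}{\lambda_0+c^\lambda}\ge 0$, the definition (\ref{def_tau}) gives $\tau\ge 0$, so the integrand above is nonnegative and $\mathbf{S}_k^\tau$ is positive semidefinite. For strict positivity I would suppose $x^T\mathbf{S}_k^\tau x=0$; as both summands are nonnegative, this forces $\hat{\nabla} w=0$ almost everywhere on the nonempty open set $\mathcal{B}_k$. Note that only the symmetric-gradient term is needed here, so the argument does not depend on the precise value of $\tau$ beyond $\tau\ge 0$.

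It then remains to propagate $\hat{\nabla} w=0$ from $\mathcal{B}_k$ to all of $\Omega$ and back to $x$. Because $w$ solves the constant-coefficient Lam\'e system (\ref{direct_1}), it is real-analytic in the interior of $\Omega$, hence so is $\hat{\nabla} w$; vanishing of an analytic field on the nonempty open set $\mathcal{B}_k$ together with the connectedness of $\Omega$ yields $\hat{\nabla} w\equiv 0$ in $\Omega$ (equivalently, unique continuation applied to $w$ minus the rigid motion it agrees with on $\mathcal{B}_k$). Then $\nabla\cdot w=\mathrm{tr}(\hat{\nabla} w)=0$ as well, so the energy $\int_\Omega 2\mu_0|\hat{\nabla} w|^2+\lambda_0(\nabla\cdot w)^2\,dx$ vanishes; by coercivity of this form on $\mathcal{V}$ (Korn's inequality together with $w|_{\Gamma_{\textup D}}=0$, the same estimate underlying the Lax--Milgram solvability of (\ref{var-direct_1})) we get $w=0$. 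Finally, testing the weak formulation (\ref{var-direct_1}) solved by $w$ against arbitrary $v\in\mathcal{V}$ gives $\int_{\Gamma_{\textup N}}g\cdot v\,ds=a(w,v)=0$ for all $v\in\mathcal{V}$; since the traces of $\mathcal{V}$ are dense in $L^2(\Gamma_{\textup N})^d$ this forces $g=\sum_i x_i g_i=0$, and the orthogonality (hence linear independence) of the loads $g_1,\dots,g_M$ yields $x=0$. Thus $\mathbf{S}_k^\tau$ is positive definite.

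I expect the only delicate step to be the unique-continuation passage: justifying that the symmetric gradient of the forward solution, vanishing on one pixel, must vanish throughout the connected domain. This is exactly where analyticity of solutions of the constant-coefficient Lam\'e system (or the unique continuation principle already implicit in the localized-potentials arguments used in the proof of Lemma \ref{pixel_+}) is invoked; the remaining reductions to $w=0$ and then to $x=0$ are routine consequences of Korn coercivity, trace density, and the independence of the $g_i$.
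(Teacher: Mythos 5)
Your proof is correct and follows essentially the same route as the paper, whose entire proof consists of invoking Lemma 3.5 of \cite{harrach2016enhancing}: represent the quadratic form of ${\bf S}_k^\tau$ as an energy localized on $\mathcal{B}_k$ (using linearity of $g\mapsto u^g_{(\lambda_0,\mu_0)}$), get semi-definiteness from $\tau\geq 0$, and get definiteness from unique continuation plus the linear independence of the loads $g_i$. You additionally supply the elasticity-specific details that the paper leaves implicit in its citation --- namely that $\hat{\nabla}w=0$ on a pixel forces a rigid motion rather than a constant, handled via interior real-analyticity of solutions of the constant-coefficient Lam\'e system, followed by Korn coercivity and trace density --- which is precisely the adaptation needed to transplant the EIT argument to the present setting.
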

 }
 \begin{proof}
 We adopt the proof of Lemma 3.5 from \cite{harrach2016enhancing} for the matrix
 ${\bf S}_k^\tau$, which directly yields the desired result.
 \end{proof}
 \noindent
 \begin{proof}{{\bf (Theorem \ref{theo_min_prob_+})}} This proof is based on the proof of Theorem 3.2 from \cite{harrach2016enhancing}.
 \\
to (i) Since the functional
\vspace{-0.3cm}
 \begin{align*}
 \nu\mapsto \Vert {\bf R}(\nu)\Vert_F^2:=\sum_{i,j=1}^M\langle g_i,r(\nu)g_j\rangle^2
 \end{align*}
 \noindent
 is continuous, it admits a minimizer in the compact set $\mathcal{C}$. 
 \\
 The uniqueness of $\hat{\nu}$ will follow from the proof of (ii) {\it{Step 3}}.
 \\
 \\
 to (ii) {\it{Step 1}} We shall check that for all
 \begin{align*}
 \nu&=\sum_{k=1}^p a_k\chi_k\quad\text{satisfying}\quad 0\leq a_k\leq\min(a_{\max},\beta_k),
 \end{align*}
 \noindent
 it holds that  {\color{black} $r(\nu)\leq 0$} in quadratic sense. 
 {\color{black}
 We want to remark that for
 $a_{\max}$ and  {\color{black} $\tau$} it holds that
(\ref{cond_a}) and (\ref{cond_tau})
in $\mathcal{D}$.
}
 \noindent
\\
{\color{black} We proceed similar as in the proof of Lemma \ref{pixel_+} and use
Lemma \ref{mono_2} for $\lambda_1=\lambda, \mu_1=\mu, \lambda_2=\lambda_0$ and
$\mu_2=\mu_0$. In addition, we multiply the whole expression with $-1$.
Thus, it holds that}
 \noindent
 \begin{align*}
 \langle g&,(\Lambda(\lambda,\mu)-\Lambda(\lambda_0,\mu_0)-\Lambda^\prime(\lambda_0,\mu_0)(\tau \nu,\nu))g\rangle\\
 \leq &-\int_{\mathcal{D}}{\color{black} 2}a_{\max}\hat{\nabla}u_0^{g}:\hat{\nabla}u_0^{g}\,dx+\sum_{k=1}^p\int_{\mathcal{B}_k}{\color{black} 2}a_k\hat{\nabla}u_0^{g}:\hat{\nabla}u_0^{g}\,dx\\
 & {\color{black} -\int_{\mathcal{D}}\tau a_{\max}\nabla\cdot u_0^{g} \nabla\cdot u_0^{g}\,dx}+\sum_{k=1}^p\int_{\mathcal{B}_k}\tau a_k\nabla\cdot u_0^{g} \nabla\cdot u_0^{g}\,dx
 \end{align*}
 \noindent
 for any $g\in L^2(\Gamma_{\textup N})^d$.
\\
\\
\noindent
 If $a_k>0$, {\color{black} it follows $\beta_k\geq a_k>0$, so that Lemma \ref{pixel_+} implies that}
 $\mathcal{B}_k\subseteq\mathcal{D}$. {\color{black} Since $a_k\leq a_{\max}$, we end up with
 $\langle g,r(\nu) g\rangle \leq 0$} for $g\in L^2(\Gamma_{\textup N})^d$.
 \\
 \\
 {\it{Step 2:}}  {\color{black} Let $\hat{\nu}=\sum\limits_{k=1}^p \hat{a}_k\chi_k$} be a minimizer of problem 
 (\ref{min_prob_+}). We {\color{black} show} that $\mathrm{supp}(\hat{\nu})\subseteq \mathcal{D}$.
 \noindent
 \\
 \\
 {\color{black}
 Per definition of $\beta_k$, it holds that $\beta_k\geq \hat{a}_k$. This implies $\beta_k>0$. 
 With Lemma \ref{pixel_+} we have $\mathcal{B}_k\subseteq \mathcal{D}$.
 }
 \\
 \\
 {\it{Step 3:}} 
  {\color{black} We will prove} that, if $\hat{\nu}=\sum\limits_{k=1}^p \hat{a}_k\chi_k$ is a minimizer of 
 problem (\ref{min_prob_+}), 
 {\color{black}then the representation of $\hat{a}_k$ is given by
 \begin{align*}
\hat{a}_k=
\begin{cases}
 0&\text{for}\,\,\mathcal{B}_k\not\subseteq\mathcal{D},\\
 a_{\mathrm{max}}&\text{for}\,\,\mathcal{B}_k\subseteq\mathcal{D}.
\end{cases}
 \end{align*}
 \noindent 
 {\color{black}In fact}, it holds that $\hat{a}_k< a_{\max}$.
 If there exists a pixel $\mathcal{B}_k$ such that $\hat{\nu}(x)<\min (a_{\max},\beta_k)$ in $\mathcal{B}_k$, 
 we can choose $h^\nu> 0$, such that
 $\hat{\nu}+h^\nu\chi_k=a_{\max}$ in $\mathcal{B}_k$.
We will {\color{black} show} that then,
 \begin{align*}
 \Vert {\bf R}(\hat{\nu}+h^\nu\chi_k)\Vert_F < \Vert {\bf R}(\hat{\nu})\Vert_F,
 \end{align*}
 \noindent
 which contradicts the minimality of $\hat{\nu}$. Thus, it follows that
 $\hat{a}_k=\mathrm{min}\left(a_{\mathrm{max}},\beta_k\right)$.
 }
 \\
 \\
 {\color{black}
 To show the contradiction, let
 \noindent
 $\theta_1(\hat{\nu})\geq \theta_2(\hat{\nu})\geq ...\geq \theta_{M}(\hat{\nu})$ be $M$ eigenvalues of 
 ${\bf R}(\hat{\nu})$ and
 $\theta_1(\hat{\nu}+h^\nu\chi_k)\geq \theta_2(\hat{\nu}+h^\nu\chi_k)\geq ...\geq 
 \theta_{M}(\hat{\nu}+h^\nu\chi_k)$ $M$ eigenvalues of ${\bf R}(\hat{\nu}+h^\nu\chi_k)$.
 \\
 \\
 Since ${\bf R}(\hat{\nu})$ and ${\bf R}(\hat{\nu}+h^\nu\chi_k)$ 
 are both symmetric, all of their eigenvalues are real. By the definition of the Frobenius norm, we {\color{black}obtain} 
 \noindent
 \begin{align*}
 &\Vert {\bf R}(\hat{\nu}+h^\nu\chi_k)\Vert_F^2-\Vert {\bf R}(\hat{\nu})\Vert_F^2\\
 &= \sum_{i=1}^M\vert \theta_i(\hat{\nu}+h^\nu{\color{black}\chi_k})\vert^2-\sum_{i=1}^M\vert\theta_i(\hat{\nu})\vert^2\\
 &=\sum_{i=1}^M \left(\theta_i\left(\hat{\nu}+h^\nu\chi_k)+\theta_i(\hat{\nu}\right)\right)\cdot 
 \left(\theta_i(\hat{\nu}+h^\nu\chi_k)-\theta_i(\hat{\nu})\right).
 \end{align*}
 \noindent
 Due to  {\it{Step 1}}, $r(\hat{\nu})\leq 0$ and $r(\hat{\nu}+h^\nu\chi_k)\leq 0$ in the quadratic sense. 
 Thus, for all $x=(x_1,...,x_{M})^T\in\mathbb{R}^M$, we have
 \begin{align*}
 x^T {\bf R}(\hat{\nu})x=\sum_{i,j=1}^M x_i x_j \langle g_i, r(\hat{\nu})g_j\rangle=
 \langle g,r(\hat{\nu}) g\rangle\leq 0,
 \end{align*}
 \noindent
 where $ g=\sum\limits_{i=1}^M x_i g_i$. This means that $-{\bf R}(\hat{\nu})$ is a positive semi-definite symmetric matrix in $\mathbb{R}^{M\times M}$.
{\color{black}Due to the fact, that all eigenvalues of a positive semi-definite symmetric matrix are non-negative, it follow that}
 $\theta_i(\hat{\nu})\leq 0$ for all
 $i\in\lbrace 1, ..., M\rbrace$.
{\color{black}By the same considerations,} $-{\bf R}(\hat{\nu}+h^\nu\chi_k)$ is also a positive semi-definite matrix. 
 }
 \noindent
We want to remark, that ${\bf S}_k^\tau$ is positive definite as proven
 in Lemma \ref{pos_S} and hence, all $M$ eigenvalues of
$\theta_1({\bf S}_k^{\tau})
\geq ...\geq 
\theta_{M}({\bf S}_k^{\tau})$ 
are positive. Since 
 \begin{align*}
 {\bf R}(\hat{\nu}+h^\nu\chi_k)=
 {\bf R}(\hat{\nu})+h^\nu {\bf S}_k^{\tau}
 \end{align*}
 \noindent
 and the matrices ${\bf R}(\hat{\nu}+h^\nu\chi_k)$, ${\bf R}(\hat{\nu})+h^\nu$ and
  ${\bf S}_k^\tau$ are symmetric, we can apply Weyl's Inequalities to get
 \begin{align*}
 \theta_i(\hat{\nu}+h^\nu\chi_k)\geq \theta_i(\hat{\nu})
 +\theta_{M}(h^\nu {\bf S}_k^{\tau})
 >\theta_i (\hat{\nu})
 \end{align*}
 \noindent
 for all $i\in\lbrace 1,...,M\rbrace.$
 \\
 \\
 {\color{black}
 In summary we end up with
 \begin{align*}
 \Vert {\bf R}(\hat{\nu}+h^\nu \chi_k)\Vert_F < \Vert {\bf R}(\hat{\nu})\Vert_F,
 \end{align*}
 \noindent
which contradicts the minimality of $\hat{\nu}$ and thus, ends the proof of {\it{Step 3}}.
}
 \\
 \\
 {\color{black}
 {\it{Step 4:}} {\color{black}We show that,} if $\mathcal{B}_k\subseteq \mathcal{D}$, then $\mathcal{B}_k\subseteq\mathrm{supp}(\hat{\nu})$. 
 Indeed, since $\hat{\nu}$ is a minimizer of problem (\ref{min_prob_+}), {\it{Step 3}} implies that 
 \begin{align*}
 \hat{\nu}=\sum_{k=1}^p \min(a_{\max},\beta_k)\chi_k.
 \end{align*}
 }
 \\
 \noindent
 Since $\mathcal{B}_k\subseteq\mathcal{D}$, it follows from Lemma \ref{pixel_+} that
 $\min(a_{\max},\beta_k)>0$. Thus, $\mathcal{B}_k\subseteq \mathrm{supp}(\hat{\nu})$.
 \\
 \\
 \noindent
 {\color{black}
 In conclusion, problem (\ref{min_prob_+}) admits a unique minimizer $\hat{\nu}$ with
 \begin{align*}
\hat{\nu}=\sum_{k=1}^p \min(a_{\max},\beta_k)\chi_k.
 \end{align*}
 \noindent
 This minimizer fulfills
 \begin{align*}
\hat{\nu}=
 \begin{cases}
a_{\max}\,\,\text{in}\,\,\mathcal{B}_k,&\quad\text{if}\,\, \mathcal{B}_k\,\,\text{lies inside}\,\,\mathcal{D},\\
 0\,\,\hspace{0.55cm}\text{in}\,\, \mathcal{B}_k,&\quad\text{if}\,\,\mathcal{B}_k\,\,\text{does not lie inside}\,\,\mathcal{D},
 \end{cases}
 \end{align*}
 \noindent
 so that 
 \begin{align*}
  \hat{\nu}=\sum_{\mathcal{B}_k\subseteq \mathcal{D}} a_{\mathrm{max}}\chi_k.
 \end{align*}
 }
 \end{proof}
\noindent
Next, we go over to noisy data and take a look at the following lemma,
{\color{black}where we set $V^\delta:=\frac{1}{2}( V^\delta + (V^\delta)^{*}) $, since we always can redefine the data $V^\delta$ in this way without loss of generality. Thus, we can assume that $V^\delta$ is self-adjoint.}
\\
 \begin{lemma}\label{lem_delta_+}
 Assume that $\Vert \Lambda^\delta(\lambda,\mu)-\Lambda(\lambda,\mu)\Vert \leq \delta$. 
 Then for every pixel $\mathcal{B}_k$, it holds that
 $\beta_k\leq \beta_{k,\delta}$ for all $\delta >0$.
 \end{lemma}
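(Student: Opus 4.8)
The plan is to show that the value $a=\beta_k$ is itself admissible in the definition (\ref{beta_delta}) of $\beta_{k,\delta}$; since $\beta_{k,\delta}$ is the maximum over that admissible set, this immediately gives $\beta_{k,\delta}\geq\beta_k$. First I would rephrase the operator-norm hypothesis $\Vert\Lambda^\delta(\lambda,\mu)-\Lambda(\lambda,\mu)\Vert\leq\delta$ as an inequality in the sense of quadratic forms. Both $\Lambda(\lambda,\mu)$ and $\Lambda^\delta(\lambda,\mu)$ are self-adjoint (the self-adjointness of $\Lambda^\delta$ is precisely what the symmetrization $V^\delta:=\tfrac{1}{2}(V^\delta+(V^\delta)^\ast)$ preceding the statement secures), so their difference is self-adjoint and the spectral theorem turns the norm bound into the two-sided Loewner bound $-\delta I\leq\Lambda^\delta(\lambda,\mu)-\Lambda(\lambda,\mu)\leq\delta I$. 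In particular,
\begin{align*}
\Lambda^\delta(\lambda,\mu)-\delta I\leq\Lambda(\lambda,\mu).
\end{align*}

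Next I would invoke the defining property of $\beta_k$. By (\ref{def_beta}), the value $a=\beta_k$ satisfies
\begin{align*}
\Lambda(\lambda,\mu)-\Lambda(\lambda_0,\mu_0)\leq\Lambda^\prime(\lambda_0,\mu_0)(\tau\beta_k\chi_k,\beta_k\chi_k).
\end{align*}
Subtracting $\Lambda(\lambda_0,\mu_0)$ from the previous operator inequality and chaining it with this one yields
\begin{align*}
\Lambda^\delta(\lambda,\mu)-\Lambda(\lambda_0,\mu_0)-\delta I
\leq\Lambda(\lambda,\mu)-\Lambda(\lambda_0,\mu_0)
\leq\Lambda^\prime(\lambda_0,\mu_0)(\tau\beta_k\chi_k,\beta_k\chi_k).
\end{align*}
Hence $a=\beta_k$ belongs to the set $\lbrace a\geq 0:\ \Lambda^\delta(\lambda,\mu)-\Lambda(\lambda_0,\mu_0)-\delta I\leq\Lambda^\prime(\lambda_0,\mu_0)(\tau a\chi_k,a\chi_k)\rbrace$, and since $\beta_{k,\delta}$ is the maximum of this set we conclude $\beta_{k,\delta}\geq\beta_k$, as desired for every pixel $\mathcal{B}_k$ and every $\delta>0$.

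There is no deep obstacle: the heart of the argument is a one-line monotonicity comparison, once the norm bound is recast in the Loewner order. The two points that genuinely require care are (a) that the error operator be self-adjoint, so that $\Vert\cdot\Vert\leq\delta$ is equivalent to $-\delta I\leq\cdot\leq\delta I$, which is exactly why the symmetrization of $V^\delta$ is carried out just before the lemma; and (b) that the maxima defining $\beta_k$ and $\beta_{k,\delta}$ are actually attained, so that the defining inequality genuinely holds at $a=\beta_k$ and membership in the admissible set suffices. The latter follows because $\Lambda^\prime(\lambda_0,\mu_0)(\tau\chi_k,\chi_k)$ is negative semidefinite and the constraint depends linearly on $a$, whence each admissible set is a bounded closed interval $[0,\beta_k]$ (respectively $[0,\beta_{k,\delta}]$) that is nonempty by Corollary \ref{monotonicity} under the assumptions $\lambda\geq\lambda_0$, $\mu\geq\mu_0$.
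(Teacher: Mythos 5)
Your proof is correct, and its core is the same as the paper's: recast the noise bound as an inequality in the quadratic-form (Loewner) order, using that $V^\delta$ has been symmetrized, and combine it by transitivity with the defining inequality of $\beta_k$ to conclude that $a=\beta_k$ is admissible for the noisy constraint, hence $\beta_k\leq\beta_{k,\delta}$. The one notable difference is \emph{which} noisy constraint gets verified. You work with definition (\ref{beta_delta}) exactly as written, i.e.\ with $\Lambda^\delta(\lambda,\mu)-\Lambda(\lambda_0,\mu_0)-\delta I\leq\Lambda'(\lambda_0,\mu_0)(\tau a\chi_k,a\chi_k)$, and for that your two-step chain suffices with no further operator-theoretic input. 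The paper instead verifies $\vert V^\delta\vert+\Lambda'(\lambda_0,\mu_0)(\tau a\chi_k,a\chi_k)\geq-\delta I$ for all $a\in[0,\beta_k]$, where $V^\delta=\Lambda(\lambda_0,\mu_0)-\Lambda^\delta(\lambda,\mu)$; this is the form involving $\vert V^\delta\vert$ that matches the discrete quantity $\tilde{\beta}_{k,\delta}$ actually computed in the numerical implementation, and it costs one extra ingredient, namely $\vert V^\delta\vert\geq V^\delta$ for self-adjoint $V^\delta$ (Lemma 3.6 of \cite{harrach2016enhancing}); after that, the same two facts you use ($V-V^\delta\geq-\delta I$ and $V+\Lambda'(\lambda_0,\mu_0)(\tau\beta_k\chi_k,\beta_k\chi_k)\geq 0$) close the argument. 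So your route is slightly more economical for the lemma as literally stated, while the paper's version buys compatibility with the $\vert V^\delta\vert$-based admissible set used later; to cover that variant as well, you would only need to append the single inequality $\vert V^\delta\vert\geq V^\delta$ to your chain. Your closing remarks on attainment of the maxima (closed, bounded, nonempty interval via negative semidefiniteness of $\Lambda'(\lambda_0,\mu_0)(\tau\chi_k,\chi_k)$ and Corollary \ref{monotonicity}) address a point the paper leaves implicit, which is a welcome addition rather than a defect.
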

 
 \begin{proof}
 The proof follows the lines of Lemma 3.7 in \cite{harrach2016enhancing} with the following modifications.
 We have to check that $\beta_k$ {\color{black} as given in (\ref{def_beta})} fulfills the relation
 \begin{align*}
 \vert V^\delta\vert +\Lambda^\prime(\lambda_0,\mu_0)(\tau  a \chi_k,a \chi_k)\geq -\delta I
 \quad \text{for all}\,\,  a \in[0,\beta_k],
 \end{align*}
 \noindent
{\color{black} where $\vert V^\delta\vert=\sqrt{(V^\delta)^*V^\delta}$.}
\\
\\
 As proven in \cite{harrach2016enhancing}, the operator $V-V^\delta\geq -\delta I$ in quadratic sense. 
 Further on Lemma 3.6 from \cite{harrach2016enhancing} implies $\vert V^\delta\vert\geq V^\delta$, 
{\color{black} since $V^\delta$ is self-adjoint.}  Hence,
 \begin{align*}
 \vert V^\delta\vert&+\Lambda^\prime(\lambda_0,\mu_0)(\tau\beta_k\chi_k,\beta_k\chi_k)\\
 &\geq V^\delta +\Lambda^\prime(\lambda_0,\mu_0)(\tau\beta_k\chi_k,\beta_k\chi_k)\\
 &=V+\Lambda^\prime(\lambda_0,\mu_0)(\tau\beta_k\chi_k,\beta_k\chi_k)+V^\delta-V\\
 &\geq -\delta I.
\end{align*}   
 \end{proof}
 \begin{remark}
 As a consequence, it holds that
 \begin{itemize}
 \item[1.] {\color{black} If $\mathcal{B}_k$ lies inside $\mathcal{D}$, then $\beta_{k,\delta}\geq a_{\max}$.}
 \item[2.] If $\beta_{k,\delta}=0$, then $\mathcal{B}_k$ does not lie inside $\mathcal{D}$. 
 \end{itemize}
 \end{remark}
{\color{black}
 \begin{proof}{{\bf (Theorem \ref{theo_conv_+})}} This proof is based on {\color{black} the proof of Theorem 3.8 in} \cite{harrach2016enhancing}.
 \\
 to (i) 
 For the proof of the existence of a minimizer of (\ref{min_prob_delta_+}), we argue as in the proof of 
 Theorem \ref{theo_min_prob_+} (i). 
{\color{black} First, we take a look at the functional
 \begin{align}\label{func_R_delta}
  \nu\mapsto \Vert {\bf R}_\delta(\nu)\Vert_F^2,
 \end{align}
 \noindent
which is defined by $({\bf R}_{\delta}(\nu))_{i,j=1,\ldots M}:=\left(\langle g_i,r_{\delta}(\nu)g_j\rangle\right)_{i,j=1,\ldots M}$ via the residual (\ref{resid_noise}).
Since the functional (\ref{func_R_delta})}
 is continuous, it follows that there exists at least one minimizer in the compact set 
 $\mathcal{C}^\delta$.
 \\
 \\
 \noindent
 to (ii) {\it Step 1:} Convergence of a subsequence of $\hat{\nu}_\delta$
 \\
 For any fixed $k$, the sequence
 $\lbrace\hat{a}_{k,\delta}\rbrace_{\delta>0}$  is
 bounded from below by $0$ and from above by $a_{\max}$, respectively.  
 By Weierstrass' Theorem, there exists a subsequence 
 $(\hat{a}_{1,\delta_n},...,\hat{a}_{p,\delta_n})$ converging to some limit 
 $(a_{1},...,a_p)$.
 Of course, $0\leq  a_k\leq a_{\max}$ for all $k=1,...,p$.
 \\
 \\
{\it Step 2:} Upper bound and limit
 \\
We shall check that $a_k\leq \beta_k$ for all $k=1,...,p$.
As shown in the proof of Theorem 3.8 in \cite{harrach2016enhancing}, $\vert V^\delta\vert$ converges to $\vert V\vert$
{\color{black}
in the operator norm as $\delta$ goes to $0$,} and hence, for any fixed $k$,
\begin{align*}
\vert V\vert +\Lambda^\prime(\lambda_0,\mu_0)(\tau a_k\chi_k,a_k\chi_k)=
\lim_{\delta_n\to 0}(\vert V^{\delta_n}\vert +\Lambda^\prime(\lambda_0,\mu_0)(\tau\hat{a}_{k,\delta_n}\chi_k,\hat{a}_{k,\delta_n} \chi_k))
\end{align*}
\noindent
in the operator norm. As in \cite{harrach2016enhancing}, {\color{black} we obtain that for all $g\in L^2(\Gamma_{\mathrm{N}})^3$,}
\begin{align*}
\langle g, (\vert V\vert +\Lambda^\prime(\lambda_0,\mu_0)(\tau a_k\chi_k,a_k\chi_k))g\rangle\geq 0.
\end{align*}
{\it Step 3:} Minimality of the limit
\\
{\color{black}Due to Lemma \ref{lem_delta_+}, we know that}
$\min(a_{\max},\beta_k)\leq\min(a_{\max},\beta_{k,\delta})$ for all $k=1,...,p$. 
Thus, $\hat{\nu}$ belongs to the admissible set $\mathcal{C}_\delta$ of the 
minimization problem (\ref{min_prob_delta_+}) for all $\delta>0$. 
By minimality of $\hat{\nu}_\delta$, we obtain
\begin{align*}
\Vert {\bf R}_\delta(\hat{\nu}_\delta)\Vert_F\leq \Vert {\bf R}_\delta(\hat{\nu})\Vert_F.
\end{align*}
\noindent
Denote by $\nu=\sum\limits_{k=1}^p a_k\chi_k$, where $a_k$ are 
the limits {\color{black}derived in} {\it{Step 1}}. We have that
\begin{align*}
\Vert {\bf R}_{\delta_n}(\hat{\nu}_{\delta_n})\Vert_F^2&=
\sum_{i,j=1}^M\left\langle  g_i,\left(-V^{\delta_n}-\sum_{k=1}^p \Lambda^\prime(\lambda_0,\mu_0)
(\tau \hat{a}_{k,\delta_n}\chi_k,\hat{a}_{k,\delta_n}\chi_k)\right) g_j\right\rangle^2,\\
\Vert {\bf R}({\nu})\Vert_F^2&=
\sum_{i,j=1}^M\left\langle  g_i,\left(-V-\sum_{k=1}^p \Lambda^\prime(\lambda_0,\mu_0)(\tau a_k\chi_k,a_k\chi_k)\right) g_j\right\rangle^2.
\end{align*}
\noindent
With the same arguments as in the proof of Theorem 3.8 in \cite{harrach2016enhancing}, 
{\color{black} i.e. that $V$ converges to $V^\delta$ as well as $\hat{a}_{k,\delta}$ goes to $a_k$, we are led to}
\begin{align*}
\Vert {\bf R}(\nu)\Vert_F\leq \Vert {\bf R}(\hat{\nu})\Vert_F.
\end{align*}
\noindent
Further on, by the uniqueness of the minimizer we obtain $\nu=\hat{\nu}$ that is 
\begin{align*}
 a_k=\hat{a}_k=\begin{cases}
 0&\text{for}\,\,\mathcal{B}_k\not\subseteq\mathcal{D},\\
 a_{\mathrm{max}}&\text{for}\,\,\mathcal{B}_k\subseteq\mathcal{D}.
\end{cases}
\end{align*}
\noindent
\\
{\it Step 4:} Convergence of the whole sequence $\hat{\nu}_\delta$
\\
Again this is obtained in the same way as in  \cite{harrach2016enhancing} {\color{black} and is based on the
knowledge that every subsequence of $(\hat{a}_{1,\delta},\ldots,\hat{a}_{p,\delta})$ possesses a convergent subsubsequence,
that goes to the limit $(\min(a,\beta_1),\ldots,$ $\min(a,\beta_p))$.}
 \end{proof}
 \noindent
\begin{remark}
 All in all, we are led to the discrete formulation of the minimization problem for noisy data: 
\begin{align}\label{min_prob_disc}
\min_{{\nu}\in \mathcal{C}_\delta}\Vert {\bf R}_\delta(\nu)\Vert_F,
\end{align}
 \noindent 
 under the constraint
  \begin{align}
   0 &\leq \nu_k \leq \min\left(a_{\max}, \tilde{\beta}_{k,\delta}\right),
  \end{align}
  \noindent
  where 
 \begin{align}
 a_{\max}&=\mu_0-\frac{\mu_0^2}{\mu_0+c^\mu},\\
 \tau&=\dfrac{1}{a_{\mathrm{max}}}\left(\lambda_0-\dfrac{\lambda_0^2}{\lambda_0+c^{\lambda}}\right),\\
 \tilde{\beta}_{k,\delta}&=\max\lbrace a \geq 0\,:\, - a {\bf S}^{\tau}_k\geq -\delta {\bf I}-\vert {\bf V}^\delta\vert \rbrace
 \end{align} 
\noindent
{\color{black}
 with $\vert {\bf V}^\delta\vert:=\sqrt{({\bf V}^\delta)^{*} {\bf V}^\delta}$.
\\
\\
We want to mention, that $\bf{V}$ is positive definite, however, $\bf{V}^\delta$ is not in general, which leads to problems in the proofs. Hence, we use $\vert {\bf V}^\delta\vert$ instead.}
 \end{remark}
}
 
 \noindent
 \\
 Next, we take a closer look at the determination of $\tilde{\beta}_{k,\delta}$ (see \cite{harrach2016enhancing}),
 where $\tilde{\beta}_{k,0}=\tilde{\beta}_{k}$:
 \\
 \\
 First, we replace the infinite-dimensional operators $\vert { V}^\delta\vert$ and $\Lambda^\prime(\lambda_0,\mu_0)$ 
{\color{black} in (\ref{beta_delta})} by
 the $M\times M$ matrices ${\bf V}^\delta$, ${\bf S}_k^\tau$ such that we need to find $\tilde{\beta}_{k,\delta}$ with
 \begin{align*}
 - a {\bf S}_k^\tau &\geq -\delta {\bf I} -\vert {\bf V}^\delta \vert
 \end{align*}
 \noindent
 for all $ a \in [0,\tilde{\beta}_{k,\delta}]$.
 \noindent
 Due to the fact that $\delta {\bf I} +\vert {\bf V}^\delta \vert$ is a Hermitian positive-definite matrix, the Cholesky decomposition allows us to decompose 
 it into the product of a lower triangular matrix and its conjugate transpose, i.e.
 \begin{align*}
 \delta {\bf I} +\vert {\bf V}^\delta \vert = {\bf L} {\bf L}^T.
 \end{align*}
 \noindent
 We want to remark that this decomposition is unique. In addition, ${\bf L}$ is invertible, since
 \begin{align*}
 0< \det(\delta {\bf I}+{\bf V}^\delta)=\det({\bf L})\det({\bf L}^T)=\det({\bf L})\overline{\det({\bf L})}.
 \end{align*}
 \noindent
 For each $a>0$, it follows that
 \begin{align*}
 -a{\bf S}_k^\tau +  \delta {\bf I} +\vert {\bf V}^\delta \vert &=-a {\bf S}_k^\tau + {\bf L} {\bf L}^T = 
 {\bf L} (-a {\bf L}^{-1}{\bf S}_k^\tau({\bf L}^T)^{-1}+{\bf I}){\bf L}^T.
 \end{align*}
 \noindent
 It should be noted that in this notation
 $\tilde{\beta}_{k,M,\delta}=\tilde{\beta}_{k-M,\delta}$ for $k=M+1,...,2M$.
 \\
 Based on this, we go over to the consideration of the eigenvalues and apply Weyl's Inequality. 
 {\color{black}
 Since the positive semi-definiteness  of
 $-a {\bf S}_k^\tau +\delta {\bf I} +\vert {\bf V}^\delta \vert$  is
 equivalent to the positive semi-definiteness of 
  $a {\bf L}^{-1}{\bf S}_k^\tau({\bf L}^T)^{-1}+{\bf I}$, we obtain
 \begin{align*}
 \theta_j(-a{\bf L}^{-1} {\bf S}_k^\tau({\bf L}^T)^{-1}+{\bf I})= a\theta_j(-{\bf L}^{-1} {\bf S}_k^\tau({\bf L}^T)^{-1})+1,\quad j=1,...,M,
 \end{align*}
 \noindent
 where $\theta_1(A)\geq ... \geq \theta_{M}(A)$ denote the $M$-eigenvalues of some matrix $A$.
 }
 \\
 \\
 \noindent
 Further, let 
 $\overline{\theta}_{M}({\bf L}^{-1} {\bf S}_k^\tau ({\bf L}^T)^{-1})$ be the smallest eigenvalue of the matrix 
 ${\bf L}^{-1} {\bf S}^\tau_k ({\bf L}^T)^{-1}$. Since ${\bf S}^\tau_k$ is positive semi-definite, so is
 ${\bf L}^{-1} {\bf S}^\tau_k ({\bf L}^T)^{-1}$. Thus, $\overline{\theta}_{M}({\bf L}^{-1} {\bf S}^\tau_k ({\bf L}^T)^{-1})\leq 0$.
 Following the lines of \cite{harrach2016enhancing}, we obtain
\noindent
 \begin{align}\label{calc_beta_delta}
 \tilde{\beta}_{k,\delta}=-\frac{1}{\overline{\theta}_{M}({\bf L}^{-1} {\bf S}_k^\tau ({\bf L}^T)^{-1})}\geq 0.
 \end{align}

 \subsubsection{Numerical Realization}
 
 We close this section with a numerical example, where we again consider two inclusions (tumors)
 in a biological tissue as shown in Figure \ref{standard} (for the values of the Lam\'e parameter see 
 Table \ref{lame_parameter_mono}). In addition to the Lam\'e parameters,
we use the estimated lower and upper bounds $c^{\lambda},c^{\mu},C^{\lambda},C^{\mu}$ given in Table \ref{bounds}.

 \begin{table} [H]
 \begin{center}
 \begin{tabular}{ |c|c| c |}  
\hline
            & $\gamma^{\lambda}$ & $\gamma^{\mu}$ \\
  \hline
lower bounds &  $c^{\lambda}=1.2\cdot 10^6$   &  $c^{\mu}=1.2\cdot 10^4$   \\
 \hline
upper bounds &  $C^{\lambda}=1.7\cdot 10^6$ &  $C^{\mu}=1.7\cdot 10^4$  \\
\hline
\end{tabular}
\end{center}
\caption{Lower and upper bounds $c^{\lambda},c^{\mu},C^{\lambda},C^{\mu}$ in [Pa].}
\label{bounds}
\end{table}
\noindent
For the implementation, we again consider difference measurements and apply {\it quadprog} from Matlab in order to 
solve the minimization problem. {\color{black} In more detail, we perform the following steps:
\\
\begin{itemize}
\item[\textbf{1.)}] Calculate
\begin{align*}
\langle(\Lambda(\lambda_0,\mu_0)-\Lambda^\delta(\lambda,\mu))g_i,g_j\rangle_{i,j=1,\cdots,M}
\end{align*}
\noindent
with COMSOL to obtain ${\bf V}$ via (\ref{monotonicity_test_1}).
\item[\textbf{2.)}] Evaluate $\hat{\nabla}u^{g_i}_{(\lambda_0,\mu_0)}$ and $\nabla\cdot u^{g_i}_{(\lambda_0,\mu_0)}$ for 
$i=1,\cdots,M$, in Gaussian nodes for each tetrahedron.
\item[\textbf{3.)}] Calculate ${\bf S^\lambda}, {\bf S^\mu}$ (cf. Equations (\ref{S_lam}) and (\ref{S_m})) via Gaussian quadrature.
\\
\\
Note that ${\bf S^\lambda}$, ${\bf S^\mu}$ can also be calculated from the stiffness matrix of the FEM implementation without additional quadrature errors by the approach given in \cite{Harrach_FEM}.
\item[\textbf{4.)}] Calculate ${\bf S^\tau}={\bf S^\mu}+\tau{\bf S^\lambda}$ with $\tau$ as in (\ref{def_tau}).
\item[\textbf{5.)}] Calculate $\tilde{\beta}^\delta_k$, $k=1,\ldots,p,$ as in (\ref{calc_beta_delta}).
\item[\textbf{6.)}] Solve the minimization problem (\ref{min_prob_delta_+}) with ${\bf R}_{\delta}(\nu)={\bf S^\tau}\nu-{\bf V}^{\delta}$ with {\it quadprog} in Matlab to obtain 
\begin{align*}
\tilde{\nu}_{\delta}=\sum_{k=1}^p a_{k,\delta}\chi_k.
\end{align*}
\item[\textbf{7.)}] Set $\mu=\mu+\tilde{\nu}_{\delta}$, $\lambda=\lambda_0+\tau\tilde{\nu}_{\delta}$.
\end{itemize}
\noindent
}

\subsubsection*{Exact Data}

We start with exact data, i.e. data without noise and due to the definition of $\delta$ given in (\ref{def_delta}),
with $\delta=0$.

	{\color{black}
\begin{remark}
Performing the single implementation steps on a laptop with ${\it 2\times 2.5}$ GHz and 8 GB RAM, we obtained the following computation times:
Step \textbf{1.)}, i.e., the determination of the matrix ${\bf V}$, was done in  9 min 1 s. The Fr\'echet derivative is computed in 53 s in steps  \textbf{2.)}-\textbf{4.)}.
The solution of the minimization problem (step \textbf{5.)}-\textbf{7.)}) is calculated in 6 min 27 s.
\end{remark}
}

\noindent
\\
Figure \ref{3d_mon_bas_mod} presents the results as 3D plots, while Figure \ref{cut_mon_bas_mod_mu}
shows the corresponding cuts for $\mu$. For the same 
reasons as discussed in Section $3$, we change the transparency of the plots of the $3$D reconstruction
of Figure \ref{3d_mon_bas_mod} as indicated in Figure \ref{alpha_mu_mon}.
Thus, tetrahedrons
with low values have a higher transparency, whereas tetrahedrons with large values are plotted opaque.
  \begin{figure}[H]
 \begin{center}
  \includegraphics[width=0.35\textwidth]{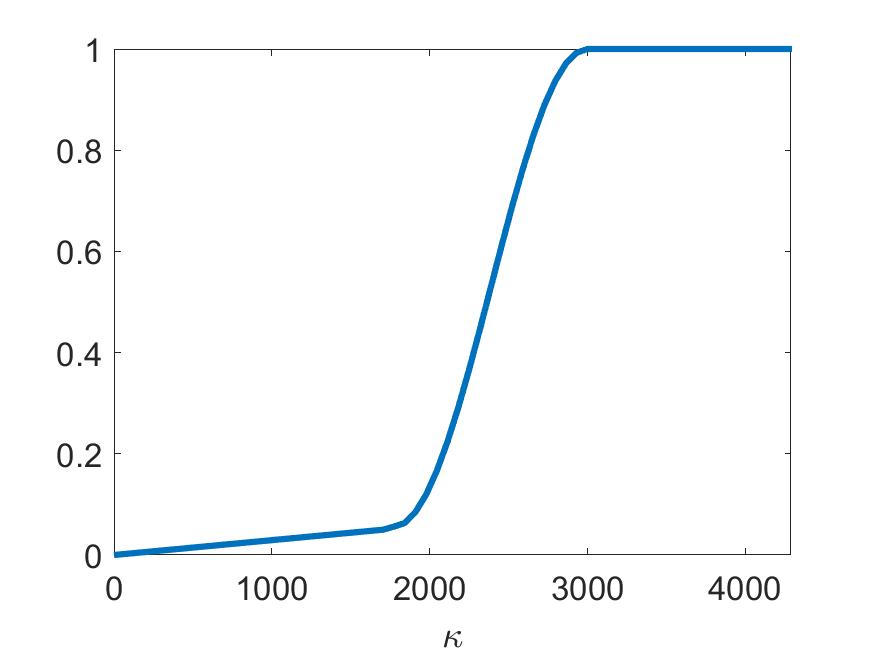}
 \caption{Transparency function for the plots in Figure \ref{3d_mon_bas_mod} mapping the values of $\kappa$ to $\alpha(\kappa)$.}\label{alpha_mu_mon}
 \end{center}
  \end{figure}
  \noindent
  Figures \ref{3d_mon_bas_mod} - \ref{cut_mon_bas_mod_mu} show that solving the minimization problem (\ref{min_prob_disc}) indeed
yields a detection and reconstruction with respect to both Lam\'{e} parameters $\mu$ and $\lambda$. 

  \begin{figure}[H]
 \begin{center}
  \includegraphics[width=0.85\textwidth]{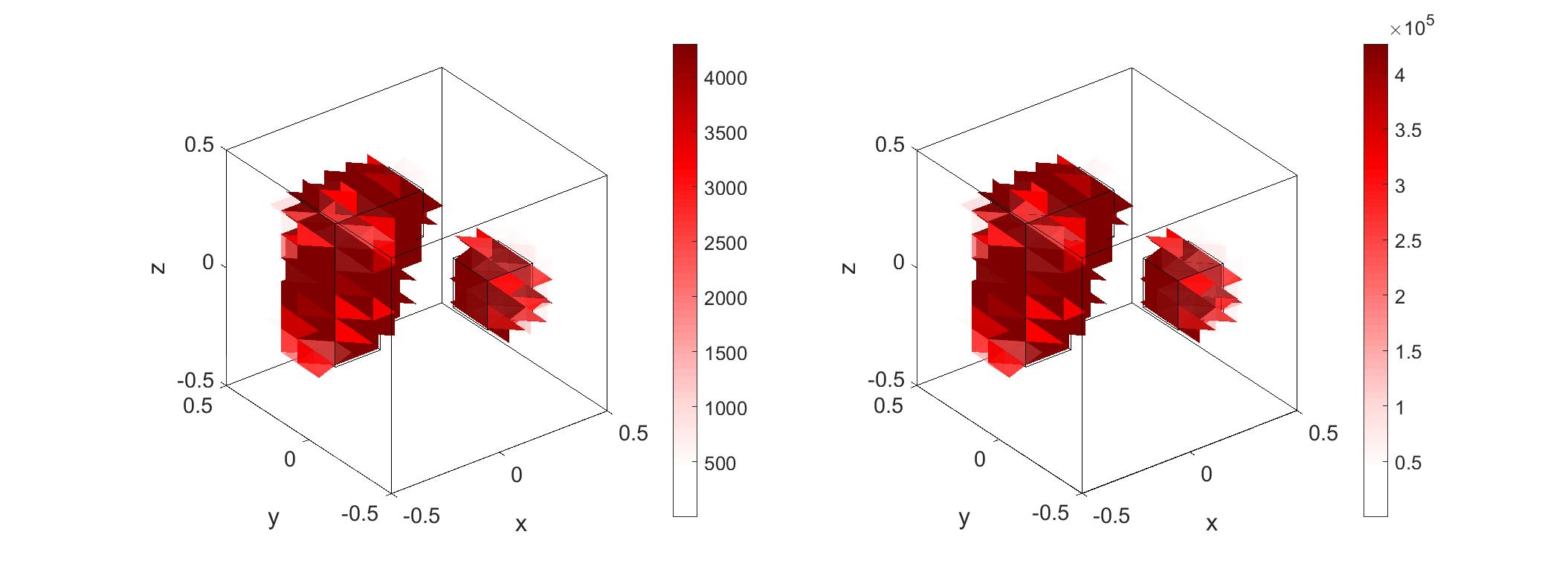}
 \caption{Shape reconstruction of two inclusions (red) of the reconstructed difference in the 
 Lam\'{e} parameter $\mu$ (left hand side) and $\lambda$ (right hand side) without noise, $\delta=0$
and transparency function $\alpha$ as shown in Figure \ref{alpha_mu_mon}.}\label{3d_mon_bas_mod}
 \end{center}
  \end{figure}

 \begin{figure}[H]
 \begin{center}
  \includegraphics[width=0.32\textwidth]{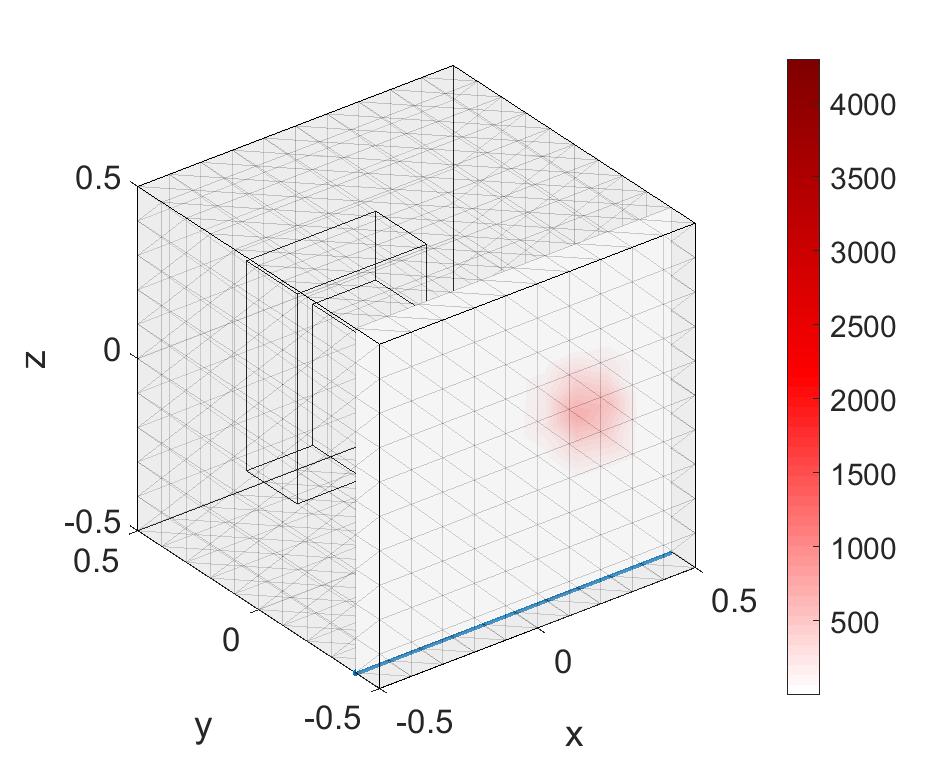}
  \includegraphics[width=0.32\textwidth]{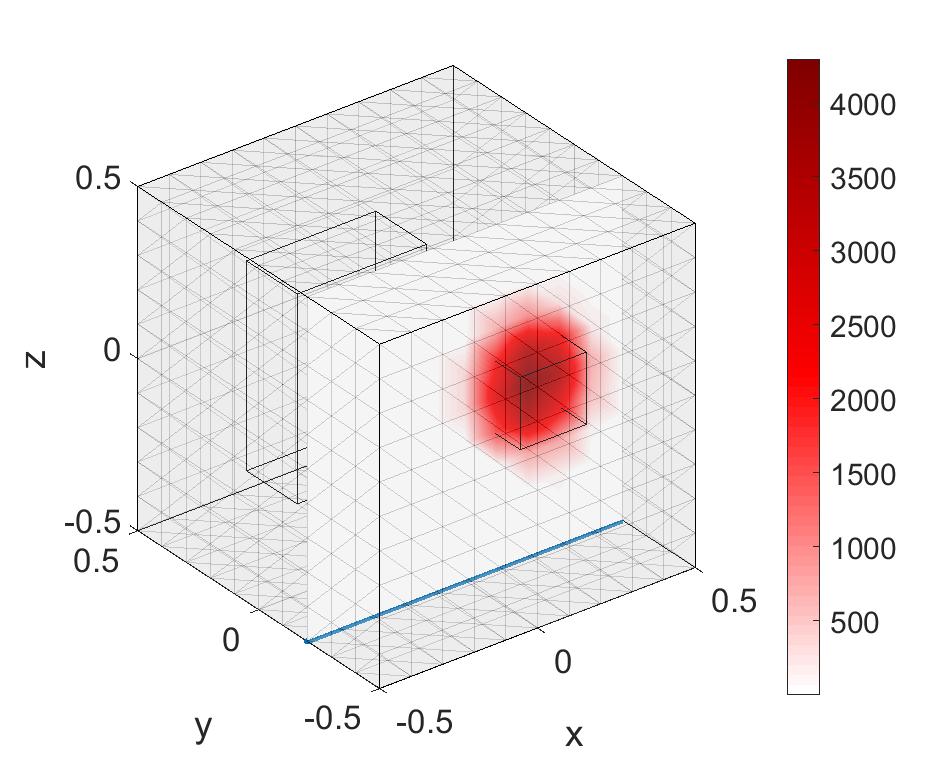}\\  
  \includegraphics[width=0.32\textwidth]{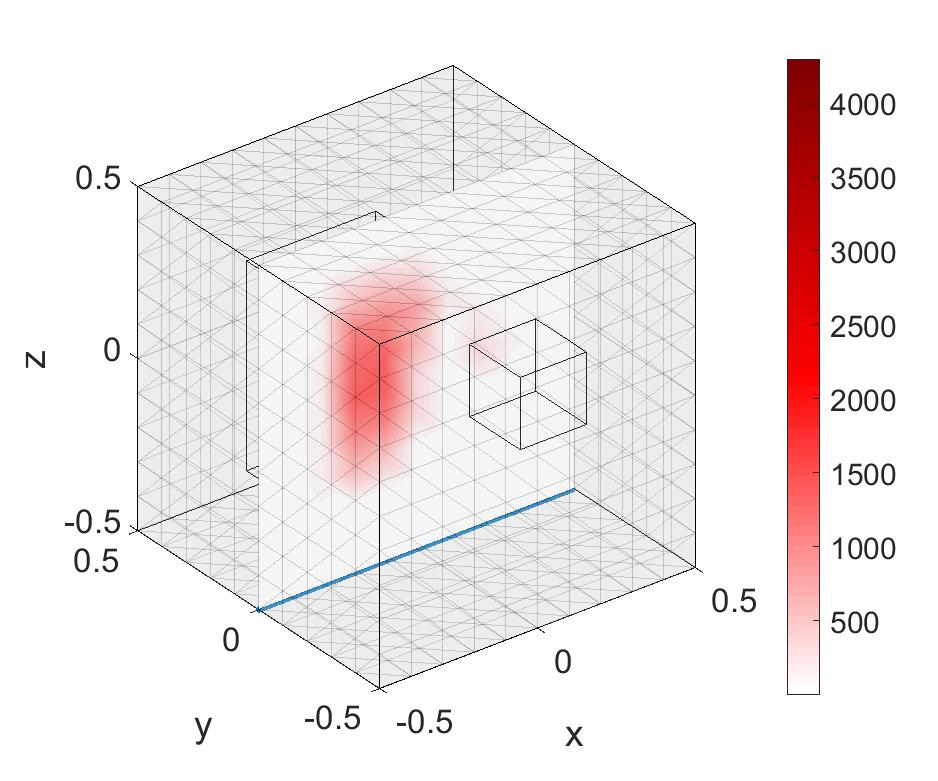}\
   \includegraphics[width=0.32\textwidth]{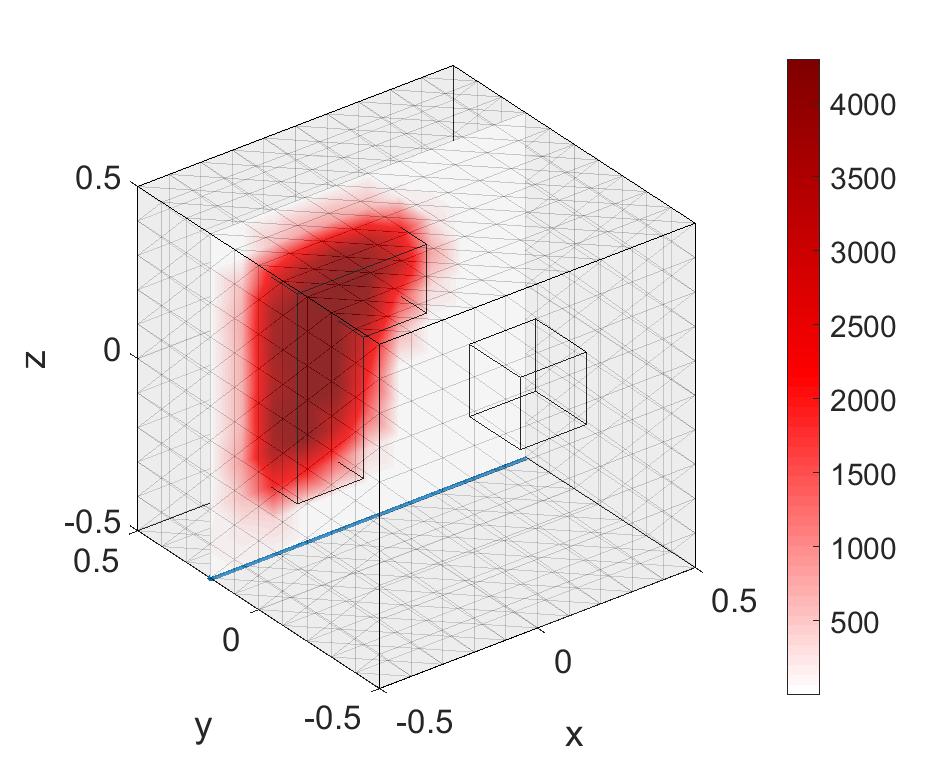}
    \includegraphics[width=0.32\textwidth]{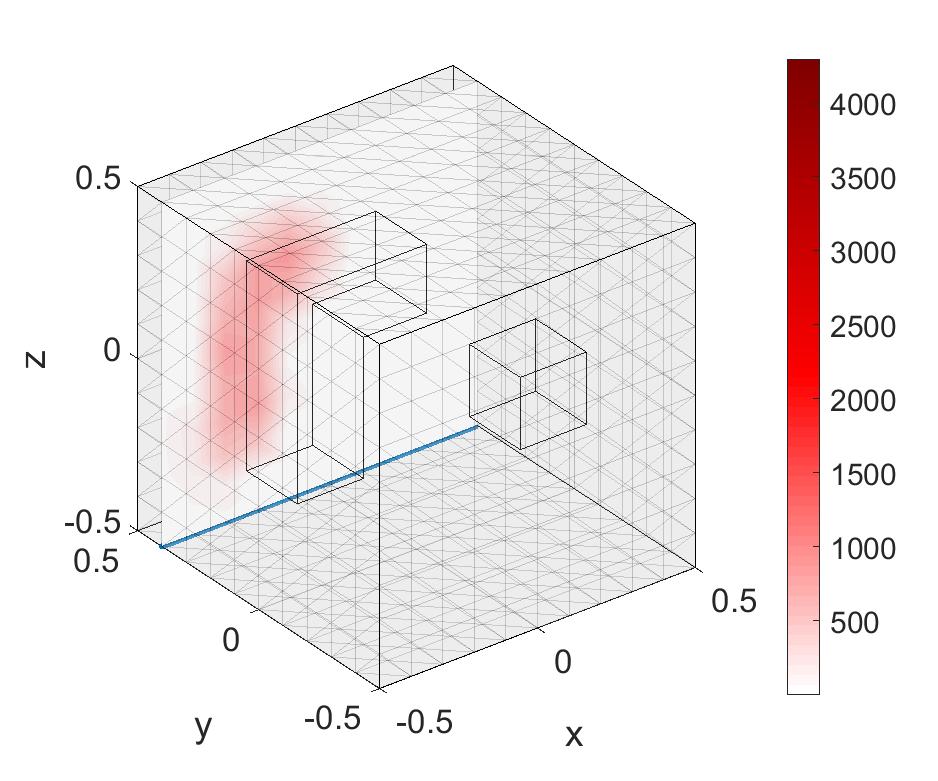}
  \caption{Shape reconstruction of two inclusions (red) of the reconstructed difference in 
  the Lam\'{e} parameter $\mu$ depicted as cuts without noise and $\delta=0$.}\label{cut_mon_bas_mod_mu}
  \end{center}
  \end{figure}

\begin{remark}
Compared with the results obtained with the one-step linearization method as depicted in Figure \ref{one_step_suitable_3d} (right hand side), Figure \ref{3d_mon_bas_mod} shows an improvement because we are now able to also obtain information concerning $\lambda$
which is not possible with the heuristic approach considered in (\ref{min_stand}).
\end{remark}

\subsubsection*{Noisy Data} 

Finally, we take a look at noisy data with a relative noise level $\eta=10\%$, where the $\delta$ is
determined as given in (\ref{def_delta}).
\\
\\
Figures \ref{3d_mon_bas_mod_noise} - \ref{cut_mon_bas_mod_mu_noise} document that
we can even reconstruct the inclusions for noisy data which is a huge advantage compared with the 
results of the one-step linearization (see Figure \ref{one_step_suitable_3d_noise}-
\ref{one_step_suitable_cuts_mu_noise}). This shows us, that the numerical simulations based on
the monotonicity-based regularization are only marginally affected by noise as we have proven in theory,
e.g., in Theorem \ref{theo_conv_+}.

 \begin{figure}[H]
 \begin{center}
  \includegraphics[width=0.85\textwidth]{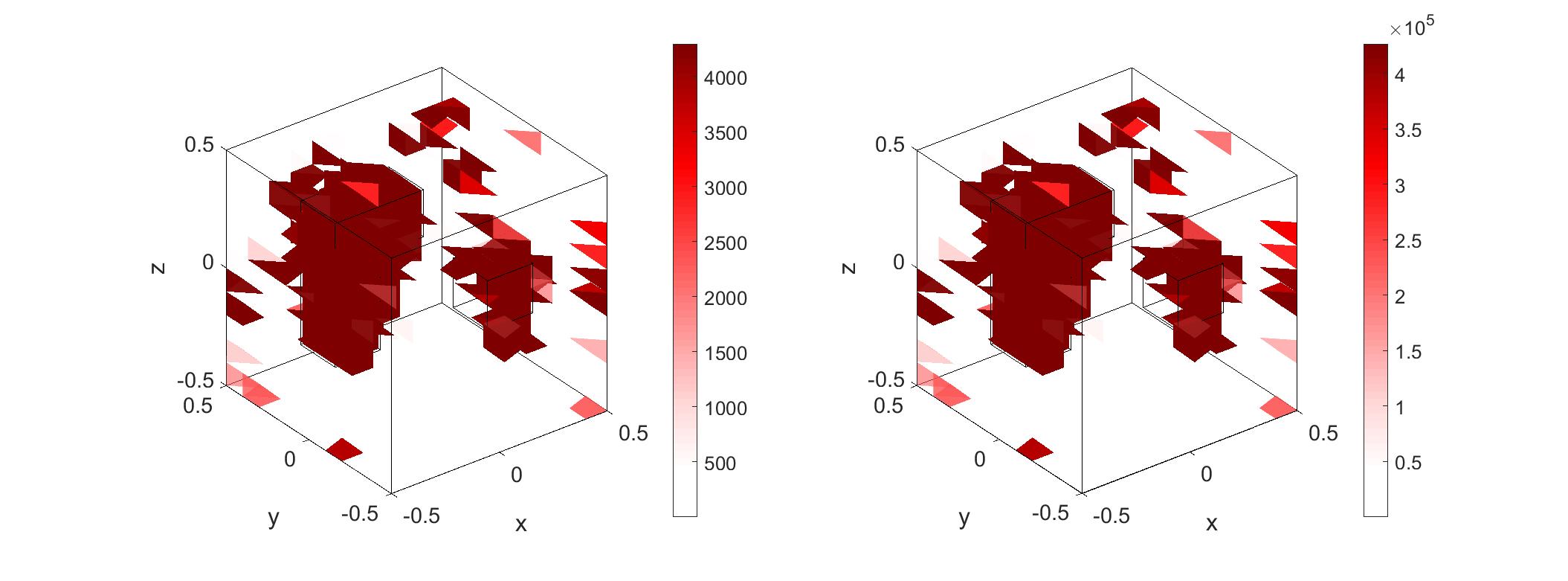}
 \caption{Shape reconstruction of two inclusions (red) of the reconstructed difference in the 
 Lam\'{e} parameter $\mu$ (left hand side) and $\lambda$ (right hand side) 
 with relative noise $\eta=10\%$, $\delta=8.3944\cdot 10^{-8}$
 and transparency function $\alpha$ as shown in Figure \ref{alpha_mu_mon}.}\label{3d_mon_bas_mod_noise}
 \end{center}
  \end{figure}

 \begin{figure}[H]
 \begin{center}
  \includegraphics[width=0.32\textwidth]{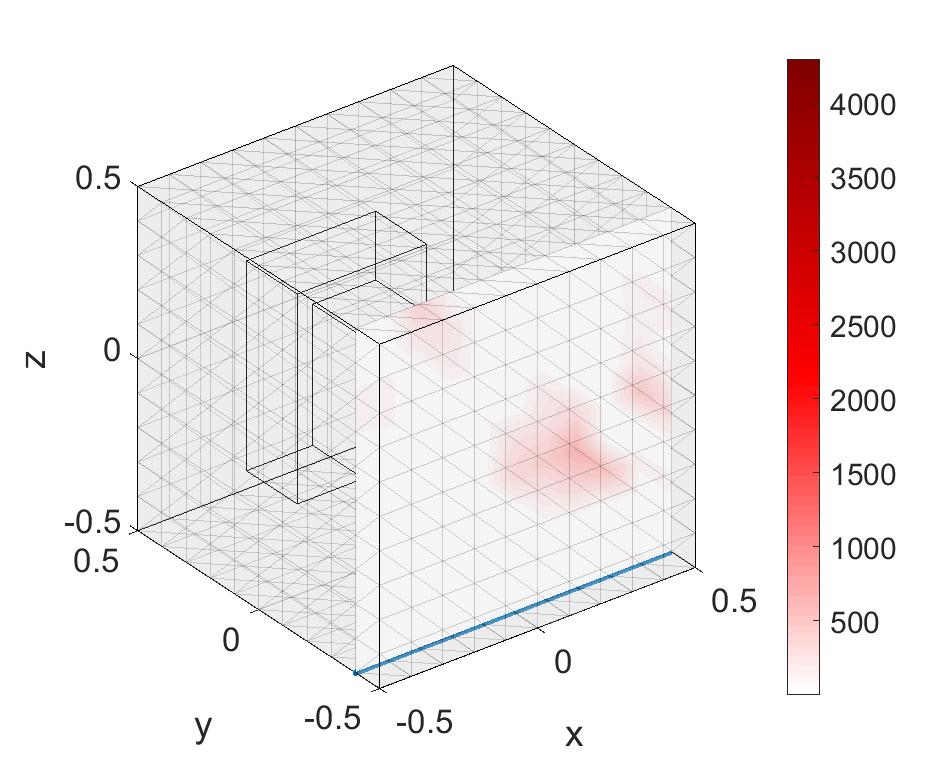}
  \includegraphics[width=0.32\textwidth]{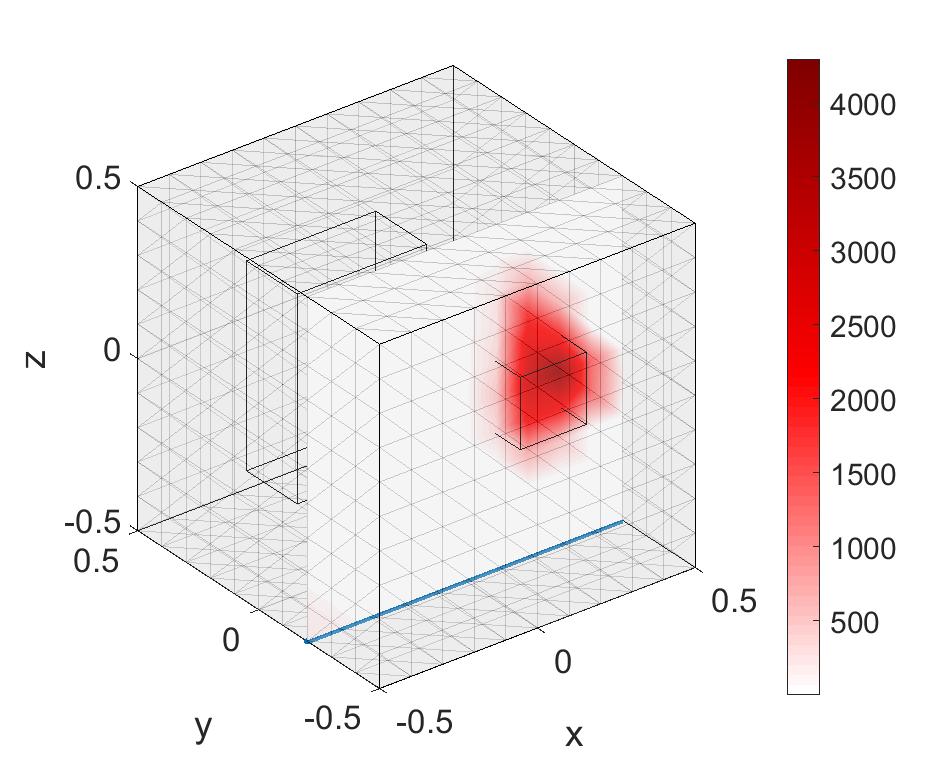}\\
    \includegraphics[width=0.32\textwidth]{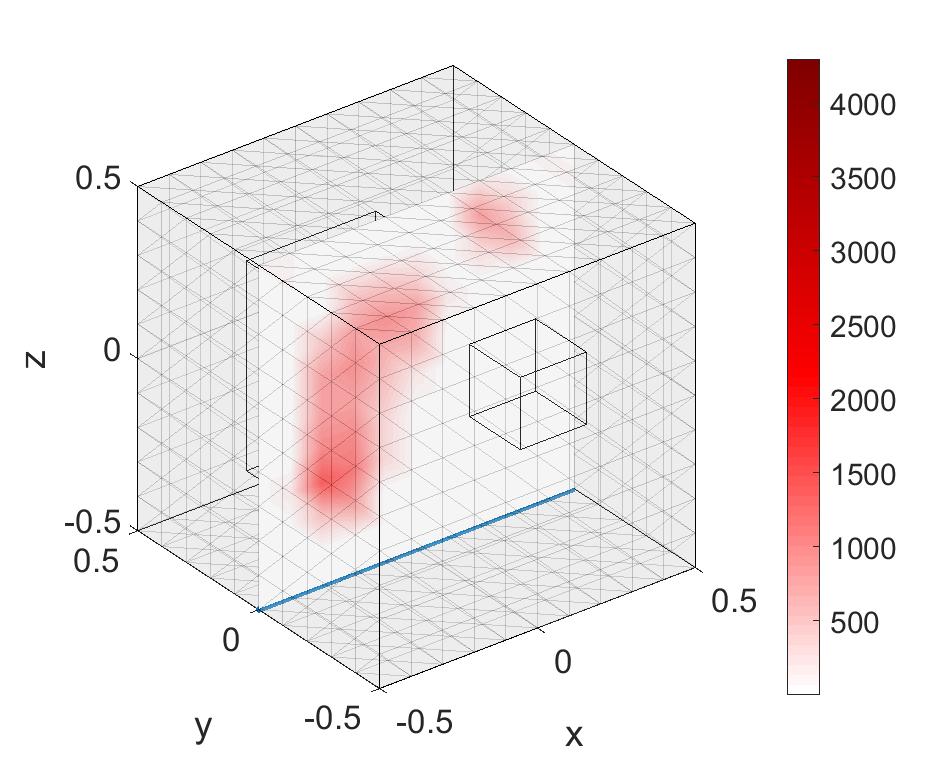}
  \includegraphics[width=0.32\textwidth]{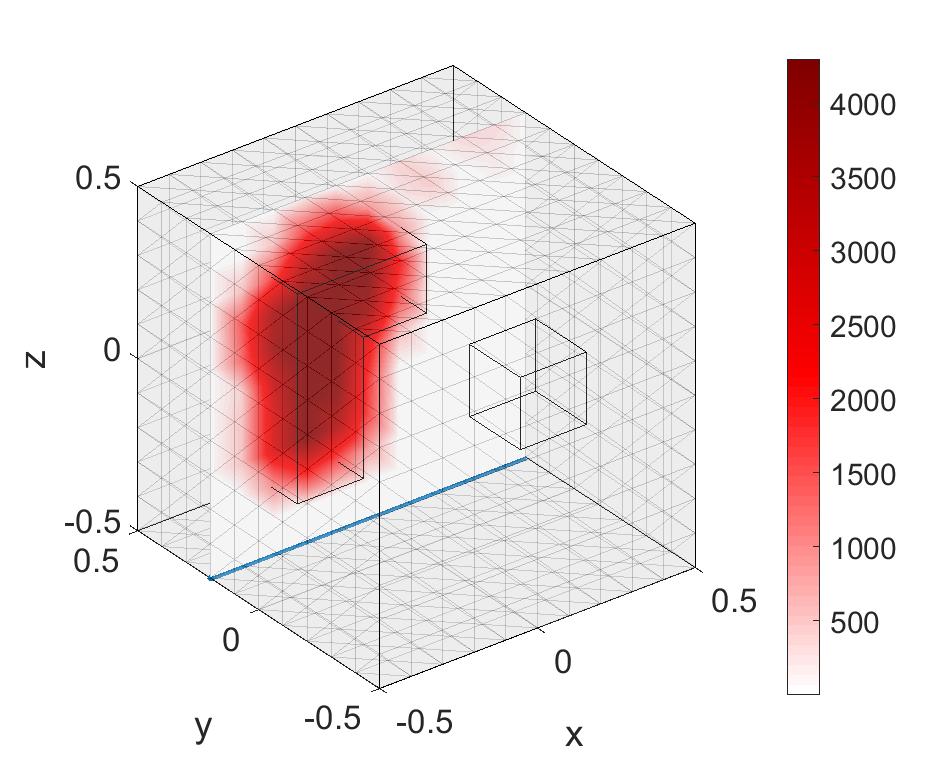}
  \includegraphics[width=0.32\textwidth]{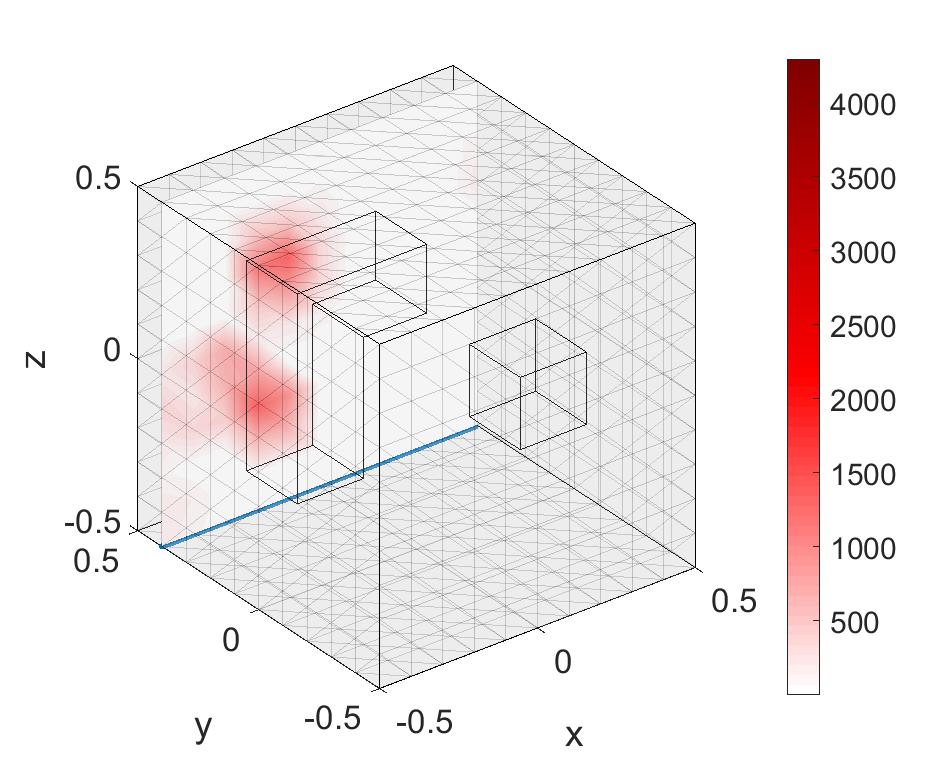}
  \caption{Shape reconstruction of two inclusions (red) of the reconstructed difference in 
  the Lam\'{e} parameter $\mu$ depicted as cuts with relative noise $\eta=10\%$ and $\delta=8.3944\cdot 10^{-8}$.}\label{cut_mon_bas_mod_mu_noise}
  \end{center}
  \end{figure}

\section{Summary}

In this paper we introduced a standard one-step linearization method applied to the Neumann-to-Dirichlet 
operator as a heuristical approach and a monotonicity-based regularization for solving the resulting minimization problem. 
In addition, we proved the existence of such a minimizer. Finally, we presented numerical examples.

\section*{Appendix}
{\color{black}

For the monotonicity-based regularization we focused on the case $\lambda \geq \lambda_0$, $\mu \geq \mu_0$ (see 
Section $5$). For sake of completeness, we formulate the corresponding results for the case that $\lambda \leq \lambda_0$, $\mu \leq \mu_0$.
  Thus, we summarize the corresponding main results and define the set
   {\color{black}
 \begin{align*}
 \mathcal{C}:=
\bigg\lbrace\nu\in L^\infty_+(\Omega)&:\nu=\sum_{k=1}^p a_k \chi_{k},\,a_k\in\mathbb{R},\,0\geq a_k\geq-\min(a_{\max},\beta_k)\bigg\rbrace
 \end{align*}
\noindent
where the quantities $a_{\max}$ and $\tau_{\max}$ are defined as
 \begin{align}
 a_{\max}&:=c^\mu\quad \text{and} \quad
\tau:=\frac{c^\lambda}{c^\mu},
 \end{align}
\noindent
such that
\begin{align}\label{cond_a_minus}
-2\left(\mu-\mu_0\right)+2a&\geq 0,\\
 -\left(\lambda-\lambda_0\right)+\tau a&\geq 0\label{cond_tau_minus}
\end{align}
  \noindent
   for all $0\geq a\geq - a_{\max}$.
   \\
   \\
  \begin{remark}
   The value $a_{\max}$ is obtained from the estimates in Lemma \ref{mono} 
   which results in a different upper bound $a$ compared with the case $\lambda \geq \lambda_0$, $\mu \geq \mu_0$.
  \end{remark}
  }
  \noindent
  \\
  Thus, the theorem for exact data is given by
  \\
  \begin{theorem}\label{theo_min_prob_minus}
 Consider the minimization problem
 \begin{align}\label{min_prob_minus}
\min_{\nu\in\mathcal{C}}\Vert {\bf R}(\nu)\Vert_F.
 \end{align}
 \noindent
 The following statements hold true:
 \begin{itemize}
 \item[(i)] Problem (\ref{min_prob_minus}) admits a unique minimizer $\hat{\nu}$.
 \item[(ii)] $\mathrm{supp}(\hat{\nu})$ and $\mathcal{D}$ agree up to the pixel partition, i.e. for any pixel $\mathcal{B}_k$ 
 \begin{align*}
 \mathcal{B}_k\subset\mathrm{supp}(\hat{\nu})\quad\text{if and only if}\quad \mathcal{B}_k\subset \mathcal{D}.
 \end{align*}
 \noindent
 Moreover, 
 \begin{align*}
\hat{\nu}=\sum_{\mathcal{B}_k\subseteq\mathcal{D}}a_{\max}\chi_{k}.
 \end{align*}
 \end{itemize}
 \end{theorem}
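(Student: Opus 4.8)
The plan is to transcribe the proof of Theorem~\ref{theo_min_prob_+} sign-for-sign, replacing the sharper estimate of Lemma~\ref{mono_2} by the basic monotonicity estimate of Lemma~\ref{mono} (which, as noted in the preceding remark, is exactly what yields the modified bound $a_{\max}=c^\mu$). First I would record the two auxiliary facts used there. The positive definiteness of ${\bf S}_k^\tau$ (the analogue of Lemma~\ref{pos_S}) carries over unchanged, since its localized-potentials proof in \cite{harrach2016enhancing} does not see the sign of the contrast. The pixel characterisation $\mathcal{B}_k\subseteq\mathcal{D}$ if and only if $\beta_k>0$ (the analogue of Lemma~\ref{pixel_+}, with $\beta_k$ defined as in (\ref{def_beta}) but with the inequality reversed) is obtained by rerunning both of its steps: in the forward direction I apply the upper bound of Lemma~\ref{mono} with $\lambda_1=\lambda,\mu_1=\mu,\lambda_2=\lambda_0,\mu_2=\mu_0$, and in the converse direction (by contradiction) I reuse verbatim the localized-potentials sequence from the proof of Theorem~2.1 in \cite{Eberle_mon_test}.

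The crux, and the step I expect to be the main obstacle, is Step~1 of part~(ii): for every admissible $\nu=\sum_k a_k\chi_k$ with $-\min(a_{\max},\beta_k)\leq a_k\leq 0$ I must show $r(\nu)\geq 0$ in the quadratic-form sense, the reverse of the inequality in the positive case. Using the upper bound of Lemma~\ref{mono} to bound $\langle g,(\Lambda(\lambda,\mu)-\Lambda(\lambda_0,\mu_0))g\rangle$ from below by $\int_\Omega 2(\mu_0-\mu)\hat{\nabla} u_0^g:\hat{\nabla} u_0^g+(\lambda_0-\lambda)(\nabla\cdot u_0^g)^2\,dx$, together with the bilinear form of $\Lambda'(\lambda_0,\mu_0)(\tau\nu,\nu)$, the claim reduces to non-negativity of the pointwise integrand $2\big((\mu_0-\mu)+a_k\big)\,\hat{\nabla} u_0^g:\hat{\nabla} u_0^g+\big((\lambda_0-\lambda)+\tau a_k\big)(\nabla\cdot u_0^g)^2$. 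On a pixel outside $\mathcal{D}$ the pixel characterisation forces $a_k=0$ and the integrand vanishes; on a pixel inside $\mathcal{D}$ the two inequalities (\ref{cond_a_minus}) and (\ref{cond_tau_minus}), guaranteed by the choices $a_{\max}=c^\mu$ and $\tau=c^\lambda/c^\mu$, deliver exactly $(\mu_0-\mu)+a_k\geq0$ and $(\lambda_0-\lambda)+\tau a_k\geq0$ for all $a_k\in[-a_{\max},0]$. The bookkeeping here is the only genuinely delicate point: one must confirm that after the reversal the correct tool is the basic estimate of Lemma~\ref{mono} rather than Lemma~\ref{mono_2}, and that (\ref{cond_a_minus})--(\ref{cond_tau_minus}) are precisely the inequalities needed on $\mathcal{D}$.

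Once $r(\nu)\geq0$ is established, the remaining steps mirror Theorem~\ref{theo_min_prob_+}. Existence of a minimizer follows from the continuity of $\nu\mapsto\Vert {\bf R}(\nu)\Vert_F^2$ on the compact set $\mathcal{C}$. For the support identity the pixel characterisation first yields $\mathrm{supp}(\hat\nu)\subseteq\mathcal{D}$, and then the Weyl-inequality perturbation argument runs with the opposite signs throughout: here $-r(\hat\nu)\leq 0$, so ${\bf R}(\hat\nu)$ is positive semi-definite with eigenvalues $\theta_i(\hat\nu)\geq0$, and a perturbation $\hat\nu+h^\nu\chi_k$ with $h^\nu<0$ driving $a_k$ toward its extreme admissible value changes the matrix by $h^\nu{\bf S}_k^\tau$. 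Since ${\bf S}_k^\tau$ is positive definite, Weyl's inequalities give $\theta_i(\hat\nu+h^\nu\chi_k)<\theta_i(\hat\nu)$ for every $i$, hence $\Vert {\bf R}(\hat\nu+h^\nu\chi_k)\Vert_F<\Vert {\bf R}(\hat\nu)\Vert_F$, contradicting minimality unless $\hat a_k=-\min(a_{\max},\beta_k)$. Combined with $\beta_k\geq a_{\max}$ inside $\mathcal{D}$ (from the forward step of the pixel characterisation) and $\beta_k=0$ outside, this pins down $\hat a_k=-a_{\max}$ on every pixel contained in $\mathcal{D}$ and $\hat a_k=0$ elsewhere, giving both the uniqueness asserted in~(i) and the equivalence $\mathcal{B}_k\subseteq\mathrm{supp}(\hat\nu)$ if and only if $\mathcal{B}_k\subseteq\mathcal{D}$ claimed in~(ii).
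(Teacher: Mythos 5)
Your overall plan is the intended one: the paper itself gives no proof of Theorem \ref{theo_min_prob_minus} and implicitly relies on exactly the sign-reversed transcription you describe. Most of your transcription is correct. Lemma \ref{mono}, multiplied by $-1$, gives the lower bound
\begin{align*}
\langle g,r(\nu)g\rangle\;\geq\;\int_\Omega 2\bigl((\mu_0-\mu)+\nu\bigr)\,\hat{\nabla}u_0^g:\hat{\nabla}u_0^g
+\bigl((\lambda_0-\lambda)+\tau\nu\bigr)(\nabla\cdot u_0^g)^2\,dx,
\end{align*}
the choices $a_{\max}=c^\mu$, $\tau=c^\lambda/c^\mu$ make the integrand non-negative for admissible $\nu$ (on pixels meeting but not contained in $\mathcal{D}$ it is non-negative rather than zero, a harmless slip), and the Weyl/Frobenius perturbation argument runs with reversed signs exactly as you describe, since both ${\bf R}(\hat{\nu})$ and ${\bf R}(\hat{\nu}+h^\nu\chi_k)$ are positive semi-definite by your Step 1, so that strictly decreasing eigenvalues do decrease the Frobenius norm.

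The genuine gap is in the converse direction of the pixel characterisation, and it stems from your claim that after the sign reversal "the correct tool is the basic estimate of Lemma \ref{mono} rather than Lemma \ref{mono_2}". The two lemmas do not trade places in the sense that one becomes superfluous; they swap roles. Assume $\beta_k>0$ and $\mathcal{B}_k\not\subseteq\mathcal{D}$. The (reversed) defining inequality of $\beta_k$ is now a \emph{lower} bound for the quadratic form of $\Lambda(\lambda,\mu)-\Lambda(\lambda_0,\mu_0)$, namely $\geq \int_{\mathcal{B}_k}2\beta_k\,\hat{\nabla}u_0^g:\hat{\nabla}u_0^g+\tau\beta_k(\nabla\cdot u_0^g)^2\,dx$, so a contradiction requires an \emph{upper} bound for that quadratic form in terms of integrals of the \emph{background} solutions $u_0^g$ over $\mathcal{D}$. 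Lemma \ref{mono} cannot supply this: after multiplication by $-1$, its estimate at $u_0^g$ is a lower bound, and the upper bound it yields is stated at $u^g_{(\lambda,\mu)}$ — to which the localized-potentials sequence of Theorem 2.1 in \cite{Eberle_mon_test}, which you propose to reuse ``verbatim'', does not apply, since it is constructed for the coefficients $(\lambda_0,\mu_0)$. What you need here is Lemma \ref{mono_2} with $\lambda_1=\lambda$, $\mu_1=\mu$, $\lambda_2=\lambda_0$, $\mu_2=\mu_0$, multiplied by $-1$, which in the present case $\lambda\leq\lambda_0$, $\mu\leq\mu_0$ gives
\begin{align*}
\langle g,(\Lambda(\lambda,\mu)-\Lambda(\lambda_0,\mu_0))g\rangle\;\leq\;
\int_{\mathcal{D}}2\,\frac{\mu_0}{\mu}(\mu_0-\mu)\,\hat{\nabla}u_0^g:\hat{\nabla}u_0^g
+\frac{\lambda_0}{\lambda}(\lambda_0-\lambda)\,(\nabla\cdot u_0^g)^2\,dx,
\end{align*}
whose integrand is essentially bounded on $\mathcal{D}$ because $\lambda,\mu\in L^\infty_+(\Omega)$ and $\mu_0-\mu\leq C^\mu$, $\lambda_0-\lambda\leq C^\lambda$ there. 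With this bound in place the localized-potentials contradiction goes through as in Lemma \ref{pixel_+}. So the correct bookkeeping is: Lemma \ref{mono} takes over the role Lemma \ref{mono_2} played in Step 1 and the forward direction (fixing $a_{\max}$ and $\tau$), while Lemma \ref{mono_2} takes over the role Lemma \ref{mono} played in the converse direction; your proof must be amended accordingly.
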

  \noindent
 The corresponding results for noisy data is formulated in the following theorem, 
 where ${\bf R}_\delta(\nu)$ represents the matrix $(\langle  g_i,r_\delta(\nu) g_j\rangle)_{i,j=1,\ldots,M}$ and the admissible set for noisy data is defined by
 \begin{align*}
 \mathcal{C}_\delta:=\bigg\lbrace \nu\in L^\infty_+(\Omega):
\nu=\sum_{k=1}^p a_k \chi_{k},\,a_k\in\mathbb{R},\,0\geq a_k\geq-\min(a_{\max},\beta_{k,\delta})\bigg\rbrace.
 \end{align*}
 
   \begin{theorem}\label{theo_conv_minus}
 Consider the minimization problem
 \begin{align}\label{min_prob_delta_minus}
\min_{\nu\in\mathcal{C}_\delta}\Vert {\bf R}_\delta(\nu)\Vert_F.
 \end{align}
 \noindent
 The following statements hold true:
 \begin{itemize}
 \item[(i)] Problem (\ref{min_prob_delta_minus}) admits a minimizer.
 \item[(ii)]  Let $\hat{\nu}=\sum\limits_{\mathcal{B}_k\subseteq\mathcal{D}}a_{\max}\chi_k$
 be the minimizer of (\ref{min_prob_minus}) and  
 $\hat{\nu}_\delta=\sum\limits_{k=1}^{p} a_{k,\delta}\chi_k$
 of problem (\ref{min_prob_delta_minus}), respectively. 
 Then $\hat{\nu}^\delta$ converges pointwise and uniformly to $\hat{\nu}$ as $\delta$ goes to $0$.
 \end{itemize}
 \end{theorem}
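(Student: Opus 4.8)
The plan is to mirror the proof of Theorem \ref{theo_conv_+} essentially line by line, carrying the sign reversal induced by the regime $\lambda\le\lambda_0$, $\mu\le\mu_0$ through every step. For part (i), I would first observe that the map $\nu\mapsto\Vert{\bf R}_\delta(\nu)\Vert_F^2$ is continuous, being a finite sum of squared linear functionals of the coefficients $(a_1,\ldots,a_p)$, and that $\mathcal{C}_\delta$ is a compact box in the finite-dimensional span of $\chi_1,\ldots,\chi_p$, now the reflected box $-\min(a_{\max},\beta_{k,\delta})\le a_k\le0$. Existence of a minimizer then follows at once from the Weierstrass theorem, exactly as in Theorem \ref{theo_conv_+}(i).

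For part (ii) I would proceed in the same four steps. Beforehand I need the analog of Lemma \ref{lem_delta_+}, namely that $\beta_k\le\beta_{k,\delta}$ for every pixel, where $\beta_{k,\delta}$ is now defined through the sign-reversed counterpart of (\ref{beta_delta}) attached to the regime $\lambda\le\lambda_0$, $\mu\le\mu_0$. Its proof again rests on the two facts $\vert V^\delta\vert\ge V^\delta$ and $V-V^\delta\ge-\delta I$ in the quadratic sense, which I would recombine with the sign-reversed lower bounds (\ref{cond_a_minus})--(\ref{cond_tau_minus}) to obtain the required one-sided estimate. With this in hand, Step~1 extracts a convergent subsequence $(\hat a_{1,\delta_n},\ldots,\hat a_{p,\delta_n})\to(a_1,\ldots,a_p)$ by boundedness of $\mathcal{C}_\delta$; Step~2 uses the operator-norm convergence $\vert V^{\delta_n}\vert\to\vert V\vert$ to pass to the limit in the defining inequality and conclude the admissibility bound $-\beta_k\le a_k\le0$; Step~3 combines the minimality of $\hat\nu_\delta$ with the membership $\hat\nu\in\mathcal{C}_\delta$, which is granted precisely by $\beta_k\le\beta_{k,\delta}$, to deduce $\Vert{\bf R}(\nu)\Vert_F\le\Vert{\bf R}(\hat\nu)\Vert_F$, so that the uniqueness asserted in Theorem \ref{theo_min_prob_minus}(i) forces the limit to equal $\hat\nu$; Step~4 upgrades subsequential convergence to convergence of the entire sequence, since every subsequence admits a further subsequence with the same limit $\hat\nu$.

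The hard part, as in the positive case, is Step~3, together with the two structural inputs it recycles. First, the positive-definiteness statement of Lemma \ref{pos_S} must be re-examined for the negative-regime matrix ${\bf S}_k^\tau$ built from $a_{\max}=c^\mu$ and $\tau=c^\lambda/c^\mu$; under the reflected admissible set this enters the Weyl-inequality argument with the opposite orientation, so I must verify that a perturbation of the form $h^\nu{\bf S}_k^\tau$ still shifts the relevant eigenvalues strictly in the direction that decreases $\Vert{\bf R}(\cdot)\Vert_F$. Second, Step~1 of Theorem \ref{theo_min_prob_+} has to be rerun with the bounds (\ref{cond_a_minus})--(\ref{cond_tau_minus}) and with Lemma \ref{mono} in place of Lemma \ref{mono_2} (indeed, in this regime $a_{\max}=c^\mu$ is obtained from the linear estimate of Lemma \ref{mono}, not from the quadratic estimate of Lemma \ref{mono_2}), so as to re-establish that $r(\nu)$ and $r_\delta(\nu)$ retain a definite sign on $\mathcal{C}$ and $\mathcal{C}_\delta$. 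Once that definiteness and the uniqueness from Theorem \ref{theo_min_prob_minus} are secured, the remainder is a verbatim transcription of the argument already carried out for Theorem \ref{theo_conv_+}.
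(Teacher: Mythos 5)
Your proposal is correct and matches the paper's intended argument: the paper states Theorem \ref{theo_conv_minus} in the Appendix without a written proof precisely because it is the sign-reversed transcription of Theorem \ref{theo_conv_+} (together with the analogues of Lemmas \ref{pixel_+}, \ref{pos_S} and \ref{lem_delta_+}) that you outline. In particular, you correctly identify the only two genuine modifications: in this regime the residual is non-negative and its sign-definiteness comes from the linear estimate of Lemma \ref{mono} rather than Lemma \ref{mono_2} (which is why $a_{\max}=c^\mu$, as the paper's Appendix remark notes), and the Weyl-inequality step runs with reversed orientation, subtracting $h^\nu {\bf S}_k^\tau$ from a positive semi-definite ${\bf R}(\hat{\nu})$ so that the Frobenius norm still strictly decreases.
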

 }

\bibliographystyle{plain}
\bibliography{references}

\begin{thebibliography}{10}

\bibitem{Andrieux}
S~Andrieux, AB~Abda, and HD~Bui.
\newblock Reciprocity principle and crack identification.
\newblock {\em Inverse Problems}, 15:59--65, 1999.

\bibitem{beretta2014lipschitz}
E~Beretta, E~Francini, A~Morassi, E~Rosset, and S~Vessella.
\newblock Lipschitz continuous dependence of piecewise constant {L}am{\'e}
  coefficients from boundary data: the case of non-flat interfaces.
\newblock {\em Inverse Problems}, 30(12):125005, 2014.

\bibitem{beretta2014uniqueness}
E~Beretta, E~Francini, and S~Vessella.
\newblock Uniqueness and {L}ipschitz stability for the identification of
  {L}am{\'e} parameters from boundary measurements.
\newblock {\em Inverse Problems \& Imaging}, 8(3):611--644, 2014.

\bibitem{Ciarlet}
PG~Ciarlet.
\newblock {\em The finite element method for elliptic problems}.
\newblock North Holland, 1978.

\bibitem{Eberle_mon_test}
S~Eberle and B~Harrach.
\newblock Shape reconstruction in linear elasticity: Standard and linearized
  monotonicity method.
\newblock {\em Inverse Problems}, 37(4):045006, 2021.

\bibitem{Eberle_Monotonicity}
S~Eberle, B~Harrach, H~Meftahi, and T~Rezgui.
\newblock Lipschitz stability estimate and reconstruction of {L}am\'e
  parameters in linear elasticity.
\newblock {\em Inverse Problems in Science and Engineering}, 29(3):396--417,
  2021.

\bibitem{Eberle_Experimental}
S~Eberle and J~Moll.
\newblock Experimental detection and shape reconstruction of inclusions in
  elastic bodies via a monotonicity method.
\newblock {\em Int J Solids Struct},
  https://doi.org/10.1016/j.ijsolstr.2021.111169, 2021.

\bibitem{eskin2002inverse}
G~Eskin and J~Ralston.
\newblock On the inverse boundary value problem for linear isotropic
  elasticity.
\newblock {\em Inverse Problems}, 18(3):907, 2002.

\bibitem{Ferrier}
R~Ferrier, ML~Kadri, and P~Gosselet.
\newblock Planar crack identification in 3{D} linear elasticity by the
  reciprocity gap method.
\newblock {\em Computer Methods in Applied Mechanics and Engineering},
  355:193--215, 2019.

\bibitem{Harrach_FEM}
B~Harrach.
\newblock An introduction to finite element methods for inverse coefficient
  problems in elliptic pdes.
\newblock {\em Jahresber. Dtsch. Math. Ver.}, 123:183--210, 2021.

\bibitem{harrach2016enhancing}
B~Harrach and NM~Mach.
\newblock Enhancing residual-based techniques with shape reconstruction
  features in electrical impedance tomography.
\newblock {\em Inverse Problems}, 32(12), 2016.

\bibitem{harrach2013monotonicity}
B~Harrach and M~Ullrich.
\newblock Monotonicity-based shape reconstruction in electrical impedance
  tomography.
\newblock {\em SIAM Journal on Mathematical Analysis}, 45(6):3382--3403, 2013.

\bibitem{Hubmer}
S~Hubmer, E~Sherina, A~Neubauer, and O~Scherzer.
\newblock Lam\'{e} parameter estimation from static displacement field
  measurements in the framework of nonlinear inverse problems.
\newblock {\em SIAM Journal on Imaging Sciences}, 11(2):1268--1293, 2018.

\bibitem{ikehata1990inversion}
M~Ikehata.
\newblock Inversion formulas for the linearized problem for an inverse boundary
  value problem in elastic prospection.
\newblock {\em SIAM Journal on Applied Mathematics}, 50(6):1635--1644, 1990.

\bibitem{imanuvilov2011reconstruction}
OY~Imanuvilov and M~Yamamoto.
\newblock On reconstruction of {L}am{\'e} coefficients from partial {C}auchy
  data.
\newblock {\em Journal of Inverse and Ill-posed Problems}, 19(6):881--891,
  2011.

\bibitem{Jadamba}
B~Jadamba, AA~Khan, and F~Raciti.
\newblock On the inverse problem of identifying {L}am\'{e} coefficients
  inlinear elasticity.
\newblock {\em Computers and Mathematics with Applications}, 56:431--443, 2008.

\bibitem{lin2017boundary}
YH~Lin and G~Nakamura.
\newblock Boundary determination of the {L}am{\'e} moduli for the isotropic
  elasticity system.
\newblock {\em Inverse Problems}, 33(12):125004, 2017.

\bibitem{Marin}
L~Marin and D~Lesnic.
\newblock Regularized boundary element solution for an inverse boundary value
  problem in linear elasticity.
\newblock {\em Communications in Numerical Methods in Engineering},
  18:817--825, 2002.

\bibitem{Marin_2005}
L~Marin and D~Lesnic.
\newblock Boundary element-{L}andweber method for the {C}auchy problem in
  linear elasticity.
\newblock {\em IMA Journal of Applied Mathematics}, 70(2):323--340, 2005.

\bibitem{nakamura1999layer}
G~Nakamura, K~Tanuma, and G~Uhlmann.
\newblock Layer stripping for a transversely isotropic elastic medium.
\newblock {\em SIAM Journal on Applied Mathematics}, 59(5):1879--1891, 1999.

\bibitem{nakamura1993identification}
G~Nakamura and G~Uhlmann.
\newblock Identification of {L}am{\'e} parameters by boundary measurements.
\newblock {\em American Journal of Mathematics}, pages 1161--1187, 1993.

\bibitem{nakamura1995inverse}
G~Nakamura and G~Uhlmann.
\newblock Inverse problems at the boundary for an elastic medium.
\newblock {\em SIAM journal on mathematical analysis}, 26(2):263--279, 1995.

\bibitem{nakamura2003global}
G~Nakamura and G~Uhlmann.
\newblock Global uniqueness for an inverse boundary value problem arising in
  elasticity.
\newblock {\em Inventiones mathematicae}, 152(1):205--207, 2003.

\bibitem{Oberai_2004}
AA~Oberai, NH~Gokhale, MM~Doyley, and JC~Bamber.
\newblock Evaluation of the adjoint equation based algorithm for elasticity
  imaging.
\newblock {\em Physics in Medicine and Biology}, 49(13):2955--2974, 2004.

\bibitem{Oberai_2003}
AA~Oberai, NH~Gokhale, and GR~Feijoo.
\newblock Solution of inverse problems in elasticity imaging using the adjoint
  method.
\newblock {\em Inverse Problems}, 19:297--313, 2003.

\bibitem{Seidl}
DT~Seidl, AA~Oberai, and PE~Barbone.
\newblock The coupled adjoint-state equation in forward and inverse linear
  elasticity: Incompressible plane stress.
\newblock {\em Computer Methods in Applied Mechanics and Engineering},
  357:112588, 2019.

\bibitem{Steinhorst}
P~Steinhorst and AM~S\"{a}ndig.
\newblock Reciprocity principle for the detection of planar cracks in
  anisotropic elastic material.
\newblock {\em Inverse Problems}, 29:085010, 2012.

\bibitem{Tamburrino06}
A~Tamburrino.
\newblock Monotonicity based imaging methods for elliptic and parabolic inverse
  problems.
\newblock {\em J. Inverse Ill-Posed Probl.}, 14(6):633--642, 2006.

\bibitem{tamburrino2002new}
A~Tamburrino and G~Rubinacci.
\newblock A new non-iterative inversion method for electrical resistance
  tomography.
\newblock {\em Inverse Problems}, 18(6):1809, 2002.

\end{thebibliography}
\end{document}